\documentclass[11pt,reqno]{amsart}

\usepackage[usenames,dvipsnames]{xcolor}
\RequirePackage[OT1]{fontenc}

\usepackage[colorlinks,citecolor=blue,urlcolor=blue,linkcolor=blue,linktocpage=true]{hyperref}

\usepackage[]{amsmath}
\usepackage{amssymb,amsthm,amsfonts,amsbsy,latexsym,amsxtra}

\usepackage{graphicx}

\usepackage{todonotes}
\usepackage{appendix}
\usepackage{url}

\topmargin=0in
\oddsidemargin=0in
\evensidemargin=0in
\textwidth=6.5in
\textheight=8.5in

\allowdisplaybreaks[3]

\usepackage[usenames,dvipsnames]{xcolor}            
\usepackage{graphicx}
\usepackage{bbm,bm}
\usepackage{comment}
\usepackage{mathtools}
\usepackage{placeins}
\usepackage[labelfont={sl}]{caption}
\usepackage[labelfont={normalfont}]{subcaption}

\hyphenpenalty=100
\binoppenalty=500
\relpenalty=500
\predisplaypenalty=50

\usepackage{amssymb,amsfonts,amsmath,amsthm,amscd,dsfont,mathrsfs}

\usepackage{url}

\usepackage{enumerate}

\numberwithin{equation}{section}

\newtheorem{theorem}{Theorem}[section]

\newtheorem{lemma}[theorem]{Lemma}

\theoremstyle{definition}

\newtheorem{example}[theorem]{Example}

\newcommand{\E}{\mathbf{E}}
\newcommand{\R}{\mathbb{R}}

\newcommand{\M}{\mu}
\newcommand{\sP}{\mathcal{P}}
\newcommand{\N}{\mathbb{N}}

\newcommand{\eps}{\varepsilon}
\renewcommand{\P}{\mathbf{P}}
\newcommand{\Prob}[1]{\mathbf P\left\{#1\right\}}

\renewcommand{\emptyset}{\varnothing}
\newcommand{\XX}{\mathbb{X}}
\newcommand{\ZZ}{\mathbb{Z}}
\newcommand{\QQ}{\mathbb{Q}}
\newcommand{\F}{\mathcal{F}}
\newcommand{\Nb}{\mathbf{N}}
\newcommand{\e}{\varepsilon}

\newcommand{\diff}{{\,\mathrm d}}
\newcommand{\ldiff}{{\mathrm d}}

\renewcommand{\preceq}{\prec}
\renewcommand{\succeq}{\succ}

\DeclareMathOperator{\Var}{Var}

\allowdisplaybreaks[3]

\usepackage{color}
   % Chinmoy's 
  %Chinmoy's margin comments

%\renewcommand{\Red}{}

%\usepackage[numbers, sort&compress]{natbib} 
%\newcommand{\omit}[1]{}

\newtheorem*{theorem*}{Theorem}

\begin{document}

\title[Region-Stabilizing Scores]{Gaussian Approximation for Sums
  of Region-Stabilizing Scores}

\author[C. Bhattacharjee]{Chinmoy Bhattacharjee}
\address{Department of Mathematics, University Luxembourg, Luxembourg}
\email{chinmoy.bhattacharjee@uni.lu}

\author[I. Molchanov]{Ilya Molchanov}
\address{Institut f\"ur Mathematische Statistik und Versicherungslehre, University of Bern, Switzerland}
\email{ilya.molchanov@stat.unibe.ch}

\date{\today}

\thanks{IM was supported by the Swiss National Science Foundation Grant
  No.\ 200021\_175584}

\subjclass[2010]{Primary: 60F05, Secondary: 60D05, 60G55}
\keywords{Stein's method, stabilization, minimal points, Poisson
  process, central limit theorem.}

\begin{abstract}
  We consider the Gaussian approximation for functionals of a Poisson
  process that are expressible as sums of region-stabilizing
  (determined by the points of the process within some specified
  regions) score functions and provide a bound on the rate of
  convergence in the Wasserstein and the Kolmogorov
  distances. While such results have previously been shown in
  Lachi\`eze-Rey, Schulte and Yukich (2019), we extend the applicability by relaxing some
  conditions assumed there and provide further insight into the
  results. This is achieved by working with stabilization regions that
  may differ from balls of random radii commonly used in the
  literature concerning stabilizing functionals. We also allow for
  non-diffuse intensity measures and unbounded scores, which are
  useful in some applications. As our main application, we consider
  the Gaussian approximation of number of minimal points in a
  homogeneous Poisson process in $[0,1]^d$ with $d \ge 2$, and provide a presumably
  optimal rate of convergence.
\end{abstract}

\maketitle

\section{Introduction}

Let $(\XX,\F)$ be a Borel space and let $\QQ$ be a $\sigma$-finite
% diffuse 
measure on $(\XX, \F)$.  For $s \ge 1$, let $\sP_s$ denote a Poisson
process with intensity measure $s\QQ$.  Our main object of study is
the sum of score functions $(\xi_s)_{s \ge 1}$ given by
\begin{equation}
	\label{eq:hs}
	H_s=H_s(\sP_s):= \sum_{x \in \sP_s} \xi_s(x,\sP_s), \quad s \ge 1,
\end{equation}
when the sum converges.
While $H_s$ is a functional of the whole point process, this
representation implicitly assumes that the functional can be
decomposed as a sum of local contributions at each point
$x \in \sP_s$.  Indeed, in the vast literature on limit theorems for
sums of score functions over points in a Poisson process (see, e.g.,
\cite{pen:yuk03,pen:yuk05,sch10}), it is usually assumed that the
score function at a point $x$ depends on the whole point process only
through the set of its points within some small (random) distance to
$x$, prohibiting any long-range interactions.  Conditions like
exponential decay of the tail distribution of this distance, so-called
`radius of stabilization', and bounds on certain moments of the score
functions are crucial to derive a quantitative central limit
theorem. The idea of using \textit{stabilization} for studying limit
theorems started with the works \cite{PY01,pen:yuk03}. Subsequently,
important further works advanced such quantitative results for the
Gaussian approximation of stabilizing functionals, see, e.g.,
\cite{BX06,pen:yuk05,Yu15}. But all these results provided bounds that
had an extraneous logarithmic factor multiplied to the inverse of the
square root of the variance.  The results in this area culminated in
\cite{LPS16}, where, using Malliavin-Stein approach, this logarithmic
factor was removed, and further in \cite{LSY19}, providing presumably
optimal rates and ready-to-use conditions illustrated with numerous
applications.

The comparative simplicity of the bounds provided in \cite{LSY19}
comes at the cost of assuming a few conditions on the underlying space
and the score functions. Even though these conditions are satisfied in
many important examples as demonstrated therein, they are not
applicable in some cases, especially, in examples exhibiting
long-range interactions. A notable example is the number of minimal
(or Pareto optimal) points in $\sP_s$ restricted to the unit cube
$[0,1]^d$, $d \ge 2$. This example violates all existing stabilization
conditions usually assumed in the context of quantitative limit
theorems. In particular, the appearance of stabilization regions that can be
arbitrarily thin and long makes the radius of stabilization too large
to obtain a meaningful bound using results from \cite{LSY19}. As a
result, \cite{LSY19} could
only manage to handle (in the problem of counting maximal points, which is equidistributed as the number of minimal points) a modified setting, by replacing the cube with a
domain of the form $\{x \in [0,\infty)^d : F(x) \le 1\}$, where
$F : [0,\infty)^d \to [0,\infty)$ is strictly increasing in each
coordinate with $F(0)<1$, is continuously differentiable, and has
continuous partial derivatives that are bounded away from zero and
infinity. Even though one can define a function $F$ to obtain a domain
that is arbitrarily close to the cube, the behavior of the number of
maximal points is very sensitive to small changes in the shape of the
domain: while the variance of $H_s$ is of the order of $s^{(d-1)/d}$
in the setting of \cite{LSY19}, its order becomes
$\log^{d-1} s$ in the case of the cube, see \cite{bai:dev:hwan:05}.

The main aim of this paper is to develop a more versatile notion of
stabilization that enables us to handle various examples with long-range interactions, most notably
the example of minimal points in the cube. We achieve this by
generalizing the concept of stabilization radius to allow for regions
of arbitrary shape, that is, by replacing balls of random radii with
general sets, called stabilization regions. It is unlikely to achieve
this by amending the metric on the carrier space, since the shape of
these stabilization regions may be random and depend heavily on the
reference point, and also since the stabilization region may be
empty. The only additional condition we assume is that the
stabilization region is monotonically decreasing in the point
configuration, which is a natural condition satisfied by all common
examples.

In addition, we also extend the results to non-diffuse intensity
measures and to score functions with non-uniform bounds on their
moments. The extension to non-diffuse intensity measures results from
getting rid of some regularity assumption on $\QQ$ imposed in
\cite{LSY19}. This makes it possible to handle examples with multiple
points at deterministic locations, like Poisson processes on
lattices. The extension to scores with unbounded moments is crucial in
examples where the score functions are not simple indicators but
rather involve unbounded weight functions, or when the intensity
measure is infinite. %, \Red{not involving a kind of surface area order
	%scaling, see \cite[Eq.~(2.8)]{LSY19}} \ccomm{do we mean eq. 2.1? Also, do we have any examples where this is not satisfied? Of course, non-diffuse, but not here}. 
Such an extension is a byproduct
of our generalization of \cite[Theorem~6.1]{LPS16}, which involves
non-uniform bounds on the $(4+p)$-th moment of the first order difference operator for some $p >0$, see
Theorem~\ref{thm:Main}. We present two examples concerning isolated
points in the two-dimensional integer lattice and a random geometric
graph in $\R^d$, $d \ge 2$, to demonstrate further applications of our
general bounds. Apart from the fact that our approach is more
versatile than that of \cite{LSY19}, to the best of our knowledge,
working with general monotonically decreasing stabilization sets is
new in the relevant literature and thus our work opens a new direction
of investigation. It should be noted that the very comprehensive
setting in \cite{LSY19} also covers the cases of Poisson processes
with marks, as well as the setting of binomial processes.  Our
results can be extended to these settings by adapting the scheme
elaborated in \cite{LSY19} to our approach relying on stabilization
regions. Indeed, Theorem~4.2 in \cite{LSY19} providing a bound on
Gaussian approximation for functionals of a binomial process can be
modified to the setting with a non-uniformly bounded $(4+p)$-th moment
of the difference operator in the same way we modify Theorem~6.1 in
\cite{LPS16} in our Theorem~\ref{thm:Main}. Once this key step is
achieved, one can follow our line of argument to obtain a result
paralleling our Theorem~\ref{thm:KolBd} for binomial processes.

Let us now explicitly describe our setup. For a Borel space
$(\XX,\F)$, denote by $\Nb$ the family of $\sigma$-finite counting
measures $\M$ on $\XX$ equipped with the smallest $\sigma$-algebra
$\mathscr{N}$ such that the maps $\M \mapsto \M(A)$ are measurable for
all $A \in \F$. We write $x\in\M$ if $\M(\{x\})\geq1$.
Denote by $0$ the zero counting measure. Further, $\M_A$ denotes the
restriction of $\M$ onto the set $A\in\F$, and $\delta_x$ is the Dirac
measure at $x\in\XX$. For $\M_1,\M_2\in\Nb$, we write $\M_1\leq\M_2$
if the difference $\M_2-\M_1$ is non-negative.

For each $s\geq 1$, a \emph{score function} $\xi_s$ associates to each
pair $(x,\M)$ with $x\in\XX$ and $\M\in\Nb$, a real number
$\xi_s(x,\M)$. Throughout, we assume that the function
$\xi_s: \XX \times \Nb \to \R$ is measurable with respect to the
product $\sigma$-algebra $\F\otimes\mathscr{N}$ for all $s\geq1$.

With $H_s$ as in \eqref{eq:hs}, our aim is to find an upper bound on
the distance between the distributions of the normalized sum of scores
$(H_s-\E H_s)/\sqrt{{\rm Var}\,H_s}$ and a standard normal random
variable $N$ in an appropriate distance. We consider two very commonly
used distances, namely, the Wasserstein and the Kolmogorov
distances. The Wasserstein distance between (the distributions of) real-valued random
variables $X$ and $Y$ is given by
$$
d_W(X,Y):= \sup_{h \in \operatorname{Lip}_1} |\E\; h(X) - \E \; h(Y)|,
$$
where $\operatorname{Lip}_1$ denotes the class of all Lipschitz
functions $h: \R \to \R$ with Lipschitz constant at most one.  The
Kolmogorov distance between $X$ and $Y$ is defined by taking the test
functions to be indicators of half-lines, and is given by
$$
d_K(X,Y):= \sup_{t \in \R} |\Prob{X \le t} - \Prob{Y \le t}|.
$$

Following \cite{LSY19}, a score function stabilizes if $\xi_s(x,\M)$
remains unaffected when the configuration $\M$ is altered outside a
ball of radius $r_x=r_x(\mu)$ (the radius of stabilization) centered at
$x$. For this, it is assumed that $\XX$ is a semimetric space and
$\QQ$ satisfies a technical condition concerning the $\QQ$-content of
an annulus in the space $\XX$, which in particular implies that $\QQ$
is diffuse.  In \cite{LSY19}, under an exponential decay condition on
the tail distribution of the stabilization radius $r_x$ as
$s \to \infty$ and assuming that the $(4+p)$-th moment of the score
function at $x$ is uniformly bounded by a constant for all $s\geq1$
and $x\in\XX$ for some $p \in (0,1]$, a universal bound on the
Wasserstein and Kolmogorov distances between the normalized sum of
scores and $N$ was derived.

The setting of stabilization regions as balls centered at $x\in\sP_s$
with radius $r_x$ can be thought of as a special case of a more general
concept of stabilization regions which are sets depending on $x$ and
the Poisson process.  Indeed, in some examples, it is not optimal to
assume that the stabilization region is a ball. The region can be made
substantially smaller if it is allowed to be of a general
shape. Adjusting the theory to deal with such stabilization regions is
the main contribution of our work. Our general setting of non-spherical
stabilization regions also eliminates the need of extra technical
assumptions on the intensity measure imposed in \cite{LSY19}. As an
illustration, we show how to handle the example of minimal points in
the unit cube, which does not fit into the framework of \cite{LSY19}.
We also allow for multiple points and for a non-uniform bound on the
$(4+p)$-th moment of the score functions, which is particularly
important in examples involving infinite intensity measures, like
stationary Poisson processes. Apart from examples presented in the
current paper, further applications of our method has been elaborated
in \cite{bhat-mol21}, where a quantitative central limit theorem is obtained for
functionals of growth processes that result in generalized Johnson-Mehl tessellations, and in \cite{Bha21},
where such a result is obtained in the context of minimal directed
spanning trees in dimensions three and higher, respectively.

\section{Notation and main results}
\label{sec:notat-main-results}

Throughout the paper, for $s \ge 1$, we consider a
$\F\otimes\mathscr{N}$-measurable score function $\xi_s(x,\M)$. Assume
that if $\xi_s(x, \M_1)=\xi_s(x, \M_2)$ for some $\M_1,\M_2 \in \Nb$
with $0\neq \M_1\leq \M_2$, then
\begin{equation}
	\label{eq:ximon}
	\xi_s(x, \M_1)=\xi_s(x, \M') \quad 
	\text{for all} \, \M'\in\Nb \; \text{ with } \; \M_1\leq \M'\leq \M_2.
\end{equation}
% Throughout the paper, we assume that if $\xi_s(x, \M_1)=\xi_s(x, \M_1
% \cap \M_2)$ for some $\M_1,\M_2 \in \Nb$ and $x \in \M_1 \cap \M_2$,
% then
% \begin{equation}
	%   \label{eq:ximon}
	%   \xi_s(x, \M_1)=\xi_s(x, \M_1 \cap \mathcal{V}) \quad 
	%   \text{for all} \, \mathcal{V} \supseteq \M_2.
	% \end{equation}
This is a natural condition to expect for any reasonably well-behaved
score function.  We will need a few more assumptions on the score
functions. The first assumption is a generalization of the concept of
stabilization radius.
\begin{enumerate}
	\item[(A1)] \textit{Stabilization region:} For all $s \ge 1$,
	there exists a map $R_s$ from $\{(x,\M)\in\XX\times \Nb:x\in\M\}$
	to $\F$ such that
	\begin{enumerate}[(1)]
		\item[(A1.1)] the set
		\begin{displaymath}
			\{(x,y_1, y_2,\mu): \{y_1,y_2\}\subseteq  R_s(x, \mu+\delta_x)\}
		\end{displaymath}
		is measurable with respect to the product $\sigma$-algebra on
		$\XX^3\times\Nb$,
		% $$
		% %	\{(x,y,\mu)\in\XX^2\times \Nb: y\in R_s(x, \M+\delta_x)\}
		% %	\in \F\otimes\F\otimes\mathscr{N},
		% \{\mu \in \Nb : y\in R_s(x, \M+\delta_x)\} \in \mathscr{N}
		% \quad \text{for all $x,y \in \XX$,}
		% $$
		% and
		% $$
		% \Prob{y\in R_s(x, \sP_s+\delta_x)}\, \text{ and } \,
		% \Prob{\{y_1, y_2\}\subseteq  R_s(x, \sP_s+\delta_x)}
		% $$
		% are measurable functions of $(x,y) \in \XX^2$ and
		% $(x,y_1,y_2) \in \XX^3$, respectively,
		% \item $$
		%   \label{eq:mR}
		%   %	\{(x,y,\mu)\in\XX^2\times \Nb: y\in R_s(x, \M+\delta_x)\}
		%   %	\in \F\otimes\F\otimes\mathscr{N},
		%   \{\mu \in \Nb : y\in R_s(x, \M+\delta_x)\} \in \mathscr{N}
		%   \quad \text{for all $x,y \in \XX$,}
		%   $$
		%   and
		%   $$
		%   \Prob{y\in R_s(x, \sP_s+\delta_x)}\, \text{ and } \,
		%   \Prob{\{y_1, y_2\}\subseteq  R_s(x, \sP_s+\delta_x)}
		%   $$
		%   are measurable functions of $(x,y) \in \XX^2$ and
		%   $(x,y_1,y_2) \in \XX^3$ respectively,
		\item[(A1.2)] the map $R_s$ is monotonically decreasing in the second argument,
		i.e.\
		$$
		R_s(x,\M_1) \supseteq R_s(x,\M_2),
		\quad \M_1 \leq \M_2,\; x\in\M_1,
		$$ 
		\item[(A1.3)] for all $\mu\in\Nb$ and $x\in\mu$, $\mu_{R_s(x,\mu)} \neq 0$
		implies $(\mu+\delta_y)_{R_s(x,\mu +\delta_y)} \neq 0$ for all
		$y \notin R_s(x,\mu)$,  
		\item[(A1.4)] for all $\M\in\Nb$ and $x\in\M$,
		\begin{displaymath}
			\xi_{s}\big(x,\M\big)
			=\xi_{s}\big(x,\M_{R_{s}(x,\M)}\big).
		\end{displaymath}
	\end{enumerate}
\end{enumerate}

By taking the intersection of the set from (A1.1) with the set
$\{(x,y,y,\mu):\mu\in\Nb\} \subseteq \XX^3 \times \Nb$ (which is also measurable) and then
applying the bijective projection on $\Nb$ we see that
\begin{equation}
	\label{eq:1}
	\{\mu \in \Nb : y\in R_s(x,\mu+\delta_x)\}\in\mathscr{N}
\end{equation}
for all $(x,y)\in\XX^2$. Furthermore, Fubini's theorem implies that
\begin{equation}
	\label{eq:2}
	\Prob{y\in R_s(x, \sP_s+\delta_x)}\, \text{ and } \,
	\Prob{\{y_1, y_2\}\subseteq  R_s(x, \sP_s+\delta_x)}
\end{equation}
are Lebesgue measurable functions of $(x,y) \in \XX^2$ and
$(x,y_1,y_2) \in \XX^3$, respectively. Even though, assumption (A1.1)
is sufficient for our result, it is indeed enough to assume
\eqref{eq:1} and \eqref{eq:2}. Thus, when simpler, we will verify the
conditions \eqref{eq:1} and \eqref{eq:2} instead of (A1.1).

Note that (A1) holds trivially if one takes $R_s$ to be identically
equal to the whole space $\XX$. If (A1) holds with a non-trivial
$R_s$, then the score function is called
\emph{region-stabilizing}. Also note that a condition like
\cite[Eq.~(2.3)]{LSY19}, requiring stabilization with 7 additional
points, trivially holds in our set up due to the monotonicity
assumption (A1.2) and \eqref{eq:ximon}.

% The stabilization region signifies the set
% $R_s(x,\M+\delta_x)$ such that adding points to the support of $\M$
% outside this set does not affect the value of the score function
% $\xi_{s}(x,\M+\delta_x)$. 
% \Blue{If $\XX$ is a topological space and the values of $R_s$ are
	% closed sets, then the measurability condition is satisfied if
	% $R_s(x, \eta \cup \{x\})$ is a random closed set for a point process
	% $\eta$.}

We also assume the standard $(4+p)$-th moment condition, stated here
in terms of the norm for notational simplicity. In the following,
$\|\cdot\|_{4+p}$ denotes the $L^{4+p}$-norm.
\begin{enumerate}
	\item[(A2)] \textit{$L^{4+p}$-norm:} There exists a $p \in (0,1]$
	such that, for all $\mu\in\Nb$ with $\mu(\XX) \le 7$,
	\begin{displaymath}
		\Big\|\xi_{s}\big(x, \sP_{s}+\delta_x+\mu\big)\Big\|_{4+p}
		\leq M_{s,p}(x), \quad s\geq1,\; x \in \XX,
	\end{displaymath}
	where $M_{s,p} : \XX \to \R$, $s\geq1$, are measurable
	functions.
\end{enumerate}

If the score function is an indicator random variable, Condition (A2)
is trivially satisfied with $M_{s,p}\equiv 1$ for any $p \in (0,1]$
and $s\geq1$. For notational convenience, in the sequel we will write $M_s$ instead of $M_{s,p}$, and generally drop $p$ from all subscripts.

Let $r_{s}: \XX \times \XX \to [0,\infty]$ be a measurable function
such that
\begin{equation}
	\label{eq:Rs}
	\Prob{y \in R_s(x, \sP_s +\delta_x)}
	\le  e^{-r_{s}(x,y)}, \quad x, y \in \XX.
\end{equation}
For the following it is essential that $r_s$ does not vanish, and then
\eqref{eq:Rs} becomes an analog of the usual exponential stabilization
condition from \cite{LSY19}. Note that we allow $r_s$ to be infinite and the probability in
\eqref{eq:Rs} is well defined due to assumption (A1.1). 

For
$x_1,x_2 \in \XX$, denote
\begin{equation}
	\label{eq:g2s}
	q_{s}(x_1,x_2):=s \int_\XX \P\Big\{\{x_1,x_2\} 
	\subseteq R_s\big(z, \sP_s +\delta_z\big)\Big\} \;\QQ(\ldiff z),
\end{equation}
noticing that the probability in the integral is well defined and
$q_s$ is measurable due to Fubini's theorem and \eqref{eq:2}.

For $p \in (0,1]$ as in (A2) and $\zeta:=p/(40+10p)$, let
\begin{gather}
	\label{eq:g}
	g_{s}(y) :=s \int_{\XX} e^{-\zeta r_{s}(x, y)} \;\QQ(\ldiff x), \quad h_s(y):=s \int_{\XX} M_{s}(x)^{4+p/2}e^{-\zeta r_{s}(x, y)}\;\QQ(\ldiff x),\\ 
	\label{eq:g5}
	G_s(y) := \widetilde{M}_{s}(y) + \tilde h_s(y) \big(1+g_{s}(y)^4\big), \quad y\in\XX,
\end{gather}
where for $y \in \XX$,
$$
\widetilde M_s(y):=\max\{M_s(y)^2,M_s(y)^4\} \quad \text{and} \quad \tilde h_s(y):=\max\{h_s(y)^{2/(4+p/2)}, h_s(y)^{4/(4+p/2)}\}.$$ 
For
$\alpha>0$, let
\begin{equation}
	\label{eq:fa}
	f_\alpha(y):=f_\alpha^{(1)}(y)+f_\alpha^{(2)}(y)+f_\alpha^{(3)}(y),
	\quad y\in\XX,
\end{equation}
where %for $y \in \XX$,
\begin{align}
	\label{eq:fal}
	f_\alpha^{(1)}(y)&:=s \int_\XX G_s(x) e^{- \alpha r_{s}(x,y)}
	\;\QQ(\ldiff x), \notag \\
	f_\alpha^{(2)} (y)&:=s \int_\XX G_s(x) e^{- \alpha r_{s}(y,x)} 
	\;\QQ(\ldiff x), \nonumber\\
	f_\alpha^{(3)}(y)&:=s \int_{\XX} G_s(x) q_{s}(x,y)^\alpha \;\QQ(\ldiff x).
\end{align}
Finally, define the function
\begin{equation}
	\label{eq:p}
	\kappa_s(x):= \Prob{\xi_{s}(x, \sP_{s}+\delta_x) \neq 0},\quad
	x\in\XX. 
\end{equation}

Our main result is the following abstract theorem, which generalizes
Theorem~2.1(a) in \cite{LSY19}. %, see also Theorem~\ref{thm:Schulte} below. 
For an integrable function $f : \XX \to \R$, denote
$\QQ f:=\int_\XX f(x) \QQ(\ldiff x)$. 
%Further, $N$ stands for a standard Gaussian random variable.

\begin{theorem}\label{thm:KolBd}
	Assume that $(\xi_s)_{s \ge 1}$ satisfy conditions (A1), (A2) and
	let $H_s$ be as in \eqref{eq:hs}. Then, for $p$ as in (A2) and
	$\beta:=p /(32+4 p)$, 
	\begin{align*}
		d_{W}\left(\frac{H_s-\E H_s}{\sqrt{\Var H_s}},  N\right) 
		&\leq C \Bigg[\frac{\sqrt{s  \QQ f_\beta^2}}{\Var H_s}
		+\frac{ s\QQ ((\kappa_s+g_{s})^{2\beta}G_s)}{(\Var H_s)^{3/2}}\Bigg],
	\end{align*}
	and
	\begin{align*}
		d_{K}\left(\frac{H_s-\E H_s}{\sqrt{\Var H_s}},
		N\right) 
		&\leq C \Bigg[\frac{\sqrt{s  \QQ f_\beta^2}
			+ \sqrt{s  \QQ f_{2\beta}}}{\Var H_s}
		+\frac{\sqrt{ s\QQ ((\kappa_s+g_{s})^{2\beta}G_s)}}{\Var H_s}
		+\frac{ s\QQ ((\kappa_s+g_{s})^{2\beta} G_s)}{(\Var H_s)^{3/2}}\\
		&\qquad\qquad +\frac{( s\QQ ((\kappa_s+g_{s})^{2\beta} G_s))^{5/4}
			+ ( s\QQ ((\kappa_s+g_{s})^{2\beta} G_s))^{3/2}}{(\Var H_s)^{2}}\Bigg]
	\end{align*}
	for all $s\geq1$, where $N$ is a standard normal random variable and
	$C \in (0,\infty)$ is a constant depending only on $p$.
\end{theorem}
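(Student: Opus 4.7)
The plan is to invoke an abstract second-order Poincaré-type bound for functionals of a Poisson process (the generalization of \cite[Theorem~6.1]{LPS16} referenced as Theorem~\ref{thm:Main} above) and then control each of the ingredient expectations using the region-stabilization hypotheses (A1)--(A3). The abstract bound will express $d_W$ and $d_K$ between $(H_s - \E H_s)/\sqrt{\Var H_s}$ and $N$ in terms of integrals of moments of the first- and second-order add-one cost operators
\[
D_x H_s = H_s(\sP_s + \delta_x) - H_s(\sP_s), \qquad D^2_{x_1,x_2} H_s = D_{x_1}D_{x_2} H_s,
\]
together with a term governed by $\kappa_s$ that captures how often scores are nonzero. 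The goal is to reduce each of those moment integrals to the quantities $\QQ f_\beta^2$, $\QQ f_{2\beta}$, and $\QQ((\kappa_s + g_s)^{2\beta} G_s)$ that appear in the statement.

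\textbf{Step one: unpacking the difference operators.} Writing out $H_s$ as in \eqref{eq:hs}, one has
\[
D_x H_s = \xi_s(x, \sP_s + \delta_x) + \sum_{y \in \sP_s} \bigl[\xi_s(y, \sP_s + \delta_x) - \xi_s(y, \sP_s)\bigr].
\]
By (A1) and the monotonicity condition \eqref{eq:ximon}, a summand on the right is nonzero only when $x \in R_s(y, \sP_s + \delta_x)$, and the latter is in turn contained in $R_s(y, \sP_s + \delta_x + \delta_y)$ by monotonicity of $R_s$. The analogous, but more delicate, identity for $D^2_{x_1,x_2} H_s$ forces both $x_1$ and $x_2$ to lie in the stabilization regions of each relevant point, which is precisely the event whose probability defines $q_s(x_1,x_2)$. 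The key point is that the at-most-seven extra points that show up in the iterated difference operators are absorbed by the ``$\mu(\XX)\le 7$'' slack in (A2); this is the reason (A2) is stated with a $+\mu$ buffer.

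\textbf{Step two: moment estimates.} For each fixed configuration slice I would bound $|D_x H_s|^q$ and $|D^2_{x_1,x_2} H_s|^q$ (for $q$ just below $4+p$) by applying Minkowski's inequality followed by H\"older's inequality to split the sum over ``relevant'' points $y$ into (i) an $L^{4+p}$-norm of the individual increment controlled by $M_{s,p}$ via (A2), and (ii) a counting factor for the number of $y$ whose stabilization region contains $x$. Combining Mecke's formula, the probability bound \eqref{eq:Rs}, and H\"older, the counting factor is estimated by powers of $g_s(x) = s\int e^{-\zeta r_s(x,y)} \QQ(\ldiff y)$; the choice of exponent $\zeta = p/(40+10p)$ is exactly what is needed so that after H\"older the surviving exponential still decays at rate $\zeta$ in $y$. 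Pairing $M_{s,p}^2$ (or $M_{s,p}^4$) with a fifth power of $g_s$ produces the function $G_s$ in \eqref{eq:g5}. The indicator $\{\xi_s(x,\sP_s+\delta_x) \neq 0\}$ in the diagonal terms gives rise to the factor $\kappa_s$ from \eqref{eq:p}.

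\textbf{Step three: assembling the bound.} Integrating the moment estimates against $\QQ$ (after another application of Mecke) converts the counting exponents $e^{-r_s(x,\cdot)}$ and $e^{-r_s(\cdot,x)}$ and the two-point probabilities $q_s$ into the three summands $f_\beta^{(1)}, f_\beta^{(2)}, f_\beta^{(3)}$ of $f_\beta$ in \eqref{eq:fa}; the exponent $\beta = p/(32+4p)$ arises from another round of H\"older applied when taking fourth moments of $D^2 H_s$. Substituting into the abstract Wasserstein bound yields the first displayed inequality. For the Kolmogorov bound, the abstract inequality has two additional summands of smaller order in the moments of the difference operators, which are handled identically and produce the extra $\sqrt{s\QQ f_{2\beta}}$ term as well as the $5/4$- and $3/2$-power terms on the right-hand side.

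\textbf{Main obstacle.} The real work is the bookkeeping in the second step: tracking how each application of H\"older's inequality degrades the exponential decay rate in $r_s$, ensuring that after separating $L^{4+p}$ moments of $\xi_s$ from the counting of relevant $y$'s one still has enough integrability to obtain the specific exponents $\zeta$ and $\beta$ stated in the theorem, and verifying that the up-to-seven extra points appearing in iterated difference operators are always covered by condition (A2). Once this is correctly executed, integration against $\QQ$ and appeal to Theorem~\ref{thm:Main} mechanically yields the stated Wasserstein and Kolmogorov bounds.
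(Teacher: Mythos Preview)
Your strategy is the paper's: apply Theorem~\ref{thm:Main} with $q=p/2$, feed it the moment bound $\E|D_yH_s(\sP_s+\mu)|^{4+p/2}\le C_p M_{s,p}(y)^{4+p/2}(1+g_s(y)^5)$, observe that $\max\{c_y^{2/(4+q)},c_y^{4/(4+q)}\}$ is bounded by a multiple of $G_s(y)$, and then control the probabilities $\P\{D^2_{x_1,x_2}H_s\neq0\}^\beta$ and $\P\{D_xH_s\neq0\}^{2\beta}$ by $f_\beta$ and $(\kappa_s+g_s)^{2\beta}$ respectively. Two points deserve correction.

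First, your use of monotonicity in Step~one is stated in the wrong direction. The stabilization region \emph{shrinks} as the configuration grows, so the relevant implication after Mecke is
\[
\{D_y\xi_s(x_j,\sP_s+\delta_{x_1}+\cdots+\delta_{x_i})\neq0\}
\subseteq \{y\in R_s(x_j,\sP_s+\delta_{x_1}+\cdots+\delta_{x_i})\}
\subseteq \{y\in R_s(x_j,\sP_s+\delta_{x_j})\},
\]
the last inclusion by removing the extra $\delta_{x_k}$'s; only then does (A3) apply. Your inclusion ``$R_s(y,\sP_s+\delta_x)\subseteq R_s(y,\sP_s+\delta_x+\delta_y)$'' is the reverse of what monotonicity gives.

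Second, ``Minkowski then H\"older'' in Step~two will not produce the factor $1+g_s(y)^5$ in $G_s$. The paper (following \cite{LSY19}) instead expands $|\sum_{x\in\sP_s}D_y\xi_s(x,\sP_s)|^{4+\varepsilon}$ multinomially, applies the multivariate Mecke formula, and groups into sums $I_i$ over $i$-tuples of distinct points with $1\le i\le 5$; H\"older with exponent $(p-\varepsilon)/(i(4+p))$ on the indicator $\mathds{1}_{D_y\xi_s(x_j,\cdot)\neq0}$ then yields $I_i\lesssim M_{s,p}(y)^{4+\varepsilon}g_s(y)^i$. This is where the fifth power and the specific constant $\zeta=p/(40+10p)$ come from. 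Once this lemma is in place, the rest of your outline (and in particular the identification $\beta=q/(16+4q)$ with $q=p/2$) matches the paper's argument.
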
 

In order to obtain a useful bound, it is necessary that
$\QQ (\widetilde M_s\kappa_s)$ is finite. This is surely the case
if $\QQ$ is finite and $\widetilde M_s$ is bounded.

As an application of our abstract result, we consider an example
regarding \emph{minimal points} in a Poisson process. Let $\QQ$ be the
Lebesgue measure on $\XX:=[0,1]^d$, $d \ge 2$, and let $\sP_s$ be a
Poisson process with intensity $s \QQ$ for $s\geq1$. A point
$x \in \R^d$ is said to dominate a point $y \in \R^d$ if
$x-y\in\R_+^d\setminus\{0\}$. We write $x\succ y$, or equivalently,
$y \prec x$ if $x$ dominates $y$.  Points in $\sP_s$ that do not
dominate any other point in $\sP_s$ are called minimal (or Pareto
optimal) points of $\sP_s$. The interest in studying dominance and
number of minima and maxima is due to its numerous applications
related to multivariate records, e.g., in the analysis of linear
programming and in maxima-finding algorithms, see the references in
\cite{bai:dev:hwan:05} and \cite{fil:naim20}. In the following result,
we derive non-asymptotic bounds on the Wasserstein and Kolmogorov
distances between the normalized number of minimal points in $\sP_s$,
and a standard Gaussian random variable.

\begin{theorem}\label{thm:Pareto} 
	Let $\sP_s$ be a Poisson process on $[0,1]^d$ with intensity measure
	$s\QQ$ and $s \ge 1$, where $\QQ$ is the Lebesgue measure, and let
	\begin{equation}
		\label{eq:ParetoPoints}
		F_s:=\sum_{x \in \sP_s} \mathds{1}_{x \text{ is a minimal point in $\sP_s$}}.
	\end{equation}
	If $d\geq 2$, then 
	\begin{displaymath}
		%    \label{eq:PO-K}
		\max \left\{d_W\left(\frac{F_s - \E F_s}{\sqrt{\Var
				F_s}},N\right),
		d_K\left(\frac{F_s - \E F_s}{\sqrt{\Var F_s}},N\right)\right\} \le
		\frac{C}{\log^{(d-1)/2} s}, \quad s\ge1, 
	\end{displaymath}
	for a constant $C>0$ depending only on the dimension $d$. In addition, the bound on the Kolmogorov distance is of optimal order, i.e., there exists a constant $0<C'\le C$ depending only on $d$ such that $d_K\left(\frac{F_s - \E F_s}{\sqrt{\Var F_s}},N\right) \ge C'/\log^{(d-1)/2} s$.
\end{theorem}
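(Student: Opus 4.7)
The plan is to apply Theorem~\ref{thm:KolBd} with the indicator score $\xi_s(x,\mu):=\mathds{1}\{x\text{ is minimal in }\mu\}$ and a stabilization region adapted to the dominance geometry. This is the motivating case of the paper: the natural region of influence for $x\in\XX=[0,1]^d$ is its dominance cone $[0,x]:=[0,x_1]\times\cdots\times[0,x_d]$, which is thin and elongated when $x$ lies near a coordinate hyperplane, so the ball-based stabilization of \cite{LSY19} is unavailable and the region-stabilizing framework of Theorem~\ref{thm:KolBd} is precisely what is needed.

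For (A1), I would set $R_s(x,\M)=[0,x]$ when $\M\cap[0,x]\setminus\{x\}=\varnothing$, and otherwise take the sub-level set $R_s(x,\M)=\{z\in[0,x]:\prod_iz_i\le\tau(x,\M)\}\cup\{x\}$ with threshold $\tau(x,\M):=\min\{\prod_iw_i:w\in\M\cap[0,x]\setminus\{x\}\}$. Monotonicity in $\M$ is immediate, since adding a point can only lower $\tau$ and shrink the sub-level set, and the stabilization identity holds because a minimizer of $\prod_iw_i$ among the witnesses lies in $R_s$ and forces $\xi_s=0$. Condition (A2) reduces to the indicator structure: since $\xi_s(x,\sP_s+\delta_x+\mu)=1$ requires $\sP_s\cap[0,x]=\varnothing$, one takes $M_{s,p}(x)=e^{-s\prod_ix_i/(4+p)}$, so $\widetilde M_{s,p}(x)\le e^{-2s\prod_ix_i/(4+p)}$. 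For (A3), a Poisson-void computation yields $\Prob{y\in R_s(x,\sP_s+\delta_x)}=\exp(-sV(x,y))$ for $y\prec x$ with $y\ne x$, where $V(x,y):=\mathrm{vol}\{z\in[0,x]:\prod_iz_i<\prod_iy_i\}=\prod_ix_i\cdot F(\prod_iy_i/\prod_ix_i)$ and $F(u)=u\sum_{k=0}^{d-1}(-\log u)^k/k!$ is the distribution function of the product of $d$ i.i.d.\ uniforms; one then sets $r_s(x,y):=sV(x,y)$.

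Combining the above with the known variance estimate $c_d\log^{d-1}s\le\Var F_s\le C_d\log^{d-1}s$ from \cite{bai:dev:hwan:05}, it remains to bound the integrals $s\QQ f_\beta^2$, $s\QQ f_{2\beta}$ and $s\QQ((\kappa_s+g_s)^{2\beta}G_s)$ appearing in Theorem~\ref{thm:KolBd} by $O(\log^{d-1}s)$; these then translate into bounds of order $1/\log^{(d-1)/2}s$ on both the Wasserstein and Kolmogorov distances. The substitution $t=\prod_iy_i$ with density $(-\log t)^{d-1}/(d-1)!$ reduces these integrals to one-dimensional integrals against the exponential factor $e^{-cst}$ coming from $\widetilde M_{s,p}$, which can be evaluated asymptotically. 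The main obstacle is controlling $g_s$ and $G_s$ on the boundary layer $\{y:s\prod_iy_i\lesssim\log s\}$ where the minima concentrate: since $V(x,y)$ grows only like $\prod_iy_i\cdot\log^{d-1}(\prod_ix_i/\prod_iy_i)/(d-1)!$, the decay provided by $e^{-\zeta r_s}$ alone is rather weak, especially for $d=2$, and it is the combination with $\widetilde M_{s,p}$ that ultimately produces the extra exponential decay needed to cut off the integrals at the polylogarithmic scale.
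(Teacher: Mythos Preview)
Your stabilization region is the source of a genuine gap. Keeping a nontrivial sub-level set $\{z\in[0,x]:|z|\le\tau(x,\M)\}$ when $x$ is \emph{not} minimal makes $r_s(x,y)=sV(x,y)$ far too small when $|y|\ll|x|$: in $d=2$ one has $V(x,y)=|y|(1+\log(|x|/|y|))$, so for $y\prec x$ with $|y|$ small the factor $e^{-\zeta r_s(x,y)}=(\,|x|/|y|\,)^{-\zeta s|y|}e^{-\zeta s|y|}$ decays only like a small negative power of $|x|/|y|$. A direct computation at $y=(s^{-1/2},s^{-1/2})$ (so $s|y|=1$) gives
\[
g_s(y)=s|y|\,e^{-\zeta s|y|}\prod_{i=1}^2\int_1^{1/y_i}u^{-\zeta s|y|}\,du
\;\asymp\; s^{1-\zeta},
\]
hence $g_s(y)^5\asymp s^{5(1-\zeta)}$. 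Your compensating factor $\widetilde M_{s,p}(y)=e^{-2s|y|/(4+p)}$ is merely a constant on $\{s|y|\asymp 1\}$, and that set has Lebesgue measure of order $1/s$; consequently $s\int_{\XX}\widetilde M_{s,p}(y)g_s(y)^5\,dy\gtrsim s^{5(1-\zeta)}\to\infty$, so $s\QQ\bigl((\kappa_s+g_s)^{2\beta}G_s\bigr)$ cannot be $O(\log^{d-1}s)$ and the bound from Theorem~\ref{thm:KolBd} is vacuous. The claimed rescue by $\widetilde M_{s,p}$ cannot work here because $g_s^5$ blows up exactly on the layer $\{s|y|=O(1)\}$ where $\widetilde M_{s,p}$ has no bite.

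The paper avoids this by a much simpler and sharper choice: set $R_s(x,\M+\delta_x)=[0,x]$ if $\M([0,x]\setminus\{x\})=0$ and $R_s=\varnothing$ otherwise. Monotonicity and the stabilization identity are then immediate (using the convention $\xi_s(x,0)=0$), and crucially $\P\{y\in R_s(x,\sP_s+\delta_x)\}=\mathds{1}_{y\preceq x}\,e^{-s|x|}$, i.e.\ $r_s(x,y)=s|x|$ for $y\preceq x$. This makes $g_s(y)=c_{\zeta,s}(y):=s\int_{x\succ y}e^{-\zeta s|x|}\,dx$, which by an explicit change of variables satisfies $c_{\zeta,s}(y)\le(D/\zeta)\,e^{-\zeta s|y|/2}\bigl(1+|\log(\zeta s|y|)|^{d-1}\bigr)$ and is therefore polylogarithmically bounded on the relevant layer. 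With $M_{s,p}\equiv1$ one then shows, via a sequence of elementary integral lemmas for $c_{\alpha,s}$, that all of $s\QQ f_\beta^2$, $s\QQ f_{2\beta}$ and $s\QQ((\kappa_s+g_s)^{2\beta}G_s)$ are $O(\log^{d-1}s)$; combined with the variance lower bound this yields the stated rate. The point you are missing is that collapsing the region to $\varnothing$ off the event $\{\xi_s=1\}$ transfers the full Poisson-void decay $e^{-s|x|}$ into $r_s$ itself, rather than splitting it between a weak $r_s$ and $M_{s,p}$.
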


In the setting of binomial point process with $n \in \N$ i.i.d.\
points in the unit cube, \cite{bai:dev:hwan:05} showed that the
Wasserstein distance between the normalized number of minimal points
and the standard normal random variable is of the order
$(\log n)^{-(d-1)/2}(\log\log n)^{2d}$ using a log-transformation
trick first suggested in \cite{Ba20}, and, as a consequence, derived
the order $(\log n)^{-(d-1)/4}(\log\log n)^{d}$ for the Kolmogorov
distance. It is useful to note here that the variance of the number of
minimal points in the binomial case is of the order $\log^{d-1} n$,
see, e.g., \cite{bai:dev:hwan:05}, where the corresponding
computations in the Poisson case are also available. Hence, the
Wasserstein distance is of the order of the square root of the
variance multiplied by an extraneous logarithmic factor, which, as
mentioned before, has commonly appeared in such contexts. Furthermore,
the bound on the Kolmogorov distance is vastly suboptimal. Our result
in the Poisson setting substantially improves these rates to the
square root of the variance of $F_s$, which is optimal for the Kolmogorov distance and presumably optimal for the Wasserstein distance.

It should be noted that, in the example of Pareto optimal points, we
are working with a simple Poisson process and a finite intensity
measure $\QQ$. Further examples confirm that our abstract bound
applies also for Poisson processes with a non-diffuse or infinite
intensity measure $\QQ$. Note that for measures with infinite intensity,
\cite{LSY19} requires that the score function decays exponentially
with respect to the distance to some set $K$, and the bound in
Eq.~(2.10) therein becomes trivial if this set $K$ is the whole space
and $\QQ$ is infinite.

The rest of the paper is organized as follows. In
Section~\ref{sec:Pareto} we prove Theorem~\ref{thm:Pareto}. Section
\ref{sec:ex} provides two examples in settings, where either the
intensity measure is infinite and non-diffuse or the $(4+p)$-th
moments of the score functions are unbounded over the space $\XX$, and
provide bounds on the rate of convergences in the Wasserstein and the
Kolmogorov distances for Gaussian approximation of certain statistics
related to isolated points in these models. Finally, in
Section~\ref{sec:Proof} we prove Theorem~\ref{thm:KolBd} which relies
on a modified version of Theorem~6.1 in \cite{LPS16}, see Theorem~\ref{thm:Main}. The proof of the latter is presented in the Appendix.

\section{Number of minimal points in the hypercube}
\label{sec:Pareto}

In this section, we apply Theorem~\ref{thm:KolBd} to prove
Theorem~\ref{thm:Pareto} providing a quantitative limit theorem for
the number of minimal points in a Poisson process on the
hypercube. Throughout this section, $\QQ$ is taken to be the Lebesgue
measure on $\XX:=[0,1]^d$ with $d\in \N$, and $\sP_s$ is a Poisson
process on $\XX$ with intensity measure $s\QQ$ for $s \ge 1$. We omit
$\QQ$ in integrals and write $\diff x$ instead of $\QQ(\ldiff x)$. The
functional $F_s$ from \eqref{eq:ParetoPoints} can be expressed as in
\eqref{eq:hs} with the score functions
\begin{equation}
	\label{eq:xi}
	\xi_s(x,\M):=\mathds{1}_{x \text{ is a minimal point in
			$\M$}},\quad x\in\M, \; \M \in \Nb.  
\end{equation}
As a convention, we let $\xi_s(x,0)=0$. It is straightforward to see
that $(\xi_s)_{s \ge 1}$ satisfies \eqref{eq:ximon}. We will show that
conditions (A1) and (A2) also hold, so that Theorem~\ref{thm:KolBd} is
applicable.

For $x:=(x^{(1)},\dots,x^{(d)})\in \XX$, let
$[0,x]:=[0,x^{(1)}] \times\cdots\times [0,x^{(d)}]$, and denote the
volume of $[0,x]$ by
\begin{displaymath}
	|x|:=x^{(1)}\cdots x^{(d)}.
\end{displaymath}
Given a counting measure $\M\in\Nb$ and $x\in\M$, define the
stabilization region as
\begin{equation*}
	%  \label{eq:Rs'}
	R_s(x,\M):= 
	\begin{cases}
		[0,x] & \mbox{if $\M([0,x]\setminus \{x\})=0$},\\
		\emptyset & \mbox{otherwise}.
	\end{cases} 
\end{equation*}
To begin with, we note here that the region $R_s$ can be the empty set
in our case, which rules out any possibility of it being represented
as a ball in some metric on the space $\XX$. Since for any
$x \in \XX$, the mapping
$\Nb \ni \mu \mapsto \mu([0,x]\setminus \{x\})$ is measurable, the
condition in \eqref{eq:1} follows. Next, it is easy to see that for
$x, y \in \XX$,
\begin{equation}\label{eq:Prob1}
	\Prob{y \in R_s(x,\sP_{s} + \delta_x)}=\mathds{1}_{x \succ y} e^{-s|x|},
\end{equation}
which is clearly measurable. Denote by $x_1\vee\dots\vee x_n$ the
coordinatewise maximum of $x_1,\dots,x_n \in \XX$, while
$x_1\wedge\dots\wedge x_n$ denotes their coordinatewise minimum. For
$x_1,x_2\in \XX$, notice that
$\{x_1,x_2\}\subseteq R_{s}(z,\sP_{s}+\delta_z)$ if and only if
$z\succ (x_1\vee x_2)$ and $[0,z] \setminus \{z\}$ has no points of
$\sP_s$. Thus
\begin{equation}\label{eq:Prob2}
	\Prob{\{x_1,x_2\}  \subseteq R_s(z,\sP_{s} + \delta_z)}
	=\mathds{1}_{z \succ x_1 \vee x_2} e^{-s|z|},
\end{equation}
which is also a measurable function of $(z,x_1, x_2)\in \XX^3$,
confirming \eqref{eq:2}. Clearly, $R_s$ is monotonically decreasing in
its second argument. It is straightforward to check (A1.3). Finally,
with $\xi_s$ as defined at \eqref{eq:xi}, it is easy to see that
(A1.4) is satisfied.  Furthermore, condition (A2) holds trivially with
$M_s\equiv 1$ for all $p \in (0,1]$ and $s \ge 1$, since $\xi_s$
is an indicator function. For definiteness, take $p=1$.

For $\xi_s$ as in \eqref{eq:xi},
by \eqref{eq:Prob1} the inequality \eqref{eq:Rs} turns into
an equality with $r_{s}(x,y):=s|x|$ if $y\preceq x$ and
$r_{s}(x,y):=\infty$ if $y$ is not dominated by $x$.

Throughout the section, for a function $f:[1,\infty) \to \R_+$, we will
write $f(s)=\mathcal{O}(\log^{d-1} s)$ to mean that
$f(s)/\log^{d-1} s$ is uniformly bounded for all $s \geq 1$. It is
well known (see, e.g., \cite{bai:dev:hwan:05}) that for all
$\alpha >0$,
\begin{equation}
	\label{eq:mean}
	s\int_{\XX} e^{-\alpha s |x|}\diff x=\mathcal{O}(\log^{d-1} s).
	%  \quad \text{as }\; s\to\infty}.
\end{equation}
In particular, by the Mecke formula,
$\E F_s = s\int_{\XX} e^{-s |x|}\diff x=\mathcal{O}(\log^{d-1}
s)$. Further, by the multivariate Mecke formula (see, e.g.,
\cite[Th.~4.4]{last:pen}),
\begin{equation*}
\Var(F_s) = \E F_s - (\E F_s)^2
+ s^2 \iint_{D} \Prob{x \text{ and } y
	\text{ are both minimal points in }
	\sP_s+\delta_x+\delta_y} \diff x \diff y,
\end{equation*}
where $D$ is the set of $(x,y) \in \XX^2$ such that $x$ and $y$ are
incomparable, i.e., $x \not \succ y$ and $y \not \succ x$.  Hence,
following the proof of Theorem~1 in \cite{bai:dev:hwan:05}, there
exist finite positive constants $C_1$ and $C_2$ such that
\begin{equation}
\label{eq:var}
C_1 \log^{d-1} s \le \operatorname{Var}(F_s) \le C_2 \log^{d-1} s,
\quad s \ge 1.
\end{equation}
For $\alpha>0$, $s>0$, and $d\in\N$, define the function
$c_{\alpha,s}: \XX \to \R_+$ as
\begin{equation}
\label{eq:cal}
c_{\alpha,s} (y):=s\int_{\XX} \mathds{1}_{x\succ y}
e^{-\alpha s |x|} \diff x.
\end{equation}
In view of the Mecke formula and the Poisson empty space formula,
$c_{1,s} (y)$ is the expected number of minimal points in
$\sP_s$ that dominate $y \in \XX$. Also note that $g_{s}(y)$ and $h_s(y)$ from
\eqref{eq:g} is equal to $c_{\zeta,s}(y)$ with $\zeta=p/(40+10p)=1/50$, so that $G_s(y) \le 3 + 2 c_{\zeta,s}(y)^5$.

Next, we specify the function $q_{s}$ from \eqref{eq:g2s}.  By
\eqref{eq:Prob2}, we have
\begin{displaymath}
q_{s}(x_1,x_2)
%  :=s \int_{\XX} \P\big\{\{x_1,x_2\} \subseteq R_{s}(z,\sP_{s}
%  +\delta_z)\big\} \diff z 
= s \int_\XX \mathds{1}_{z\succ (x_1\vee x_2)} e^{-s|z|}\diff z
=c_{1,s}(x_1\vee x_2).
\end{displaymath}

Studying the function $c_{\alpha,s}$ is essential to understand the
behaviour of minimal points.
% We start by looking at some basic properties of the function.
Note that $c_{\alpha,s}$ satisfies the scaling property
\begin{equation}
\label{eq:scale}
c_{\alpha,s}(y)=\alpha^{-1} c_{1,\alpha s}(y), \quad \alpha>0,\; s>0.
\end{equation}
This will often enable us to take $\alpha=1$ without loss of 
generality. The following lemma demonstrates the asymptotic behaviour
of the function $c_{\alpha,s}$ for large $s$. Before we state the
result, notice that for $i \in \N \cup \{0\}$ and $\alpha>0$,
$$
\int_{0}^\infty |\log w|^{i} e^{-\alpha w}\diff w \le \int_0^1 |\log w|^i \diff w + \int_1^\infty w^i e^{-\alpha w}\diff w \le  \int_0^1 |\log w|^i \diff w + \frac{\Gamma (i+1)}{\alpha^{i+1}}.
$$
Since any positive integer power of logarithm is integrable near zero,
for all $i \in \N \cup \{0\}$ and $\alpha>0$,
\begin{equation}
\label{eq:Gamma}
\int_{0}^\infty |\log w|^{i} e^{-\alpha w}\diff w <\infty. 
\end{equation}
%In the following the capital letter $D$ with or without indices stands
%for a generic constant that depends on some explicitly specified
%quantities. 

\begin{lemma}
\label{lemma:c-bound}
For all $\alpha>0$ and $s>0$, % let $c_{\alpha,s}$ be
% as in \eqref{eq:cal}. Then
\begin{displaymath}
	%    \label{eq:c-bound}
	c_{\alpha,s}(y)\leq \frac{D}{\alpha}
	e^{-\alpha s|y|/2}\Big[1+\big|\log(\alpha s|y|)\big|^{d-1}\Big], \quad y \in \XX
\end{displaymath}
for a constant $D$ that depends only on the dimension $d \in \N$.
\end{lemma}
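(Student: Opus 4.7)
The plan is to use the scaling identity \eqref{eq:scale} to reduce to $\alpha=1$ and then transform the $d$-dimensional integral defining $c_{1,t}(y)$, with $t:=\alpha s$, into a tractable one-dimensional one. The case $|y|=0$ is trivial, since the right-hand side of the claim is then infinite, so I may assume $y^{(i)}>0$ for all $i$. Enlarging the integration domain from $[0,1]^d$ to $[0,\infty)^d$ only increases the integrand, which is non-negative. The key substitution is $x^{(i)}=y^{(i)} e^{u^{(i)}}$ with $u^{(i)}\ge 0$, which converts $|x|$ into $|y| e^\sigma$, where $\sigma:=u^{(1)}+\cdots+u^{(d)}$, with Jacobian $|y| e^\sigma$. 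Projecting onto $\sigma$ via the standard simplex identity
\[
\int_{\{u\ge 0\}} \varphi(u^{(1)}+\cdots+u^{(d)})\diff u = \int_0^\infty \varphi(\sigma)\frac{\sigma^{d-1}}{(d-1)!}\diff\sigma
\]
and then setting $w:=t|y| e^\sigma$ yields
\[
c_{1,t}(y) \leq \frac{1}{(d-1)!}\int_{t|y|}^\infty e^{-w}\bigl(\log(w/(t|y|))\bigr)^{d-1}\diff w.
\]

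With $a:=t|y|$, I would split the analysis by the size of $a$. For $a\ge 1$, the change of variable $w=a+u$ together with $\log(1+u/a)\le u$ (valid since $a\ge 1$) bounds the integral by $e^{-a}\Gamma(d)$, and hence by $Ce^{-a/2}(1+|\log a|^{d-1})$. For $a\le 1$, I would split at $w=1$: on $[a,1]$ use $\log(w/a)\le |\log a|$ and $e^{-w}\le 1$, while on $[1,\infty)$ use $(\log(w/a))^{d-1}\le 2^{d-1}((\log w)^{d-1}+|\log a|^{d-1})$ together with the integrability provided by \eqref{eq:Gamma}. Both parts combine to $C(1+|\log a|^{d-1})$, which absorbs into $Ce^{-a/2}(1+|\log a|^{d-1})$ since $e^{-a/2}\ge e^{-1/2}$ in this regime. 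Reapplying \eqref{eq:scale} then produces the claimed factor $D/\alpha$.

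The main subtlety is the small-$|y|$ regime: because the set $\{x\succ y\}\cap[0,1]^d$ can occupy almost the entire cube when $|y|$ is tiny, no pure exponential decay of $c_{\alpha,s}(y)$ is available, and the $|\log(\alpha s|y|)|^{d-1}$ factor in the claim arises precisely from the integrable logarithmic singularity at the lower end of the one-dimensional integral above. Everything else reduces to the change of variables outlined in the first paragraph.
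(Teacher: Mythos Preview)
Your proof is correct and follows essentially the same strategy as the paper: reduce to $\alpha=1$ via the scaling relation \eqref{eq:scale}, then convert the $d$-fold integral into the one-dimensional integral $\int_{a}^{\infty} e^{-w}\bigl(\log(w/a)\bigr)^{d-1}\diff w$ (with $a=t|y|$) through a logarithmic change of variables, and finally estimate this integral to extract the factor $e^{-a/2}(1+|\log a|^{d-1})$.

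The execution differs in two places, and yours is the more streamlined one. First, by enlarging the domain to $[0,\infty)^d$ and using the substitution $x^{(i)}=y^{(i)}e^{u^{(i)}}$ together with the exact simplex identity, you land directly on the one-dimensional integral with the clean prefactor $1/(d-1)!$; the paper instead keeps the domain $[0,1]^d$, performs the chain of substitutions $u=s^{1/d}x$, $z^{(i)}=-\log u^{(i)}$, $v^{(i)}=z^{(i)}+\cdots+z^{(d)}$, and must then bound the ranges of $v^{(2)},\dots,v^{(d)}$ by hand, which reproduces the same integral up to constants but with upper limit $s$ rather than $\infty$. Second, for the final estimate the paper splits the exponential as $e^{-w}=e^{-w/2}e^{-w/2}$, pulls out $e^{-a/2}$, and invokes Jensen and \eqref{eq:Gamma}; your case split $a\ge 1$ versus $a\le 1$ achieves the same bound with equally elementary tools. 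Neither route gains anything substantive over the other.
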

\begin{proof}
The result is trivial when $d=1$, so we assume $d \ge 2$. By
\eqref{eq:scale}, we can also assume that $\alpha=1$. The following
derivation is motivated by those used to calculate the mean of
the number of minimal points in
\cite[Sec.~2]{bai:dev:hwan:05}. Changing variables $u=s^{1/d}x$ in
the definition of $c_{1,s}$ to obtain the first equality, and
letting $z^{(i)}=-\log u^{(i)}$, $i=1,\dots,d$, in the second, for
$y \in \XX$, we obtain
\begin{align*}
	c_{1,s}(y) %&= s\int_{x\succ y} e^{- s |x|} dx
	&=\int_{\times_{i=1}^d [s^{1/d}y^{(i)}, s^{1/d}]} e^{-|u|} \diff u\\
	&=\int_{\times_{i=1}^d \big[-d^{-1}\log s, -d^{-1}\log s -
		\log y^{(i)}\big]}
	\exp \bigg\{-e^{-\sum_{j=1}^d z^{(j)}} - \sum_{j=1}^d z^{(j)}
	\bigg\} \diff z.
\end{align*}
Next, we change variables by letting $v=(v^{(1)}, \dots, v^{(d)})$
with $v^{(i)}:=z^{(i)}+\cdots+z^{(d)}$, $i=1,\dots,d$. Note that the
integrand is only a function of $v^{(1)}$. Taking into account the
integration bounds on $z^{(i)}$, we have 
\begin{displaymath}
	v^{(1)} - \bigg(- \frac{i-1}{d} \log s - \sum_{j=1}^{i-1} \log y^{(i)}\bigg)
	\le v^{(i)} \le - \frac{d-i+1}{d} \log s - \sum_{j=i}^d \log
	y^{(i)},
	\quad 2 \le i \le d.
\end{displaymath}
Thus, for each $2 \le i \le d$, the integration variable $v^{(i)}$
belongs to an interval of length at most $(-\log (s|y|) -
v^{(1)})$. Using the substitution $w=e^{-v^{(1)}}$ in the second
step and Jensen's inequality in the last one, we obtain
\begin{align*}
	c_{1,s}(y)& \le \int_{-\log s}^{-\log (s|y|)} \Big(-\log (s|y|) -
	v^{(1)}\Big)^{d-1} \exp\Big\{-e^{-v^{(1)}} - v^{(1)}\Big\} \diff v^{(1)}\\
	& =\int_{s|y|}^{s} \Big(\log w-\log (s|y|)\Big)^{d-1}e^{-w}
	\diff w \\
	& \le 2^{d-2} e^{-s|y|/2} \bigg[\big|\log (s|y|)\big|^{d-1}
	+ \int_{s|y|}^s |\log w|^{d-1} e^{-w/2}\diff w \bigg].
\end{align*}
The result now follows by \eqref{eq:Gamma}.
\end{proof}

Before proceeding to estimate the bound in Theorem~\ref{thm:KolBd}, we
need some estimates of integrals involving $c_{\alpha,s}$ and
$|x|$. We will often use the following representation: for $\alpha>0$,
$s \ge 1$ and $i \in \N$,
\begin{align}\label{eq:crep}
s\int_\XX c_{\alpha,s}(x)^i \diff x
&=s\int_\XX \prod_{j=1}^i \Big(s\int_\XX \mathds{1}_{z_j \succ x}
e^{- \alpha s \sum_{j=1}^i |z_j| }\diff z_j\Big) \diff x\nonumber \\
& =s^{i+1} \int_{\XX^i} \big|z_1 \wedge \dots\wedge
z_i\big| e^{- \alpha s \sum_{j=1}^i |z_j| } \diff(z_1, \dots, z_i).
\end{align}

\begin{lemma}\label{lem:intbd}
For all $i \in \N$ and $\alpha>0$,
\begin{gather}
	\label{eq:c1}
	s\int_{\XX} c_{\alpha,s}(y)^i \diff y=
	\mathcal{O}(\log^{d-1} s),\\ %\quad \text{as }\; s\to\infty,\\
	\label{eq:c2}
	s\int_{\XX}  \left(s \int_{\XX}e^{-\alpha s|x\vee
		y|}\diff x\right)^i \diff y=
	\mathcal{O}(\log^{d-1} s),\\ %\quad \text{as }\; s\to\infty,\\
	\label{eq:c3}
	s \int_{\XX} \left(s\int_{\XX} c_{\alpha,s}(x\vee y) \diff x
	\right)^i \diff y
	= \mathcal{O}(\log^{d-1} s), %\quad \text{as }\; s\to\infty.
\end{gather}
where the constants in the bounds on the right-hand sides may
depend on $i$.
\end{lemma}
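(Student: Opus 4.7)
The plan is to derive all three estimates from a single auxiliary one-variable bound. First, we will show that for every $\beta>0$ and $k\in\N\cup\{0\}$,
\[
  s\int_\XX e^{-\beta s|y|}\big(1+|\log(\beta s|y|)|^k\big)\diff y
  = \mathcal{O}(\log^{d-1}s).
\]
This will follow by substituting $u=s^{1/d}y$ (so that $|u|=s|y|$ and $s\diff y=\diff u$) to rewrite the integral as $\int_{[0,s^{1/d}]^d} e^{-\beta|u|}(1+|\log|u||^k)\diff u$, and then using the well-known density formula: if $u$ is uniform on $[0,T]^d$ then $|u|$ admits the density $(\log(T^d/t))^{d-1}/((d-1)!\,T^d)$ on $(0,T^d]$, a consequence of the fact that $-\log(|u|/T^d)$ is $\mathrm{Gamma}(d,1)$-distributed. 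With $T=s^{1/d}$, the integral becomes $\frac{1}{(d-1)!}\int_0^s e^{-\beta t}(1+|\log t|^k)(\log(s/t))^{d-1}\diff t$. Expanding $(\log s-\log t)^{d-1}$ by the binomial theorem and applying \eqref{eq:Gamma} to each of the finitely many resulting one-variable integrals yields the claim.

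Given this estimate, \eqref{eq:c1} is immediate from Lemma~\ref{lemma:c-bound}: raising to the $i$-th power gives $c_{\alpha,s}(y)^i\le C e^{-i\alpha s|y|/2}(1+|\log(\alpha s|y|)|^{i(d-1)})$, and the above bound with $\beta=i\alpha/2$ and $k=i(d-1)$ concludes. For \eqref{eq:c3}, we first swap the order of integration via Fubini:
\[
  s\int_\XX c_{\alpha,s}(x\vee y)\diff x
  = s^2\int_\XX \mathds{1}_{z\succ y}\,|z|\,e^{-\alpha s|z|}\diff z,
\]
using $\int_\XX \mathds{1}_{z\succ x}\diff x=|z|$. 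The elementary bound $s|z|e^{-\alpha s|z|}\le (2/(\alpha e))\,e^{-\alpha s|z|/2}$ then shows that the right-hand side is at most a constant times $c_{\alpha/2,s}(y)$, so \eqref{eq:c3} reduces to \eqref{eq:c1}.

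The main obstacle is \eqref{eq:c2}, because the dependence of $|x\vee y|$ on its two arguments is not separable. We handle this by decomposing $\XX$ according to which coordinates of $x$ exceed those of $y$: for each $S\subseteq\{1,\ldots,d\}$, set $D_S(y):=\{x\in\XX:x^{(i)}\ge y^{(i)}\iff i\in S\}$. On $D_S(y)$, the integrand $e^{-\alpha s\prod_{i\in S}x^{(i)}\prod_{j\notin S}y^{(j)}}$ does not depend on the coordinates of $x$ outside $S$, so these integrate to $b:=\prod_{j\notin S}y^{(j)}$. Writing $\tilde s:=sb$ and $y_S:=(y^{(i)})_{i\in S}\in[0,1]^{|S|}$, the $S$-contribution equals the $|S|$-dimensional analogue of $c_{\alpha,\tilde s}$ evaluated at $y_S$. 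Applying the $|S|$-dimensional version of Lemma~\ref{lemma:c-bound} together with the crucial identity $\tilde s|y_S|=sb\prod_{i\in S}y^{(i)}=s|y|$, each such $S$-term is bounded by $C e^{-\alpha s|y|/2}(1+|\log(\alpha s|y|)|^{d-1})$; the degenerate case $S=\emptyset$ contributes $s|y|e^{-\alpha s|y|}$, which enjoys the same bound since $te^{-t}\le Ce^{-t/2}$. Summing over the $2^d$ subsets, raising to the $i$-th power, and integrating over $y$ using the one-variable bound established above yields the desired $\mathcal{O}(\log^{d-1}s)$.
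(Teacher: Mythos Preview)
Your proof is correct and follows essentially the same route as the paper: both apply Lemma~\ref{lemma:c-bound} to reduce \eqref{eq:c1} to a one-variable integral of the form $s\int_\XX e^{-\beta s|y|}(1+|\log(s|y|)|^k)\diff y$, both handle \eqref{eq:c2} by decomposing the inner integral over subsets $S\subseteq\{1,\ldots,d\}$ and recognizing the $S$-part as a lower-dimensional $c_{\alpha,\tilde s}$ with $\tilde s|y_S|=s|y|$, and both dispatch \eqref{eq:c3} by integrating out $x$ via Fubini and using $te^{-t}\le C\,e^{-t/2}$. Your organization is somewhat more streamlined---you isolate the auxiliary one-variable bound once (phrasing it via the Gamma density of $|u|$ rather than the paper's chain of substitutions) and bound each inner integral pointwise before raising to the $i$-th power, whereas the paper pushes the $i$-th power inside the subset sum via Jensen and repeats the substitution computation in each piece; but the underlying estimates are identical. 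One trivial slip: after the substitution $u=s^{1/d}y$ the logarithm should read $|\log(\beta|u|)|^k$ rather than $|\log|u||^k$, though the missing $\log\beta$ is a harmless additive constant.
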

\begin{proof}
As in Lemma~\ref{lemma:c-bound}, without loss of generality let
$\alpha=1$ and $s \ge 1$.  We first prove
\eqref{eq:c1}. %For $i=1$, changing the
%order of integration in the first step and using \eqref{eq:mean}
%yields that
%\begin{align*}
% s\int_{\XX} c_{1,s}(y) \lambda(dy)
% =s^2 \int_\XX \int_{y \prec x}  e^{-s |x|} \lambda(dx)\lambda(dy)
%&=s\int_{\XX} (s |x| e^{-s l(x)/2}) e^{-s |x|/2} \lambda(dx)\\
%& \le 2 s\int_{\XX} e^{-s |x|/2} \lambda(dx) = \mathcal{O}(\log^{d-1} s).
%\end{align*}
For $i \in \N$, by Lemma~\ref{lemma:c-bound} and Jensen's inequality, we have
\begin{equation}
	\label{eq:aux}
	s\int_{\XX} c_{1,s}(y)^i \diff y
	\le 2^{i-1} D^i \left[s \int_{\XX} e^{- is|y|/2} \diff y
	+  s \int_{\XX} e^{- is|y|/2} \big|\log(s|y|)\big|^{i(d-1)}
	\diff y\right],
\end{equation}
with $D$ as in Lemma~\ref{lemma:c-bound}. The first
summand is of the order of $\log^{d-1} s$ by \eqref{eq:mean}. For
the second summand, we employ a similar substitution as in
Lemma~\ref{lemma:c-bound} and \cite{bai:dev:hwan:05}:
\begin{align*}
	& s \int_{\XX}  e^{- is|y|/2} \big|\log( s|y|)\big|^{i(d-1)} \diff y
	\le \int_{[0,s^{1/d}]^d} e^{-|u|/2} \big|\log |u|\big|^{i(d-1)}
	\diff u \qquad\;\;\;  (u=s^{1/d} x)\\
	& =\int_{[-d^{-1}\log s, \infty)^d} \exp
	\left\{- e^{-\frac{1}{2}\sum_{j=1}^d z^{(j)}} -
	\sum_{j=1}^d z^{(j)} \right\}
	\Bigg|\sum_{j=1}^d z^{(j)}\Bigg|^{i(d-1)} \diff z
	\quad\;\;\,\; (z^{(j)}=-\log u^{(j)})\\
	&\leq % \frac{1}{(d-1)!}
	\int_{-\log s}^\infty
	\big(\log s + v^{(1)}\big)^{d-1}
	\exp\Big\{-e^{-v^{(1)}/2} - v^{(1)}\Big\}
	|v^{(1)}|^{i(d-1)} \diff v^{(1)}
	\quad \quad \; (v^{(i)}=\sum_{j=i}^d z^{(j)})\\
	&= %\frac{1}{(d-1)!}
	\int_{0}^s \big(\log s -\log
	w\big)^{d-1} e^{-\sqrt{w}} |\log w|^{i(d-1)} \diff w \qquad\qquad\qquad\qquad\qquad \quad \, (w=e^{-v^{(1)}})\\
	&\le 2^{d-2} %\frac{2^{d-2}}{(d-1)!}
	\left[\log^{d-1} s
	\int_{0}^\infty e^{-\sqrt{w}} |\log w|^{i(d-1)}
	\diff w
	+ \int_{0}^\infty e^{-\sqrt{w}} |\log
	w|^{(i+1)(d-1)} \diff w\right],
\end{align*}
where the last step is due to Jensen's inequality. Finally, by
substituting $t=\sqrt{w}$ and using that $t e^{-t/2} \le 2$ for
$t \ge 0$, we have 
\begin{displaymath}
	\int_{0}^\infty e^{-\sqrt{w}} |\log w|^{j} \diff w
	=\int_{0}^\infty 2^{1+j} t e^{-t} |\log t|^{j} \diff t
	\le 2^{2+j} \int_{0}^\infty  e^{-t/2} |\log t|^{j} \diff t,
	\quad j\in\N.
\end{displaymath}
The result now follows by \eqref{eq:Gamma}.

Next, we move on to proving \eqref{eq:c2}. For
$x \in \XX$ and $I \subseteq \{1,\dots,d\}$, we write $x^I$ for
the subvector $(x^{(i)})_{i \in I}$. Assume that $x\vee y=(x^I,y^J)$ with
$J:=I^c$. %First, when $I=[d]$, then
%\begin{align*}
%  s\int_{\XX}  \left(s \int_{x \succ y}e^{-\alpha s |x\vee y|}
%   \lambda(dx)\right)^i \lambda(dy) 
%&=	s\int_{\XX}  \left(s \int_{x \succ y}e^{-\alpha s |x|}
%  \lambda(dx)\right)^i \lambda(dy) \\
% &= s\int_{\XX}  c_{\alpha,s}(y)^i\lambda(dy) =\mathcal{O}(\log^{d-1} s)
%\end{align*}
%by \eqref{eq:c1}. 
Note that by Jensen's inequality, we have
\begin{equation}\label{eq:maxsplit}
	s\int_{\XX}  \left(s \int_{\XX}e^{- s|x\vee
		y|}\diff x\right)^i \diff y\le 2^{(i-1)d}
	\sum_{I \subseteq \{1,\dots,d\}} s\int_{\XX}
	\left(s \int_\XX \mathds{1}_{x^I \succ y^I, x^J \prec y^J}e^{- s|x^I| |y^J| }\diff x\right)^i \diff y.
\end{equation}
First, if $I=\emptyset$, splitting the exponential
into the product of two exponentials with the power halved, using $t^i e^{-t}
\le i!$ for $t\geq0$, and referring to \eqref{eq:mean} yield that 
\begin{displaymath}
	s\int_{\XX}  \left(s \int_\XX
	\mathds{1}_{x \prec y}e^{-s |y|} \diff x\right)^i \diff y 
	=	s\int_{\XX} (s|y|)^i  e^{-i s |y|} \diff y
	=\mathcal{O}(\log^{d-1} s).
\end{displaymath}
Next, assume that $I$ is nonempty and of cardinality $m$, with
$1 \le m \le d$. As a convention, let $|y^\emptyset|:=1$ for all
$y \in \XX$. Using Lemma~\ref{lemma:c-bound} with $\alpha=1$
and Jensen's inequality in the second step, we obtain
\begin{align*}
	s\int_{\XX}   \left(s \int_\XX \mathds{1}_{x^I \succ y^I, x^J \prec y^J}
	e^{-s |x^I|\,|y^J|} \diff x\right)^i  \diff y 
	&=	s\int_{\XX}\left(s |y^J| \int_{[0,1]^m} \mathds{1}_{x^I \succ y^I}   e^{-s |x^I|\,|y^J|}
	\diff x^I\right)^i \diff y\\
	& \le D^{i} 2^{i-1} s \int_{\XX} e^{-i
		s|y|/2}\Big[1+\big|\log(s|y|)\big|^{i(m-1)}\Big]\diff y,
\end{align*}
with $D$ as in Lemma~\ref{lemma:c-bound}. The two summands can be
bounded in the same manner as it was done for \eqref{eq:aux},
providing a bound of the order of $\log^{d-1} s$. The bound in
\eqref{eq:c2} now follows from \eqref{eq:maxsplit}.

Finally, we confirm \eqref{eq:c3}. Using that $te^{-t} \le 1$ for
$t \ge 0$ in the first inequality, we have
\begin{align*}
	s &\int_{\XX} \left(s\int_{\XX} c_{1,s}(x\vee y) \diff x \right)^i
	\diff y\\
	&= s^{2i+1} \int_{\XX} \int_{\XX^i} \bigg[\prod_{j=1}^{i} \int_\XX
	\mathds{1}_{z_j\succ x_j\vee y} 
	e^{- s \sum_{j=1}^i |z_j|} \diff z_j\bigg]
	\diff (x_1, \dots, x_i) \diff y\\
	&= s^{i+1} \int_{\XX^i} \bigg(s^i \prod_{j=1}^i |z_j|
	e^{- s \sum_{j=1}^i |z_j|/2 }\bigg)\; \big|z_1 \wedge \dots\wedge
	z_i\big| e^{- s \sum_{j=1}^i |z_j|/2 } \diff(z_1, \dots, z_i)\\
	&\le 2^i s^{i+1} \int_{\XX^i} \big|z_1 \wedge \dots\wedge
	z_i\big| e^{- s \sum_{j=1}^i |z_j|/2 } \diff(z_1, \dots, z_i)\\
	&\le 2^i s \int_\XX c_{1/2,s}(x)^i \diff x=\mathcal{O}(\log^{d-1} s),
\end{align*}
where we have also used \eqref{eq:crep} in the penultimate step and
\eqref{eq:c1} for the final step.
\end{proof}

Now we are ready to derive the bound in
Theorem~\ref{thm:KolBd}. Recall from
Section~\ref{sec:notat-main-results} the constants $\beta=p /(32+4 p)$
and $\zeta=p/(40+10p)$, which, in particular, satisfy that
$\zeta < 2\beta$. For our example, it suffices to let
$p=1$. Nonetheless, the following bounds are derived for any $\beta$
and $\zeta$, satisfying the above condition.

\begin{lemma}
\label{lem:intg1s}
For all $\beta\in(0,1/2)$, $\zeta\in(0,2\beta)$ and $f_{2\beta}$
defined at \eqref{eq:fa},
\begin{align*}
	s \int_{\XX} f_{2\beta}(x_1) \diff x_1=\mathcal{O}(\log^{d-1}s).
	%    \quad \text{as }\; s\to\infty.
\end{align*}
\end{lemma}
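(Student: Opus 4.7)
I will split $f_{2\beta}=f^{(1)}_{2\beta}+f^{(2)}_{2\beta}+f^{(3)}_{2\beta}$ and handle the three pieces separately. Two of them are routine. For $f^{(1)}_{2\beta}$, Fubini turns $s\int_\XX f^{(1)}_{2\beta}(y)\diff y$ into $s^2\int_\XX G_s(x)|x|e^{-2\beta s|x|}\diff x$; the elementary bound $s|x|e^{-\beta s|x|}\le C$ reduces this to $s\int_\XX G_s(x)e^{-\beta s|x|}\diff x$, and splitting $G_s=1+c_{\zeta,s}^5$ the two resulting pieces are $\mathcal{O}(\log^{d-1}s)$ by \eqref{eq:mean} and \eqref{eq:c1} respectively. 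For $f^{(2)}_{2\beta}$, Fubini together with the definition of $c_{2\beta,s}$ yields $s\int_\XX G_s(x)c_{2\beta,s}(x)\diff x$; the standing hypothesis $\zeta<2\beta$ gives $c_{2\beta,s}\le c_{\zeta,s}$ pointwise, so the integrand is dominated by $c_{\zeta,s}+c_{\zeta,s}^6$ and \eqref{eq:c1} closes this term.

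The only delicate term is $f^{(3)}_{2\beta}$. Using the bound $q_s(x,y)\le c_{1,s}(x\vee y)$ derived earlier in the section, it suffices to show that
\begin{equation*}
s^2\int_\XX\int_\XX G_s(x)\,c_{1,s}(x\vee y)^{2\beta}\diff x\diff y=\mathcal{O}(\log^{d-1}s).
\end{equation*}
My basic tool is the identity
\begin{equation*}
\int_\XX\int_\XX f(x\vee y)\diff x\diff y=2^d\int_\XX|u|\,f(u)\diff u,
\end{equation*}
obtained by partitioning $\XX^2=\bigsqcup_{I\subseteq\{1,\ldots,d\}}R_I$, where $R_I$ is the set of $(x,y)$ for which $x^{(i)}\ge y^{(i)}$ exactly when $i\in I$, integrating out the ``free'' halves $y^I\in[0,x^I]$ and $x^{I^c}\in[0,y^{I^c}]$, and substituting $u=x\vee y=(x^I,y^{I^c})$. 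Applied to the $G_s=1$ piece, this reduces the task to $2^d s^2\int_\XX c_{1,s}(u)^{2\beta}|u|\diff u$, which is $\mathcal{O}(\log^{d-1}s)$ after absorbing $s|u|$ via $s|u|e^{-\beta s|u|/2}\le C$, applying Lemma~\ref{lemma:c-bound} to $c_{1,s}(u)^{2\beta}$, and invoking the standard estimate $s\int e^{-\alpha s|u|}(1+|\log(s|u|)|^k)\diff u=\mathcal{O}(\log^{d-1}s)$.

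The main obstacle is the $c_{\zeta,s}(x)^5$ piece of $G_s$: naive bounds such as $c_{1,s}(x\vee y)\le c_{1,s}(y)$ ignore the joint $(x,y)$-structure and produce an extra factor of order $s$, since $s\int_\XX G_s(x)\diff x=\mathcal{O}(s)$. My strategy is to apply the $R_I$-decomposition to the integrand \emph{before} collapsing $x\vee y$. On $R_I$ one substitutes $u:=(x^I,y^{I^c})\in\XX$ and integrates out $y^I\in[0,u^I]$ (producing the factor $|u^I|$), so the remaining interior integral is $\int_{[0,u^{I^c}]}c_{\zeta,s}((u^I,x^{I^c}))^5\diff x^{I^c}$. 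After rescaling $x^{I^c}=u^{I^c}t$ for $t\in[0,1]^{|I^c|}$ and applying Lemma~\ref{lemma:c-bound} together with the auxiliary estimate
\begin{equation*}
\int_{[0,1]^m}e^{-as'|t|}\bigl(1+|\log(s'|t|)|^k\bigr)\diff t\le C\bigl(1+|\log s'|^{m-1+k}\bigr),\qquad s'>0,\; m\ge 1,
\end{equation*}
(a direct generalization of the integral manipulations in Lemma~\ref{lemma:c-bound} and in the proof of \eqref{eq:c1}), this interior integral is bounded by $C|u^{I^c}|(1+|\log(s|u|)|^{m-1+k})$ with $m:=|I^c|$; the boundary case $I=[d]$ is handled directly using $c_{1,s}\le c_{\zeta,s}$ and the $(5+2\beta)$-th power bound from Lemma~\ref{lemma:c-bound}. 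The factors $|u^I|$ and $|u^{I^c}|$ combine into $|u|$, which is absorbed into an exponential via $s|u|e^{-\beta s|u|/2}\le C$; what remains is $s\int e^{-\beta s|u|/2}(\text{polylog in }s|u|)\diff u=\mathcal{O}(\log^{d-1}s)$. Summing over the $2^d$ subsets $I$ preserves this order and completes the bound.
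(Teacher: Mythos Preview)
Your proof is correct. Your treatment of $f^{(1)}_{2\beta}$ and $f^{(2)}_{2\beta}$ is essentially the paper's argument reorganized (Fubini first, then split $G_s$, rather than the reverse), and the identity $\int_\XX\int_\XX f(x\vee y)\diff x\diff y=2^d\int_\XX|u|f(u)\diff u$ together with your auxiliary estimate is sound.

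Where you genuinely diverge from the paper is in handling $f^{(3)}_{2\beta}$. The paper never decomposes the $(x,y)$-domain into the regions $R_I$. Instead it invokes the linearization inequality $c_{1,s}(u)^{2\beta}\le e^{-2\beta s|u|}+c_{2\beta,s}(u)$ (equation~\eqref{eq:cl1}), which replaces the fractional power by a sum of two ``nice'' functions. The $e^{-2\beta s|x\vee y|}$ contribution is then controlled via \eqref{eq:c2}, and the $c_{2\beta,s}(x\vee y)$ contribution via \eqref{eq:c3}; for the $c_{\zeta,s}(x)^5$ piece, the paper expands all $c$-functions into their integral representations, obtains an expression in $|z_1\wedge\cdots\wedge z_6|$ using \eqref{eq:crep}, and closes with \eqref{eq:c1} and Cauchy--Schwarz. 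So the paper's route is: linearize the awkward power, then reuse the ready-made integral estimates of Lemma~\ref{lem:intbd}. Your route is: keep the power, but resolve the $x\vee y$ structure geometrically via the $R_I$ partition and reduce everything to a single $u$-integral with polylog weights from Lemma~\ref{lemma:c-bound}. The paper's approach is more modular (it leans on Lemma~\ref{lem:intbd}, which is reused elsewhere, notably for $f_\beta^2$ in Lemma~\ref{lem:intg2s}); your approach is more self-contained and avoids the trick \eqref{eq:cl1}, at the cost of the auxiliary estimate on $\int_{[0,1]^m}e^{-as'|t|}(1+|\log(s'|t|)|^k)\diff t$, which you state but do not prove in full (it does follow by the same change-of-variables argument used in the proof of \eqref{eq:c1}).
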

\begin{proof} 
We first bound the integral of $f_{2\beta}^{(1)}$ defined at \eqref{eq:fal}. By
\eqref{eq:c1},
\begin{displaymath}
	s\int_{\XX} s\int_{\XX} e^{-2\beta r_s(x_2,x_1)}\diff x_2
	\diff x_1 =  s \int_{\XX} s \int_\XX \mathds{1}_{x_2 \succ x_1}
	e^{- 2\beta s |x_2|}\diff x_2
	\diff x_1
	=\mathcal{O}(\log^{d-1} s). 
\end{displaymath}
If $x_2 \succ x_1$, then
$c_{\zeta,s}(x_2) \le c_{\zeta,s}(x_1)$. Since $\zeta <2\beta$, by
\eqref{eq:c1},
\begin{align}\label{eq:2.1}
	s \int_{\XX} s \int_{\XX} c_{\zeta,s}(x_2)^5 e^{- 2\beta r_s(x_2,x_1)}  \diff x_2 \diff x_1
	&\le s \int_{\XX} c_{\zeta,s}(x_1)^5 s \int_\XX \mathds{1}_{x_2 \succ x_1} 
	e^{- 2\beta s |x_2|}  \diff x_2 \diff x_1\nonumber \\
	& \le s  \int_{\XX} c_{\zeta,s}(x_1)^6 \diff x_1=\mathcal{O}(\log^{d-1} s).
\end{align}
Since $G_s(y) \le 3 + 2 c_{\zeta,s}(y)^5$, combining the above two bounds, we obtain
\begin{displaymath}
	%    \label{eq:fb1}
	s \int_{\XX} f_{2\beta}^{(1)}(x_1) \diff x_1 =\mathcal{O}(\log^{d-1} s).
\end{displaymath}
We move on to $f_{2\beta}^{(2)}$. Using again that $te^{-t} \le 1$
for $t\geq0$ and \eqref{eq:mean}, we have
\begin{multline*}
	s\int_{\XX} s\int_{\XX} e^{-2\beta r_s(x_1,x_2)}\diff x_2
	\diff x_1 = s \int_{\XX} s \int_\XX
	\mathds{1}_{x_2 \prec x_1} e^{- 2\beta s |x_1|}
	\diff x_2 \diff x_1\\
	= s \int_{\XX} s|x_1|e^{- 2\beta s |x_1|}\diff x_1
	\le  s\beta^{-1} \int_{\XX} e^{- \beta s |x_1|}\diff x_1=\mathcal{O}(\log^{d-1} s). 
\end{multline*}
Also, $\zeta< 2\beta$ and
\eqref{eq:c1} yield that
\begin{align*}
	s & \int_{\XX} s \int_{\XX}c_{\zeta,s}(x_2)^5 e^{- 2\beta r_s(x_1,x_2)} 
	\diff x_2 \diff x_1\\
	&\le s \int_{\XX} c_{\zeta,s}(x_2)^5 \left(s\int_\XX \mathds{1}_{x_1 \succ x_2}
	e^{-\zeta s |x_1|} \diff x_1\right)\diff x_2
	=s \int_{\XX} c_{\zeta,s}(x_2)^6\diff x_2 =\mathcal{O}(\log^{d-1} s).
\end{align*}
Thus,
\begin{displaymath}
	%    \label{eq:fb22}
	s \int_{\XX} f_{2\beta}^{(2)}(x_1) \diff x_1=\mathcal{O}(\log^{d-1} s).
\end{displaymath}
It remains to  bound the integral of $f_{2\beta}^{(3)}$. For $\alpha<1$ and
$x \in \XX$, we have
\begin{multline}\label{eq:cl1}
	c_{1,s}(x)^\alpha=e^{-\alpha s |x|} \left(s \int_\XX \mathds{1}_{z \succ x}
	e^{-s(|z|-|x|)} \diff z\right)^\alpha 
	\le e^{-\alpha s |x|}\left[1+ s \int_\XX \mathds{1}_{z \succ x}
	e^{-s(|z|-|x|)} \diff z\right]\\ 
	 \le e^{-\alpha s |x|}\left[1+ s \int_\XX \mathds{1}_{z \succ x}
	e^{-\alpha s(|z|-|x|)} 
	\diff z\right]=e^{-\alpha s |x|} + c_{\alpha,s}(x).
\end{multline}
Thus, noticing that $2\beta<1$ and using Lemma~\ref{lem:intbd},
\begin{multline*}
	s \int_{\XX} s \int_{\XX} q_s(x_1,x_2)^{2\beta}
	\diff x_2 \diff x_1 = s \int_{\XX} s \int_{\XX} c_{1,s}(x_1\vee x_2)^{2\beta} 
	\diff x_2 \diff x_1\\
	\le	s^2 \int_{\XX^2}  e^{-2\beta s |x_1\vee x_2|}
	\diff (x_1,x_2)
	+ s^2 \int_{\XX^2} c_{2\beta,s}(x_1\vee x_2)
	\diff (x_1,x_2) =\mathcal{O}(\log^{d-1} s).
\end{multline*}
Finally, using \eqref{eq:cl1} and that $\zeta< 2\beta$ for the inequality, write
\begin{align*}
	s& \int_{\XX} s \int_{\XX} c_{\zeta,s}(x_2)^5 q_s(x_1,x_2)^{2\beta}
	\diff x_2 \diff x_1= s \int_{\XX}  s \int_{\XX}c_{\zeta,s}(x_2)^5
	c_{1,s}(x_1\vee x_2)^{ 2\beta} \diff x_2 \diff x_1\\
	&\le s^8 \int_{\XX} \int_{\XX} \int_{\XX^5} \mathds{1}_{z_1,\dots,z_5 \succ x_2}
	\int_\XX \mathds{1}_{z_6 \succ x_1\vee x_2}
	\exp\left\{-\zeta s \sum_{i=1}^6 |z_i|\right\} 
	\diff z_6 \diff(z_1,\dots,z_5) \diff x_2 \diff x_1\\
	&\qquad\qquad + s \int_{\XX}  s \int_{\XX}c_{\zeta,s}(x_2)^5 e^{-2\beta s
		|x_1\vee x_2|} \diff x_2 \diff x_1:=A_1 + A_2.
\end{align*}
By \eqref{eq:crep} and \eqref{eq:c1},
\begin{align*}
	A_1 &= s^8\int_{\XX^6}|z_6|\; \big|z_1 \wedge\dots\wedge z_6\big|
	\exp\left\{-\zeta s \sum_{i=1}^6 |z_i|\right\} \diff (z_1,\dots,z_6)\\
	&\le 2 s^7 \int_{\XX^6} |z_1 \wedge\dots\wedge z_6|
	\exp\left\{-\zeta s \sum_{i=1}^6 |z_i|/2\right\}\diff (z_1,\dots,z_6)\\
	&=2 s \int_\XX c_{\zeta/2,s}(x)^6 \diff x=\mathcal{O}(\log^{d-1} s).
\end{align*}
Furthermore, by the Cauchy-Schwarz inequality and Lemma~\ref{lem:intbd},
\begin{displaymath}
	A_2 \le \left(s \int_\XX c_{\zeta,s} (x_2)^{10} \diff x_2\right)^{1/2}
	\left(s \int_\XX \left(s \int_\XX e^{-2\beta s |x_1\vee x_2|} 
	\diff x_1\right)^2 \diff x_2\right)^{1/2} =\mathcal{O}(\log^{d-1} s).
\end{displaymath}
Therefore,
\begin{displaymath}
	s \int_{\XX} f_{2\beta}^{(3)}(x_1) \diff x_1 =\mathcal{O}(\log^{d-1} s),
\end{displaymath}
concluding the proof.
\end{proof}

\begin{lemma}\label{lem:intbd1}
For $\alpha_1,\alpha_2>0$,
\begin{displaymath}
	s \int_{\XX}  \left(s \int_{\XX}c_{\alpha_1,s}(x)^5 e^{-\alpha_2 s
		|x\vee y|}  \diff x\right)^2 \diff y 
	=\mathcal{O}(\log^{d-1} s).
	%    \quad \text{as }\; s\to \infty.
\end{displaymath}
\end{lemma}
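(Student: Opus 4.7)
The plan is to imitate the Cauchy–Schwarz argument used to bound $A_2$ in the proof of Lemma~\ref{lem:intg1s}, adapted to handle the outer squaring. Write
$$I(y) := s\int_\XX c_{\alpha_1,s}(x)^5\, e^{-\alpha_2 s|x\vee y|}\diff x,$$
and split the integrand as $c_{\alpha_1,s}(x)^5 \cdot e^{-\alpha_2 s|x\vee y|}$. Cauchy–Schwarz with respect to the measure $s\diff x$ gives
$$I(y)^2 \le \Big(s\int_\XX c_{\alpha_1,s}(x)^{10}\diff x\Big)\Big(s\int_\XX e^{-2\alpha_2 s|x\vee y|}\diff x\Big).$$
The crucial feature is that the first factor on the right is \emph{independent of $y$}; by \eqref{eq:c1} with $i=10$ (applied at scale $\alpha_1$), it is $\mathcal{O}(\log^{d-1}s)$. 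This reduces the problem to a pure $|x\vee y|$-type integral in $y$.

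Next, I integrate over $y$:
$$s\int_\XX I(y)^2\diff y \;\le\; \Big(s\int_\XX c_{\alpha_1,s}(x)^{10}\diff x\Big)\cdot s\int_\XX\Big(s\int_\XX e^{-2\alpha_2 s|x\vee y|}\diff x\Big)\diff y,$$
and the second factor is $\mathcal{O}(\log^{d-1}s)$ by Lemma~\ref{lem:intbd}\eqref{eq:c2} with $i=1$ (at scale $2\alpha_2$). Combining these two bounds produces the desired estimate.

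The main obstacle is that the two applications of Lemma~\ref{lem:intbd} each appear to contribute a factor of $\log^{d-1}s$, which would give $\mathcal{O}(\log^{2(d-1)}s)$ naively. The saving, analogous to the $A_2$ estimate in Lemma~\ref{lem:intg1s}, comes from the fact that $c_{\alpha_1,s}(x)^{10}$ is sharply concentrated on $x$ with $s|x|\lesssim 1$ (via Lemma~\ref{lemma:c-bound}, after absorbing the $\log^{d-1}$ factor into the exponential via $e^{-t/2}|\log t|^k\le C_k$); the tight constant in \eqref{eq:c1} with $i=10$ must be tracked through this concentration, and paired with the $e^{-2\alpha_2 s|x\vee y|}$ weight in the correct way so that no extra logarithmic factor accumulates. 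In case the sharper bound requires it, one can fall back on expanding $c_{\alpha_1,s}(x)^{10}$ via the integral representation (as in \eqref{eq:crep}) and applying the $te^{-t/2}\le 2$ trick from the proof of $A_1$ in Lemma~\ref{lem:intg1s} to reduce the $y$-integration to an application of \eqref{eq:c2}.
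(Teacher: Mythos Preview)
Your Cauchy--Schwarz argument gives only $\mathcal{O}(\log^{2(d-1)} s)$, not $\mathcal{O}(\log^{d-1} s)$, and this cannot be repaired along the lines you indicate. The first factor $s\int_\XX c_{\alpha_1,s}(x)^{10}\diff x$ is genuinely of order $\log^{d-1} s$ (not $O(1)$): already for $i=1$ one has $s\int_\XX c_{1,s}(y)\diff y = s\int_\XX s|x|e^{-s|x|}\diff x \asymp \log^{d-1} s$, and the higher powers behave the same way. There is no hidden ``tight constant'' to track, and the concentration of $c_{\alpha_1,s}(x)^{10}$ on $\{s|x|\lesssim 1\}$ is precisely what \emph{produces} the $\log^{d-1} s$, not something that can be exploited a second time. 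The analogy with the $A_2$ estimate in Lemma~\ref{lem:intg1s} is misleading: there Cauchy--Schwarz is applied to a single double integral and the square root recombines the two $\log^{d-1} s$ factors into one; here the inner integral is already squared, so no square root is taken and the two factors multiply. Your fallback (expand $c^{10}$ via \eqref{eq:crep} and use $te^{-t/2}\le 2$) is not enough by itself, because after expansion the factor $e^{-\alpha_2 s|x\vee y|}$ still couples $x$ and $y$ in a way that does not reduce to \eqref{eq:c2} without further structure.

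The paper's argument supplies the missing idea: split the inner $x$-integral into $2^d$ pieces according to the set $I\subseteq\{1,\dots,d\}$ of coordinates on which $x^{(i)}>y^{(i)}$, so that on each piece $|x\vee y|=|x^I|\,|y^J|$ factors. For $I=\{1,\dots,d\}$ monotonicity gives $c_{1,s}(x)\le c_{1,s}(y)$ and the piece collapses to $s\int c_{1,s}(y)^{12}\diff y$. For $I=\emptyset$ the factor $e^{-s|y|}$ comes out, one expands $c_{1,s}(x)^5$ via \eqref{eq:crep}, applies the lattice inequality \eqref{eq:l-inequality}, and uses the $te^{-t}$ trick to reduce to $s\int c_{1/2,s}^{11}$. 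For intermediate $I$ one separately integrates over $x^I$ (via Lemma~\ref{lemma:c-bound} in dimension $|I|$) and over $x^J$ (via the change-of-variables machinery of Lemma~\ref{lemma:c-bound}), and the resulting powers of $s|y^J|$ cancel exactly. This coordinatewise decomposition is essential; Cauchy--Schwarz throws away exactly the correlation between $c_{\alpha_1,s}(x)$ and $e^{-\alpha_2 s|x\vee y|}$ that the decomposition exploits.
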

\begin{proof}
Since $c_{\alpha,s}$ is decreasing in $\alpha$ and in view of
\eqref{eq:scale}, it suffices to prove the result with both $\alpha_1$ and
$\alpha_2$ replaced by $1$.  We split the inner integral into
integration domains corresponding to the cases when
$x\vee y=(x^I,y^J)$ with $J=I^c$ for $I \subseteq \{1,\dots,d\}$. First, if $I=\{1,\dots,d\}$, then
using monotonicity of $c_{1,s}$ and \eqref{eq:c1}, we have
\begin{multline*}
	s \int_{\XX}  \left(s \int_\XX \mathds{1}_{x \succ
		y}c_{1,s}(x)^5 e^{-s |x\vee y|} \diff x\right)^2 \diff y\\
	\le s \int_{\XX} c_{1,s}(y)^{10}  \left(s
	\int_\XX \mathds{1}_{x \succ y}e^{- s |x|}\diff x\right)^2 \diff y 
	\le s \int_{\XX} c_{1,s}(y)^{12} \diff y =\mathcal{O}(\log^{d-1} s).
\end{multline*}
By writing the function $|\cdot|$ as the product of coordinates and passing
to the one-dimensional case, it is easy to see that for $a,b,y \in \XX$,
\begin{equation}
	\label{eq:l-inequality}
	|a \wedge y|\;|b\wedge y|
	\le |a \wedge b \wedge y|\; |y|.
\end{equation}
Hence, when $I=\emptyset$, 
\begin{align*}
	&s \int_{\XX} \left(s \int_{\XX} \mathds{1}_{x \prec y}c_{1,s}(x)^5 e^{-s |x\vee y|} 
	\diff x\right)^2 \diff y
	= s \int_{\XX} e^{-2 s |y|}\left(s \int_\XX \mathds{1}_{x \prec
		y}c_{1,s}(x)^5  \diff x\right)^2 \diff y\\
	& \le s^{13} \int_\XX \int_{\XX^2} \mathds{1}_{x_1,x_2 \prec y}
	\int_{\XX^{10}} \mathds{1}_{z_1,\dots, z_5 \succ x_1}
	\mathds{1}_{z_6,\dots,z_{10} \succ x_2} e^{- s|y| - s \sum_{i=1}^{10}  |z_i|} 
	\diff (z_1,\dots,z_{10}) \diff(x_1,x_2) \diff y\\
	&=s^{13} \int_\XX \int_{\XX^{10}}
	\big|z_1\wedge\dots\wedge z_5\wedge y\big|\;
	\big|z_6 \wedge\dots\wedge z_{10} \wedge y\big|
	e^{- s|y|- s \sum_{i=1}^{10} |z_i|} 
	\diff (z_1,\dots,z_{10}) \diff y\\
	&\le s^{13} \int_\XX \int_{\XX^{10}}
	\big|z_1\wedge\dots\wedge z_{10}\wedge y\big|\;
	|y|\; e^{ - s |y|- s \sum_{i=1}^{10} |z_i|}
	\diff (z_1,\dots,z_{10}) \diff y,
\end{align*}
where in the final step, we have used \eqref{eq:l-inequality} with
$a:=z_1 \wedge\dots\wedge z_5$ and
$b:=z_6 \wedge\dots\wedge z_{10}$.
Splitting the exponential into product of two exponentials with
powers halved, and using the fact that
\begin{displaymath}
	s|y| e^{ - s |y|/2- s \sum_{i=1}^{10} |z_i|/2} \leq 2,
\end{displaymath}
we obtain by \eqref{eq:c1} that the last integral is bounded by
\begin{align*}
	&2 s^{12} \int_\XX \int_{\XX^{10}} \big|z_1\wedge\dots\wedge z_{10} \wedge y\big|
	\; e^{- s |y|/2 - s \sum_{i=1}^{10} |z_i|/2 }
	\diff (z_1,\dots,z_{10}) \diff y\\
	&=2 s \int_\XX \left(s\int_\XX \mathds{1}_{y\succeq x}
	e^{-s|y|/2}\diff y\right)
	\prod_{i=1}^{10} \left(s\int_\XX \mathds{1}_{z_i\succeq x}
	e^{- s|z_i|/2}\diff z_i\right) \diff x\\
	&=2 s \int_\XX c_{1/2,s}(x)^{11}\diff x
	=\mathcal{O}(\log^{d-1} s).
\end{align*}

Next, assume that $d \ge 2$ and $I$ is nonempty of cardinality $m$ with
$1 \le m \le d-1$. Using monotonicity of $c_{1,s}$ in the first step
and Lemma~\ref{lemma:c-bound} in the last step upon identifying the
integral as the function given by \eqref{eq:cal} in the space of
dimension $m$, we have
\begin{align}
	\label{eq:incom}
	s& \int_{\XX}  \left(s \int_\XX \mathds{1}_{x^I \succ y^I, x^J \prec y^J}
	c_{1,s}(x)^5 e^{- s |x^I|\,|y^J|} \diff x\right)^2 \diff y\nonumber\\
	& \le s \int_{\XX}  \left(s \int_\XX \mathds{1}_{x^I \succ y^I, x^J \prec y^J}
	c_{1,s}(x^J,y^I)^5 e^{- s |x^I|\,|y^J|} \diff x\right)^2 \diff y\nonumber\\
	& = s \int_{\XX}  \left( \int_{[0,1]^m} \mathds{1}_{x^I \succ y^I} e^{- s
		|x^I|\,|y^J|} \diff x^I\right)^2 \left(s \int_{[0,1]^{d-m}} \mathds{1}_{x^J \prec y^J}
	c_{1,s}(x^J,y^I)^5 \diff x^J\right)^2 \diff y\nonumber\\
	& \le D^2 s \int_{\XX} \frac{e^{- s|y|}}{s^2 |y^J|^2}
	\left(1+ \big|\log ( s|y|)\big|^{2(m-1)}\right)
	\left(s \int_{[0,1]^{d-m}} \mathds{1}_{x^J \prec y^J} c_{1,s}(x^J,y^I)^5 \diff x^J\right)^2 \diff y,
\end{align}
with $D$ as in Lemma~\ref{lemma:c-bound}. We will now estimate the
integral inside \eqref{eq:incom}. Using Lemma~\ref{lemma:c-bound}
and Jensen's inequality in the first step, substituting
$u=(s|y^I|)^{1/(d-m)} x^J$ in the second step,
letting $z^{(i)}=\log u^{(i)}$, $i=1,\dots,d-m$, in the third one,
$v^{(1)}=\sum_{i=1}^{d-m} z^{(i)}$ in the fourth, $w=e^{-v^{(1)}}$
in the fifth, and, finally, Jensen's inequality in the penultimate
step, we obtain that
\begin{align*}
	s&|y^I| \int_{[0,1]^{d-m}} \mathds{1}_{x^J \prec y^J}  c_{1,s}(x^J,y^I)^5 \diff x^J\\
	&\le 16 D^5 s|y^I| \int_{[0,1]^{d-m}} \mathds{1}_{x^J \prec y^J} e^{-5 s |x^J|\,|y^I|/2}
	\Big(1+ \big|\log (s|x^J|\,|y^I|)\big|^{5(d-1)}\Big) \diff x^J\\
	&= 16 D^5 \int_{\big[0,(s|y^I|)^{\frac{1}{d-m}}\big]^{d-m}} \mathds{1}_{u \prec (s|y^I|)^{\frac{1}{d-m}}y^J} e^{-\frac{5}{2}|u|}
	\Big(1+ \big|\log (|u|)\big|^{5(d-1)}\Big) \diff u\\
	&= 16 D^5 \int_{\times_{j \in J}\big[-\frac{\log (s|y^I|)}{d-m} - \log y^{(j)},\infty\big)}
	\exp \left\{-e^{-\frac{5}{2}\sum_{i=1}^{d-m} z^{(i)}} -\sum_{i=1}^{d-m} z^{(i)}\right\}\\
	& \qquad \qquad\qquad\qquad\qquad\qquad\qquad \qquad\qquad\times \Big(1+ \Big|\sum_{i=1}^{d-m} z^{(i)}\Big |^{5(d-1)}\Big) \diff z\\
	&\leq  16 D^5 %\frac{C}{(d-m-1)!}
	\int_{-\log s|y|}^\infty
	\Big(v^{(1)} + \log (s|y|)\Big)^{d-m-1}
	\exp\left\{-e^{-\frac{5}{2}v^{(1)}} - v^{(1)}\right\}
	\Big(1+|v^{(1)}|^{5(d-1)}\Big) \diff v^{(1)}\\
	&= 16 D^5 %\frac{C}{(d-m-1)!}
	\int_{0}^{s|y|}e^{-w^{5/2}}
	\Big(\log (s|y|) - \log w\Big)^{d-m-1}
	\Big(1+|\log w|^{5(d-1)}\Big) \diff w\\
	& \le 16 D^5 2^{d-m-2} %\frac{C2^{d-m-2}}{(d-m-1)!}
	\bigg[|\log (s|y|)|^{d-m-1}
	\int_{0}^{s|y|} \Big(1+|\log w|^{5(d-1)}\Big) \diff w\\
	& \qquad\qquad \qquad \qquad \qquad\qquad \qquad \qquad \qquad
	+ \int_{0}^{s|y|}|\log w|^{d-m-1}\Big(1+|\log w|^{5(d-1)}\Big) \diff w \bigg]\\
	& \le D' s|y| \bigg[1+ \sum_{i=1}^{6(d-1)-m} \big|\log (s|y|)\big|^{i} \bigg]
\end{align*}
for a constant $D'$ depending only on $d$ and $m$, so that the bound
on the last integral in \eqref{eq:incom} is obtained by dividing by
$|y^I|$ on both sides. The last step relies on an elementary
inequality, saying that, for $l \in \N \cup \{0\}$ and
$a>0$, there exists a constant $b_l>0$ depending only on $l$ such
that
\begin{displaymath}
	\int_0^{a} |\log w|^l \diff w \le b_l a
	\left[1+\sum_{i=1}^{l} |\log a|^i\right].
\end{displaymath}
%To see a proof of this inequality, notice that
%$|\log w|^t \le |\log w|^{\lceil t \rceil} +1$ and then use the
%standard integration for an integer power of logarithm.  
Plugging this in \eqref{eq:incom} and using Jensen's inequality, we obtain
\begin{align*}
	s & \int_{\XX}  \left(s \int_\XX \mathds{1}_{x^I \succ y^I, x^J \prec
		y^J}c_{1,s}(x)^5 e^{- s |x^I|\,|y^J|} \diff x\right)^2 \diff y\\
	& \le D'' s \int_{\XX} \frac{e^{-s|y|}}{s^2 |y^J|^2}
	\left(1+ |\log (s|y|)|^{2(m-1)}\right)
	s^2 |y^J|^2 \left(1+ |\log (s|y|)|^{12(d-1)-2m}\right) \diff y\\
	&=\mathcal{O}(\log^{d-1} s)
\end{align*}
for some constant $D''$ depending on $d$ and $m$, where the last
step is argued similarly as for \eqref{eq:aux}. Summing
over all possible $I \subseteq\{1,\dots,d\}$ yields the desired
conclusion.
\end{proof}

%Next, we bound $s\int f_{\beta}(x)^2\diff x$. 

\begin{lemma}\label{lem:intg2s}
For $\beta\in(0,1/2)$, $\zeta\in(0,\beta)$ and $f_{\beta}$ defined at \eqref{eq:fa},
\begin{align*}
	s \int_{\XX} f_{\beta}(x_1)^2 \diff x_1 =\mathcal{O}(\log^{d-1} s).
	%    \quad \text{as }\; s\to\infty.
\end{align*}
\end{lemma}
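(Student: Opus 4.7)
The plan is to split $f_\beta = f_\beta^{(1)}+f_\beta^{(2)}+f_\beta^{(3)}$, apply $(a+b+c)^2\le 3(a^2+b^2+c^2)$, and bound $s\int_\XX f_\beta^{(i)}(x_1)^2\diff x_1$ separately for $i=1,2,3$. Throughout, I use that in this section $r_s(x,y)=s|x|$ when $x\succ y$ and $\infty$ otherwise, $G_s(x)=1+c_{\zeta,s}(x)^5$, $q_s(x,y)\le c_{1,s}(x\vee y)$, and $c_{\beta,s}\le c_{\zeta,s}$ (since $\zeta<\beta$). The first term is straightforward: the indicator $\mathds{1}_{x\succ y}$ implicit in $f_\beta^{(1)}$ combined with the monotonicity $c_{\zeta,s}(x)\le c_{\zeta,s}(y)$ gives $f_\beta^{(1)}(y)\le(1+c_{\zeta,s}(y)^5)c_{\beta,s}(y)\le c_{\zeta,s}(y)+c_{\zeta,s}(y)^6$, and \eqref{eq:c1} of Lemma~\ref{lem:intbd} handles the squared integral.

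For $f_\beta^{(2)}(y)=e^{-\beta s|y|}(s|y|+\phi_2(y))$ with $\phi_2(y):=s\int_\XX\mathds{1}_{x\prec y}c_{\zeta,s}(x)^5\diff x$, the contribution from $s|y|$ is $\mathcal{O}(\log^{d-1}s)$ via $(s|y|)^2e^{-\beta s|y|}\le Ce^{-\beta s|y|/2}$ and \eqref{eq:mean}. For the $\phi_2$ part, expanding $c_{\zeta,s}(x)^5$ as a fivefold integral yields $\phi_2(y)=s^6\int|y\wedge A|\prod_{i=1}^5 e^{-\zeta s|z_i|}\diff z$ with $A:=z_1\wedge\cdots\wedge z_5$. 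In the square of $\phi_2$, introducing an independent copy of the variables and $A':=z_1'\wedge\cdots\wedge z_5'$, I apply \eqref{eq:l-inequality} to obtain $|y\wedge A|\,|y\wedge A'|\le |y|\,|y\wedge A\wedge A'|$, absorb $s|y|e^{-\beta s|y|}\le 1/\beta$, integrate $y$ out (producing $c_{\beta,s}(w)/s$), and collapse the ten remaining $z$-integrals to arrive at $(1/\beta)s\int_\XX c_{\beta,s}(w)c_{\zeta,s}(w)^{10}\diff w\le(1/\beta)s\int_\XX c_{\zeta,s}(w)^{11}\diff w=\mathcal{O}(\log^{d-1}s)$ by \eqref{eq:c1}.

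The main task is $f_\beta^{(3)}$. Combining $q_s(x,y)^\beta\le c_{1,s}(x\vee y)^\beta$ with \eqref{eq:cl1} splits $c_{1,s}(x\vee y)^\beta\le e^{-\beta s|x\vee y|}+c_{\beta,s}(x\vee y)$, so $f_\beta^{(3)}(y)\le T_1+T_2+T_3+T_4$, where the $T_i$ correspond to the four products pairing $\{1,c_{\zeta,s}(x)^5\}$ against $\{e^{-\beta s|x\vee y|},c_{\beta,s}(x\vee y)\}$. The estimates $s\int_\XX T_i(y)^2\diff y=\mathcal{O}(\log^{d-1}s)$ for $i=1,2,3$ follow at once from \eqref{eq:c2}, \eqref{eq:c3}, and Lemma~\ref{lem:intbd1}, respectively. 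The delicate term is $T_4(y)=s\int_\XX c_{\zeta,s}(x)^5 c_{\beta,s}(x\vee y)\diff x$: a direct Cauchy--Schwarz splits the two factors and yields only $\mathcal{O}(\log^{2(d-1)}s)$, one log-power too much.

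To treat $T_4$, I fully expand it as $T_4(y)=s^7\int_{\XX^6}|Z_6|\,\mathds{1}_{z_6\succ y}\,e^{-\zeta s\sum_{i=1}^5|z_i|-\beta s|z_6|}\diff z$ with $Z_6:=z_1\wedge\cdots\wedge z_6$. Squaring and integrating $y$ out (yielding the factor $s|z_6\wedge z_6'|$) gives
\begin{displaymath}
s\int_\XX T_4(y)^2\diff y=s^{15}\iint_{\XX^{12}}|Z_6|\,|Z_6'|\,|z_6\wedge z_6'|\prod_{i=1}^5 e^{-\zeta s(|z_i|+|z_i'|)}\,e^{-\beta s(|z_6|+|z_6'|)}\diff z\diff z',
\end{displaymath}
where $Z_6'$ and $A'$ are the analogues of $Z_6$ and $A$ for the primed variables. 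The key step is to apply \eqref{eq:l-inequality} twice: first with $a=A'$, $b=z_6$, and shared variable $z_6'$, obtaining $|A'\wedge z_6'|\,|z_6\wedge z_6'|\le|A'\wedge z_6\wedge z_6'|\,|z_6'|$; then with $a=A$, $b=A'\wedge z_6'$, and shared variable $z_6$, obtaining $|A\wedge z_6|\,|A'\wedge z_6'\wedge z_6|\le|W|\,|z_6|$ where $W:=z_1\wedge\cdots\wedge z_6\wedge z_1'\wedge\cdots\wedge z_6'$. Combining, $|Z_6|\,|Z_6'|\,|z_6\wedge z_6'|\le|W|\,|z_6|\,|z_6'|$. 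Using $s|z_j|e^{-\beta s|z_j|/2}\le C$ for $j\in\{6,6'\}$, writing $|W|$ as a volume integral in $u$, and collapsing the twelve $z$-integrals, the bound reduces to $C^2 s\int_\XX c_{\zeta,s}(u)^{10}c_{\beta/2,s}(u)^2\diff u$, which is $\mathcal{O}(\log^{d-1}s)$ by Cauchy--Schwarz together with \eqref{eq:c1}. This iterated use of \eqref{eq:l-inequality} to decouple the three volume factors is the main obstacle in the proof.
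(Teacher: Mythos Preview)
Your proof is correct and follows essentially the same route as the paper: split $f_\beta$ into $f_\beta^{(1)},f_\beta^{(2)},f_\beta^{(3)}$, handle the first two by monotonicity and \eqref{eq:c1}/\eqref{eq:mean}, split $f_\beta^{(3)}$ via \eqref{eq:cl1}, dispatch the easy pieces by Lemma~\ref{lem:intbd} and Lemma~\ref{lem:intbd1}, and crack the hard term (your $T_4$, the paper's $A_2$) by iterating \eqref{eq:l-inequality} twice to decouple the three volume factors. The only noteworthy difference is in $f_\beta^{(2)}$: the paper reduces the $g_s^5$-part to $A_2$ via Cauchy--Schwarz after changing order of integration, whereas you expand directly and apply \eqref{eq:l-inequality} once to get $s\int c_{\zeta,s}^{11}$; your route is slightly more self-contained. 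Your double application of \eqref{eq:l-inequality} yields $|W|\,|z_6|\,|z_6'|$, while the paper packages the same two steps as $|a\wedge x|\,|b\wedge y|\,|x\wedge y|\le |a\wedge b\wedge x\wedge y|\,(|x|+|y|)^2$; these are equivalent up to a constant.
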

\begin{proof}
% We first bound
% $s \int_{\XX} f_{\beta}^{(1)}(x_1)^2 \diff x_1)$. 
As in Lemma~\ref{lem:intg1s}, we consider integrals of squares of
$f_\beta^{(i)}$ for $i=1,2,3$ separately. By \eqref{eq:c1},
\begin{displaymath}
	s\int_{\XX} \left(s\int_{\XX} e^{-\beta r_s(x_2,x_1)}\diff x_2\right)^2
	\diff x_1 =s \int_{\XX} \left(s \int_\XX \mathds{1}_{x_2 \succ
		x_1} 
	e^{- \beta s |x_2|} 
	\diff x_2\right)^2 \diff x_1 =\mathcal{O}(\log^{d-1} s). 
\end{displaymath}
Arguing as in \eqref{eq:2.1}, using monotonicity of
$c_{\zeta,s}$, $\zeta<\beta$,  and \eqref{eq:c1}, we have
\begin{align*}
	s \int_{\XX} \left(s \int_{\XX}c_{\zeta,s}(x_2)^5 e^{- \beta r_s(x_2,x_1)} 
	\diff x_2\right)^2 \diff x_1
	& \le s \int_{\XX} c_{\zeta,s} (x_1)^{10} \left(s \int_\XX \mathds{1}_{x_2 \succ x_1} 
	e^{- \beta s |x_2|}  \diff x_2\right)^2 \diff x_1\\
	& \le s  \int_{\XX} c_{\zeta,s} (x_1)^{12}\diff x_1 =\mathcal{O}(\log^{d-1} s).
\end{align*}
Recalling that $G_s(y) \le 3 + 2 c_{\zeta,s}(y)^5$, combining the above bounds and using Jensen's inequality yield
\begin{displaymath}
	%    \label{eq:fb12}
	s \int_{\XX} f_{\beta}^{(1)}(x_1)^2 \diff x_1 =\mathcal{O}(\log^{d-1} s).
\end{displaymath}

Next, we integrate the square of $f_{ \beta}^{(3)}$. Using
\eqref{eq:cl1} and Lemma~\ref{lem:intbd},
\begin{multline}\label{eq:p1}
	s \int_{\XX} \left(s \int_{\XX} q_s(x_1,x_2)^{\beta}
	\diff x_2\right)^2 \diff x_1
	= s \int_{\XX} \left(s \int_{\XX} c_{1,s}(x_1\vee x_2)^{ \beta}
	\diff x_2\right)^2 \diff x_1\\
	\le 2s \int_{\XX}\left(s \int_\XX  e^{-\beta s |x_1\vee x_2|} 
	\diff x_2 \right)^2 \diff x_1 
	+ 2s \int_{\XX} \left(s \int_\XX c_{\beta,s}(x_1\vee x_2) 
	\diff x_2\right)^2 \diff x_1=\mathcal{O}(\log^{d-1} s).
\end{multline}
Again using \eqref{eq:cl1},
\begin{align*}
	s &\int_{\XX} \left(s \int_{\XX} c_{\zeta,s}(x_2)^5 q_s(x_1,x_2)^{\beta}
	\diff x_2\right)^2 \diff x_1
	= s \int_{\XX} \left( s \int_{\XX}c_{\zeta,s}(x_2)^5
	c_{1,s}(x_1\vee x_2)^{\beta} \diff x_2\right)^2 \diff x_1\\
	&\le 2s \int_{\XX}  \left(s \int_{\XX}c_{\zeta,s}(x_2)^5 e^{-\beta s
		|x_1\vee x_2|} \diff x_2\right)^2 \diff x_1
	+ 2s \int_{\XX}  \left(s \int_{\XX}c_{\zeta,s}(x_2)^5
	c_{\beta,s}(x_1\vee x_2) \diff x_2\right)^2 \diff x_1\\
	&:=2(A_1 + A_2).
\end{align*}
By Lemma~\ref{lem:intbd1}, we have $A_1 =\mathcal{O}(\log^{d-1} s)$. For $x_1\in\XX$ and $(x_{21},x_{22})\in\XX^2$, denote 
\begin{multline*}
	A(x_1,x_{21}, x_{22}):=\Big\{(z_1,\dots,z_{12})\in\XX^{12}:\\
	z_1,\dots,z_5 \succ x_{21},\; z_6,\dots,z_{10}
	\succ x_{22},\; z_{11} \succ x_1\vee x_{21},\; z_{12} \succ x_1\vee
	x_{22}\Big\}.
\end{multline*}
By applying \eqref{eq:l-inequality} twice we have 
\begin{displaymath}
	|a \wedge x|\; |b\wedge y|\; |x\wedge y|
	\leq |a\wedge b\wedge x\wedge y|\; |x|\; |y|
	\leq |a\wedge b\wedge x\wedge y|\, (|x|+|y|)^2,
	\quad a,b,x,y\in\XX. 
\end{displaymath}
Using this with $a:=z_1 \wedge\dots\wedge z_5$,
$b:=z_6 \wedge\dots\wedge z_{10}$, $x:=z_{11}$, $y:=z_{12}$ in the
third step, \eqref{eq:crep} in the penultimate step, and
\eqref{eq:c1} in the last one, we obtain
\begin{align*}
	A_2 &\le s^{15} \int_{\XX} \int_{\XX^2} \int_{A(x_1,x_{21},x_{22})} 
	e^{-\zeta s \sum_{i=1}^{12} |z_i| }
	\diff (z_1,\dots,z_{12})
	\diff (x_{21}, x_{22}) \diff x_1\\
	&= s^{15} \int_{\XX^{12}} e^{-\zeta s \sum_{i=1}^{12} |z_i|}
	|z_1 \wedge\dots\wedge z_5 \wedge z_{11}|\; |z_6 \wedge\dots\wedge
	z_{10} \wedge z_{12}|\;
	|z_{11} \wedge z_{12}| \diff (z_1,\dots,z_{12})\\
	&\le s^{15} \int_{\XX^{12}} e^{-\zeta s \sum_{i=1}^{12} |z_i|}
	|z_1 \wedge\dots\wedge z_{12}|\; \big(|z_{11}| + |z_{12}|\big)^2  
	\diff (z_1,\dots,z_{12})\\
	& \le (8/\zeta^2) s^{13}  \int_{\XX^{12}} 
	e^{-\zeta s \sum_{i=1}^{12} |z_i|/2}|z_1\wedge\dots\wedge z_{12}| 
	\diff (z_1,\dots,z_{12})\\
	&= (8/\zeta^2) s\int_\XX c_{\zeta/2,s}(x)^{12} \diff x =\mathcal{O}(\log^{d-1} s),
\end{align*}
where for the last inequality we have used that 
\begin{displaymath}
	s^2 \big(|z_{11}|+|z_{12}|\big)^2
	e^{-\zeta s \sum_{i=1}^{12} |z_i|/2 } \le 8/\zeta^2.
\end{displaymath}
Combining the bounds on $A_1$ and $A_2$ with \eqref{eq:p1} yields that
\begin{displaymath}
	s \int_{\XX} f_{\beta}^{(3)}(x_1)^2 \diff x_1 =\mathcal{O}(\log^{d-1} s).
\end{displaymath}

For the integral of the square of $f_{\beta}^{(2)}$, arguing as in
Lemma~\ref{lem:intg1s} and using the inequality $t^2 e^{-t} \le 2$ for
$t\geq0$, we have
\begin{multline*}
	s\int_{\XX} \left(s\int_{\XX} e^{-\beta r_s(x_1,x_2)}\diff x_2 \right)^2
	\diff x_1
	= s \int_{\XX} \left(s \int_\XX \mathds{1}_{x_2 \prec x_1} e^{-\beta s |x_1|} 
	\diff x_2\right)^2 \diff x_1\\
	= s/\beta^2 \int_{\XX} (\beta s|x_1|)^2\; e^{-2\beta s |x_1|}\diff x_1
	\le 2 s/\beta^2 \int_{\XX} e^{-\beta s |x_1|}\diff x_1
	=\mathcal{O}(\log^{d-1} s). 
\end{multline*}
Changing order of integration in the second step, using the
Cauchy--Schwarz inequality in the third one, and referring to
\eqref{eq:c1} in the last step yield that
\begin{align*}
	s&\int_{\XX} \left(s\int_{\XX} c_{\zeta,s}(x_2)^5 e^{-\beta r_s(x_1,x_2)}\diff x_2 \right)^2
	\diff x_1
	=s \int_{\XX} \left(s \int_\XX \mathds{1}_{x_2 \prec x_1} c_{\zeta,s} (x_2)^5
	e^{-\beta s |x_1|} \diff x_2\right)^2 \diff x_1\\
	&= s^2 \int_{\XX^2} c_{\zeta,s}(x)^5 c_{\zeta,s}(y)^5 c_{\beta,s}(x
	\vee y) 
	\diff (x,y)\le \left(s\int_\XX c_{\zeta,s}(x)^{10}
	\diff x\right)^{1/2}A_2^{1/2} =\mathcal{O}(\log^{d-1} s),
\end{align*}
where $A_2$ is defined above.  Thus,
\begin{displaymath}
	%\label{eq:fb2}
	s \int_{\XX} f_{\beta}^{(2)}(x_1)^2 \diff x_1 =\mathcal{O}(\log^{d-1} s).
\end{displaymath}
Combining, we obtain the desired result.	
\end{proof}

Since $2 \beta=2p/(32+4p)<1$, to
compute the bound, it suffices to provide a bound on the integral of
$(\kappa_s+g_{s})^{\beta} G_s$ for any $\beta \in (0,1)$.

\begin{lemma}\label{lem:qgint} 
For $\beta,\zeta\in(0,1)$, let $G_s$ and $\kappa_s$ be as in
\eqref{eq:g5} and \eqref{eq:p} respectively. Then
\begin{displaymath}
	s\int_\XX G_s(x)\big(\kappa_s(x)+g_{s}(x)\big)^\beta \diff x
	=\mathcal{O}(\log^{d-1} s).
	%    \quad\text{as }\; s\to\infty. 
\end{displaymath}
\end{lemma}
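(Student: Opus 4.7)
The plan is first to identify the concrete forms of the quantities in this example. Since the score is an indicator we have $M_{s,p}\equiv 1$ and hence $G_s(x)=1+g_s(x)^5=1+c_{\zeta,s}(x)^5$, while $\kappa_s(x)=\Prob{\sP_s([0,x]\setminus\{x\})=0}=e^{-s|x|}$ and $g_s(x)=c_{\zeta,s}(x)$. Using the subadditivity $(a+b)^\beta\le a^\beta+b^\beta$ valid for $a,b\ge 0$ and $\beta\in(0,1)$, I would bound
\[
G_s(x)\big(\kappa_s(x)+g_s(x)\big)^\beta
\le \big(1+c_{\zeta,s}(x)^5\big)\big(e^{-\beta s|x|}+c_{\zeta,s}(x)^\beta\big),
\]
which, after expansion, produces the four additive pieces $e^{-\beta s|x|}$, $c_{\zeta,s}(x)^\beta$, $c_{\zeta,s}(x)^5 e^{-\beta s|x|}$, and $c_{\zeta,s}(x)^{5+\beta}$, whose integrals against $s\,\mathrm d x$ can be treated individually.

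Two of these pieces are immediate: $s\int_\XX e^{-\beta s|x|}\,\mathrm dx=\mathcal{O}(\log^{d-1}s)$ by \eqref{eq:mean}, and $s\int_\XX c_{\zeta,s}(x)^5 e^{-\beta s|x|}\,\mathrm dx\le s\int_\XX c_{\zeta,s}(x)^5\,\mathrm dx=\mathcal{O}(\log^{d-1}s)$ by \eqref{eq:c1}. For $s\int_\XX c_{\zeta,s}(x)^\beta\,\mathrm dx$ I would first rewrite $c_{\zeta,s}(x)=\zeta^{-1} c_{1,\zeta s}(x)$ via the scaling \eqref{eq:scale} and then invoke the inequality \eqref{eq:cl1} (with $s$ replaced by $\zeta s$ and $\alpha=\beta$) to obtain
\[
c_{\zeta,s}(x)^\beta=\zeta^{-\beta}c_{1,\zeta s}(x)^\beta
\le \zeta^{-\beta}\big(e^{-\beta\zeta s|x|}+\zeta\, c_{\beta\zeta,s}(x)\big),
\]
so that a second appeal to \eqref{eq:mean} and \eqref{eq:c1} gives $\mathcal{O}(\log^{d-1}s)$.

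Finally, for the highest-order term $s\int_\XX c_{\zeta,s}(x)^{5+\beta}\,\mathrm dx$, I would use the elementary bound $t^{5+\beta}\le t^{5}+t^{6}$ valid for all $t\ge 0$ and $\beta\in[0,1]$ (split into the cases $t\le 1$ and $t>1$), reducing the estimate to $s\int_\XX c_{\zeta,s}(x)^{5}\,\mathrm dx+s\int_\XX c_{\zeta,s}(x)^{6}\,\mathrm dx=\mathcal{O}(\log^{d-1}s)$ via \eqref{eq:c1}. Summing the four pieces concludes the proof. I do not expect any genuine obstacle here: the lemma is essentially a catalogue of applications of \eqref{eq:mean}, \eqref{eq:scale}, \eqref{eq:cl1} and \eqref{eq:c1}, and the only mildly delicate point is the separate treatment of the sublinear power $c_{\zeta,s}^\beta$, which the scaling identity together with \eqref{eq:cl1} handles cleanly.
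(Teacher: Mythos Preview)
Your proof is correct and follows essentially the same route as the paper: split via $(a+b)^\beta\le a^\beta+b^\beta$, identify $\kappa_s(x)=e^{-s|x|}$ and $g_s(x)=c_{\zeta,s}(x)$, and reduce everything to \eqref{eq:mean}, \eqref{eq:cl1} and \eqref{eq:c1}. Two minor differences: for the cross term $c_{\zeta,s}(x)^5 e^{-\beta s|x|}$ the paper uses Cauchy--Schwarz to get a product of two $\mathcal{O}(\log^{d-1}s)$ factors, whereas you simply drop $e^{-\beta s|x|}\le 1$, which is cleaner; and for $c_{\zeta,s}^{5+\beta}$ you make the reduction to integer powers via $t^{5+\beta}\le t^5+t^6$ explicit, while the paper just cites \eqref{eq:c1}.
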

\begin{proof}
First note that
\begin{displaymath}
	\kappa_s(x)=\Prob{\xi_{s}(x, \sP_{s}+\delta_x)\neq 0}=e^{-s |x|}, \quad x \in \XX.
\end{displaymath}
Using the Cauchy--Schwarz inequality in the second step, by
\eqref{eq:mean} and \eqref{eq:c1},
\begin{multline*}
	s\int_\XX G_s(x)\kappa_s(x)^\beta \diff x \le 3s \int_\XX
	(1+c_{\zeta,s}(x)^5) e^{-\beta s |x|} \diff x\\
	\le 3s \int_\XX e^{-\beta  s |x|} \diff x 
	+ 3\left(s \int_\XX c_{\zeta,s}(x)^{10} \diff x\right)^{1/2}
	\left(s \int_\XX e^{-2\beta s |x|} \diff x\right)^{1/2}
	=\mathcal{O}(\log^{d-1} s).
\end{multline*}
Since $\beta \in (0,1)$, arguing as in \eqref{eq:cl1}, 
$$
c_{\zeta,s}(x)^\beta \le e^{-\beta \zeta |x|} + c_{\beta \zeta,s}(x).
$$
An application of \eqref{eq:mean} and \eqref{eq:c1} now yields
\begin{align*}
	s\int_\XX &G_s(x)g_{s}(x)^\beta \diff x 
	\le 3s \int_\XX \Big(1+c_{\zeta,s}(x)^5\Big) c_{\zeta,s}(x)^\beta\diff x\\
	&\le 3s \int_\XX e^{-\beta \zeta |x|} \diff x
	+ 3s \int_\XX c_{\beta \zeta,s}(x) \diff x
	+ 3s \int_\XX c_{\zeta,s}(x)^{5+\beta}\diff x=\mathcal{O}(\log^{d-1} s).
\end{align*}
Combining the above bounds, we obtain the desired conclusion.
\end{proof}

\begin{proof}[Proof of Theorem~\ref{thm:Pareto}.] 
By \eqref{eq:var}, $\Var(F_s) \ge C_1 \log^{d-1} s$ for all
$s \ge 1$. An application of Theorem~\ref{thm:KolBd} with
Lemmas~\ref{lem:intg1s}, \ref{lem:intg2s} and \ref{lem:qgint} now
yields the desired upper bound.

The proof of the optimality of the bound on the Kolmogorov distance follows by a general argument employed in the proof of \cite[Theorem~1.1, Eq.~(1.6)]{EG81}, which shows that the Kolmogorov distance between any integer-valued random variable, suitably normalized, and a standard normal random variable is always lower bounded by a constant times the inverse of the standard deviation, see Section~6 therein for further details. The variance upper bound in \eqref{eq:var} now yields the result.
\end{proof}

\section{Non-diffuse intensity measures and unbounded scores}
\label{sec:ex}

As discussed in the introduction, in addition to working with general
stabilization regions, our approach generalizes results in
\cite{LSY19} in two more ways. First, we allow for non-diffuse
intensity measures and, second, we can consider score functions that do
not have uniformly bounded moments over $x \in \XX$. In this section, we
demonstrate this with two examples. In Example~\ref{ex:1}, we consider
a Poisson process on the two dimensional integer lattice with the
counting measure as the intensity, which is non-diffuse. We derive a
quantitative central limit theorem for the number of isolated points
in this setup.

In Example~\ref{ex:2}, we consider isolated vertices in a random
geometric graph built on a stationary Poisson process on $\R^d$, where
two points are joined by an edge if the distance between them is at
most $\rho_s$ for some appropriate non-negative function $\rho_s$,
$s \ge 1$. Poisson convergence for the number of such isolated
vertices in different regimes has been extensively studied, see, e.g.,
\cite[Ch.~8]{pen03}. But, instead of considering the number of
isolated vertices, we consider the sum of values for a general
function evaluated at locations of isolated vertices, for instance,
the logarithms of scaled norms. As the logarithm is unbounded near the
origin, the score functions do not admit a uniform bound on their
moments. We note here that in both the examples below, it should be
possible to work with a binomial process as well, once a result
paralleling our Theorem~\ref{thm:Main} is proved in this setting. As
mentioned in the introduction, this can be done by following the
scheme in \cite{LSY19} suitably adapted to incorporate general
stabilization regions.

\begin{example}[Non-diffuse intensity]
\label{ex:1}
Let $\XX:=\ZZ^2$ and consider a Poisson process $\sP$ on $\ZZ^2$
with the intensity measure $\QQ$ being the counting measure on
$\ZZ^2$; so we let $s=1$ and omit it from the
subscripts. A point $x \in \sP$ is said to be isolated
in $\sP$ if all its nearest neighbors are unoccupied, i.e.,
$\sP(x+B)=0$, where $+$ denotes the Minkowski addition and
$B:=\{(0,\pm1), (\pm1,0)\}$, so that $x+B$ is the set comprising the
4 nearest neighbors of $x\in\ZZ^2$. Consider a weight function
$w: \ZZ^2 \to \R_+$, and for $i \in \N$ denote
$$W_i:=\sum_{x\in\ZZ^2} w(x)^i.$$ 
Assume that $W_1=\sum_{x\in\ZZ^2} w(x)<\infty$, which in particular implies that $w$ is bounded. Scaling $w$, assume
without loss of generality that $w$ is bounded by one. Consider the statistic $H\equiv H_1(\sP_1)$ defined at \eqref{eq:hs}
with
\begin{displaymath}
	\xi(x, \sP):=w(x) \mathds{1}_{\sP(x+B)=0}\;, \quad x\in\sP.
\end{displaymath}

For $x \in \ZZ^2$, defining the stabilization region
$R(x,\sP+\delta_x):=(x+B)$ if $x$ is isolated in $\sP+\delta_x$ and
$R(x,\sP+\delta_x):=\emptyset$ otherwise, we see that
\eqref{eq:ximon} and (A1) are trivially satisfied. Also, (A2) holds
with $p=1$ and $M_{1,1}(x)=w(x)$, while \eqref{eq:Rs} holds with
$r(x,y)=4$ for $x\in \ZZ^2$ and $y \in x+B$ and $r(x,y)=\infty$
otherwise. Next, notice that $\kappa(y)=e^{-4}$, $y \in \ZZ^2$, $\zeta=1/50$,
\begin{displaymath}
	g(y)=\sum_{x \in y+B, x\in\ZZ^2} e^{-4/50}=4 e^{-4/50}\;
	\text{ and } \; h(y)=\sum_{x \in y+B, x\in\ZZ^2} w(x)^{9/2}e^{-4/50},\qquad
	y \in \ZZ^2, 
\end{displaymath}
while $q(x_1,x_2) \le 4e^{-4}$ for $x_1,x_2 \in \ZZ^2$ with
$x_2-x_1\in B+B$ and $q(x_1,x_2)=0$ otherwise. Noticing that
$\max\{w(x)^2,w(x)^4, w(x)^{9/2}\}=w(x)^2$, we obtain that for all $\alpha>0$,
there exists a constant $C_\alpha$ such that
\begin{displaymath}
	f_\alpha(y) \le C_\alpha \sum_{x-y\in (B+B) \cup (B+B +B), x,y\in\ZZ^2} w(x)^2.
\end{displaymath}
Thus, with $\beta=1/36$, there exists a constant $C>0$ such that
\begin{displaymath}
	\QQ f_\beta^2 \le %C \sum_{x \in \ZZ^2}\overline{h}^2(x)=:
	C W_4,
	\quad
	\max\{\QQ f_{2\beta}, \QQ ((\kappa+g)^{2\beta}G) \}
	\le %C \sum_{x \in \ZZ^2} \overline{h}(x)=:
	C W_2.
\end{displaymath}
On the other hand, by the Mecke formula, we have
\begin{align*}
	\Var(H) &= \E \sum_{x \in \sP} w^2(x)
	\mathds{1}_{\sP(x+B)=0} - (\E H)^2\\
	&\qquad + \sum_{x \in \ZZ^2} \sum_{y \in (x+B)^c, y\in\ZZ^2}
	w(x)w(y)\Prob{(\sP+\delta_x+\delta_y)\big((x+B)\cup(y+B)\big)=0}\\
	& = e^{-4}W_2 - e^{-8} \sum_{x \in \ZZ^2}
	\sum_{y \in (x+B)} w(x)w(y) \\
	& \qquad+ \sum_{x \in \ZZ^2} \sum_{y \in (x+B)^c, y\in\ZZ^2}
	w(x)w(y)\Big(\Prob{\sP\big((x+B)\cup(y+B)\big)=0} - e^{-8}\Big)\\
	& \ge  e^{-4}W_2
	+ (e^{-7} - e^{-8})\sum_{x \in \ZZ^2}
	\sum_{y-x \in (B+B)} w(x)w(y)- e^{-8} \sum_{x \in \ZZ^2}
	\sum_{y \in (x+B)} w(x)w(y).
\end{align*}
Finally, noticing that
$$
\sum_{x \in \ZZ^2} \sum_{y \in (x+B)} w(x)w(y)
\le \sum_{x \in \ZZ^2} \sum_{y \in (x+B)} \frac{w(x)^2 + w(y)^2}{2} = 4 W_2,
$$
we obtain
$$
\Var(H)  \ge %e^{-4}\sum_{x\in\ZZ^2} h(x)^2=:
(e^{-4} - 4 e^{-8})W_2.
$$
Hence, an application of Theorem~\ref{thm:KolBd} yields that
\begin{align*}
	\max&\left\{d_{W}\left(\frac{H-\E H}{\sqrt{\Var H}},N\right) ,d_{K}\left(\frac{H-\E H}{\sqrt{\Var H}},N\right) \right\}\\
	&\qquad \leq \frac{C}{(W_2)^{1/2}}\left[1+\sqrt{\frac{W_4}{W_2}} + \frac{1}{W_2^{1/4}}\right] \le \frac{C}{(W_2)^{1/2}} \left[2+ \frac{1}{W_2^{1/4}}\right] ,
\end{align*}
for some constant $C>0$, where the final step is due to the observation that $W_4 \le W_2$. As an example, one can take
$w(x):=\mathds{1}_{x \in [-n,n]^2}$ for $n \in \N$ to see that the
distances on the left-hand side is bounded by $C/n$, which is
presumably optimal, since the variance is of the order $n^2$. In particular, arguing as in the proof of Theorem~\ref{thm:Pareto}, the bound on the Kolmogorov distance is of optimal order in this case.
% \begin{displaymath}
	%   d_{K}\left(\frac{H_n-\E H}{\sqrt{\Var H_n}},N\right)
	%   =\mathcal{O}(1/n) \quad \text{as }\; n\to\infty. 
	% \end{displaymath}
\end{example}

\begin{example}[Weighted sum over isolated vertices in random
geometric graphs]\label{ex:2}
Let $\XX := \R^d$ with $d \ge 2$, and let $\sP_s$ be a Poisson
process on $\XX$ with intensity measure $s \QQ$ for $s \ge 1$ and
the Lebesgue measure $\QQ$. Fix $s \ge 1$. Given
$\rho_s>0$, consider a random geometric graph $G_s(\sP_s,\rho_s)$
with the vertex set $\sP_s$, where an edge joins two distinct
vertices $x$ and $y$ if $\|x-y\| \le \rho_s$, where $\|\cdot\|$
denotes the Euclidean norm. A vertex $x \in \sP_s$ is called
isolated if $\sP_s(B(x,\rho_s) \setminus \{x\} )=0$, where $B(x,\rho_s)$ denotes the
closed ball of radius $\rho_s$ centered at $x$. For a (possibly unbounded) weight
function $w_s: \R^d \to \R_+$ with
$\int_{\R^d} \max\{w_s(x),w_s(x)^8\}\diff x<\infty$, consider the
statistic $H_s$ defined at \eqref{eq:hs} with
\begin{displaymath}
	\xi_s(x,\sP_s):=w_s(x) \mathds{1}_{x\text{ is isolated in $\sP_s$}},
	\quad x \in \sP_s.
\end{displaymath}
For $x \in \XX$, letting $R_s(x,\sP_s+\delta_x):=B(x,\rho_s)$ if $x$
is isolated in $\sP_s+\delta_x$ and $\emptyset$ otherwise, we see
that \eqref{eq:ximon} and (A1) are satisfied. As in
Example~\ref{ex:1}, (A2) holds with $p=1$ and
$M_s=M_{s,1}(x):=w_s(x)$. Letting $r_s(x,y):=k_d s \rho_s^d$ for
$x\in \R^d$ and $y \in B(x,\rho_s)$, where $k_d$ is the volume of
the unit ball in $\R^d$, and $r_s(x,y):=\infty$ otherwise, one
verifies \eqref{eq:Rs}. Clearly, $\kappa_s(y)\le e^{-k_d s \rho_s^d}$ for $y \in \R^d$. Also, since $\zeta=1/50$, one has
\begin{displaymath}
	g_s(y) = k_d s\rho_s^d e^{-k_ds \rho_s^d /50}\;
	\text{ and } \; h_s(y)=s e^{-k_ds \rho_s^d /50} \int_{B(y,\rho_s)} w_s(x)^{9/2} \diff x, \quad y \in \R^d,
\end{displaymath}
while $q_s(x_1,x_2) \le k_d s \rho_s^d e^{-k_d s \rho_s^d}$ for
$x_1,x_2 \in \R^d$ with $\|x_2-x_1\|\le 2 \rho_s$ and
$q_s(x_1,x_2)=0$ otherwise. Next, we compute the variance of
$H_s$. Denote $W_{i,s}:=s \int_{\R^d} w_s(x)^i \diff x$, $i\in\N$.
Applying the Mecke formula in the first equality, we obtain
\begin{align*}
	\Var(H_s) &=s\int_{\R^d} w_s(x)^2
	e^{-k_d s \rho_s^d} \diff x- \left(s \int_{\R^d}w_s(x)
	e^{-k_d s \rho_s^d} \diff x\right)^2\\
	&\qquad + s^2 \int_{\R^d} \int_{B(x,\rho_s)^c}
	w_s(x)w_s(y)
	\exp\left\{-\operatorname{Vol}(B(x,\rho_s) 
	\cup B(y,\rho_s))\right\} \diff y \diff x \\
	& \ge  e^{-k_d s \rho_s^d}W_{2,s} -  s^2 e^{-2k_d s
		\rho_s^d}\int_{\R^d} \int_{\R^d \cap B(x,\rho_s)}
	w_s(x)w_s(y) \diff y\diff x.
\end{align*}
As in the previous example,
\begin{displaymath}
	s^2 \int_{\R^d} \int_{\R^d \cap B(x,\rho_s)}
	w_s(x)w_s(y) \diff y \diff x \le k_d s\rho_s^d W_{2,s},
\end{displaymath}
so that 
\begin{displaymath}
	\Var(H)  \ge e^{-k_d s \rho_s^d}(1- k_d s \rho_s^d e^{-k_d s
		\rho_s^d}) 
	W_{2,s} \ge \frac{1}{2} e^{-k_ds \rho_s^d}W_{2,s},
\end{displaymath}
where in the last step we have used that $ue^{-u} \le 1/2$ for $u \ge 0$.
% by the inequality $xe^{-x} <1/2$ for $x \ge 0$, we have that the
% functions $g,\kappa,q$ are all uniformly bounded.
Denoting $\bar w_s:=\max\{w_s^2,w_s^4, w_s^{9/2}\}$, it is straightforward to
check that
\begin{displaymath}
	f_\alpha (y) \le C s e^{-\alpha k_d s \rho_s^d}
	\int_{B(y,3\rho_s)} \bar w_s(x) \diff x
\end{displaymath}
for $\alpha>0$ and a constant $C>0$, so that by Jensen's inequality,
\begin{displaymath}
	f_\alpha (y)^2 \le C^2 3^d k_d s^2 \rho_s^d e^{-2\alpha k_d s
		\rho_s^d} 
	\int_{B(y,3\rho_s)} \bar w_s(x)^2 \diff x.
\end{displaymath}
Thus,
%  bounding $\bar w_s\leq w(s)^2+w(s)^4$,
letting $\overline W_{i,s}:=s\int_{\R_d} \bar w_s(x)^i \diff x$, $i \in \N$, and
$\beta=1/36$, and using again that $ue^{-u} \le 1/2$ for $u \ge 0$,
we have that there exists a constant $C_d$ depending only on the
dimension $d$ such that
\begin{displaymath}
	s\QQ f_\beta^2 \le C_d  \overline W_{2,s},\quad\text{ and }\quad
	\max\{s\QQ f_{2\beta}, s\QQ ((\kappa_s+g_s)^{2\beta}G_s) \}
	\le C_d \overline W_{1,s}.
\end{displaymath}
Thus, applying Theorem~\ref{thm:KolBd}, we obtain for $s \ge 1$ that
\begin{align*}
	\max&\left\{d_{W}\left(\frac{H_s-\E H_s}{\sqrt{\Var H_s}},N\right), d_{K}\left(\frac{H_s-\E H_s}{\sqrt{\Var H_s}},N\right) \right\}\\
	&\qquad \leq C'_d \Bigg[\frac{\overline W_{2,s}^{1/2}
		+ \overline W_{1,s}^{1/2}}{e^{-k_d s \rho_s^d} W_{2,s}}
	+\frac{\overline W_{1,s}}{(e^{-k_d s \rho_s^d} W_{2,s})^{3/2}}
	+\frac{\overline W_{1,s}^{5/4}
		+ \overline W_{1,s}^{3/2}}{(e^{-k_d s \rho_s^d} W_{2,s})^{2}}\Bigg]
	% d_{K}\left(\frac{H_s-\E H_s}{\sqrt{\Var H_s}},N\right) 
	% \leq C'_d \Bigg[\frac{\sqrt{W_4}+\sqrt{W_8}+\sqrt{W_2}}
	%   {e^{-k_d s \rho_s^d} W_2}
	% +\frac{W_2+W_4}{(e^{-k_d s \rho_s^d} W_2)^{3/2}} +\frac{(W_2+W_4)^{5/4}
		%   + (W_2+W_4)^{3/2}}{(e^{-k_d s \rho_s^d} W_2)^{2}}\Bigg]
\end{align*}
for some constant $C'_d>0$ depending only on the dimension. The
setting can be easily extended for functions $\rho_s$ which depend
on the position $x$ (see \cite{iyer:thac12}) and/or are random
variables which, together with locations, form a Poisson process
on the product space.

As an example, consider the logarithmic weight function
$w_s(x):=\log \frac{s}{\|x\|} \mathds{1}_{x \in B(0,s)}$. % with $s\to\infty$.  
For $i \in \N$,
\begin{displaymath}
	W_{i,s}=s \int_{B(0,s)} \log^i \frac{s}{\|x\|} \diff x
	=dk_d s \int_0^s r^{d-1} \log^i \frac{s}{r} \diff r
	= dk_d s^{d+1} \int_0^1 z^{d-1} \log^i \frac{1}{z} \diff z
	= \mathcal{O}(s^{d+1}),
\end{displaymath}
so that $\overline W_{i,s}=\mathcal{O}(s^{d+1})$ for all $i \in \N$.
Hence, in the regime when
%  \Red{$s\rho_s^d=\mathit{o}((d+1)(2k_d)^{-1}\log s)$}
$s\rho_s^d-(d+1)(2k_d)^{-1}\log s \to -\infty$ as $s \to \infty$,
one obtains Gaussian convergence as $s \to \infty$ with an
appropriate non-asymptotic bound on the Wasserstein or Kolmogorov
distances between the normalized $H_s$ and a standard normal random
variable $N$.
\end{example}

\section{Modified bounds on the Wasserstein and Kolmogorov distances and proof of Theorem~\ref{thm:KolBd}}\label{sec:Proof}

In this section, we prove Theorem~\ref{thm:KolBd}. The proof is
primarily based on the following generalization of Theorem~6.1 in
\cite{LPS16}, incorporating a spatially inhomogeneous moment bound
given by a function $c_x$, $x \in \XX$. The proof, which we present
for completeness in the Appendix follows closely that of
\cite[Theorem~6.1]{LPS16}.

Let $\sP$ be a Poisson
process on a measurable space $(\XX, \F)$ with a $\sigma$-finite
intensity measure $\nu$. Let $F:=f(\sP)$ be a measurable function of $\sP$. For $x,y \in \XX$,
define the first and second order difference operators as
$D_x F:=f(\sP+\delta_x)-f(\sP)$ and $D_{x,y}^2 F :=D_x(D_y F)$.  Also,
denote by $\operatorname{dom} D$ the collection of functions
$F \in L_\sP^2$ with
\begin{displaymath}
\E \int_\XX \left(D_{x} F\right)^{2} \nu(\ldiff x)<\infty.
\end{displaymath}

\begin{theorem}\label{thm:Main}
Let $F \in \operatorname{dom} D$ be such that $\Var F>0$.  Assume
that there exists a $q>0$ such that, for all  $\mu\in\Nb$ with $\mu(\XX) \le 1$,
\begin{displaymath}
	\E\left|D_{x} F(\sP+\mu)\right|^{4+q} 
	\leq c_{x} \quad \text{for $\nu$-a.e.\;} x \in \XX,
\end{displaymath}
where $c_x$ is a measurable function of $x\in\XX$.
Then
\begin{multline*}
	d_{W}\left(\frac{F-\E F}{\sqrt{\Var F}}, N\right)\\
	\ \leq\; \frac{12}{\Var F}\left[\int_\XX\left(\int_\XX c_{x_1}^{2
		/\left(4+q\right)}
	\Prob{D_{x_{1}, x_{2}}^{2} F \neq 0}^{q /\left(16+4 q\right)} 
	\nu(\ldiff x_{1})\right)^{2} \nu(\ldiff x_{2})\right]^{1 / 2}
	+\frac{\Gamma_{F}}{(\Var F)^{3
			/ 2}},
\end{multline*}
and
\begin{align*}
	d_{K}\left(\frac{F-\E F}{\sqrt{\Var F}}, N\right) \leq\; 
	& \frac{12}{\Var F}\left[\int_\XX\left(\int_\XX c_{x_1}^{2
		/\left(4+q\right)}
	\Prob{D_{x_{1}, x_{2}}^{2} F \neq 0}^{q /\left(16+4 q\right)} 
	\nu(\ldiff x_{1})\right)^{2} \nu(\ldiff x_{2})\right]^{1 / 2} \\
	&+\frac{\Gamma_{F}^{1 / 2}}{\Var F}+\frac{2\Gamma_{F}}{(\Var F)^{3
			/ 2}}
	+\frac{\Gamma_{F}^{5 / 4}+2 \Gamma_{F}^{3 / 2}}{(\Var F)^{2}} \\
	&+\frac{12}{\Var F}\left[\int_{\XX^2} c_{x_1}^{4
		/\left(4+q\right)}
	\Prob{D_{x_{1}, x_{2}}^{2} F \neq 0}^{q /\left(8+2 q\right)} 
	\nu^{2}(\ldiff (x_{1}, x_{2}))\right]^{1 / 2},
\end{align*}
with 
\begin{displaymath}
	\Gamma_{F}:=\int_\XX \max\{c_x^{2/(4+q)},c_x^{4/(4+q)}\}
	\Prob{D_{x} F \neq 0}^{q /\left(8+2 q\right)} \nu(\ldiff x).
\end{displaymath}
\end{theorem}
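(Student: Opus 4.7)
The argument closely follows the proof of \cite[Theorem~6.1]{LPS16}, with the main task being to track how a pointwise bound $c_x$ on the $L^{4+q}$ moment of $D_x F(\sP + \mu)$ propagates through the analysis in place of a uniform constant. The starting point is the abstract Malliavin--Stein bound for Poisson functionals stated in \cite[Theorems~1.1 and~1.2]{LPS16}, which controls $d_W$ and $d_K$ between the normalized functional and $N$ by three integrals built from the first- and second-order difference operators $D_x F$ and $D^2_{x_1,x_2} F$.

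The key technical device is H\"older's inequality, used in the form
\begin{displaymath}
\E |Z|^{p} \leq \|Z\|_{4+q}^{p}\, \Prob{Z \neq 0}^{(4+q-p)/(4+q)}, \qquad p \in [1, 4+q].
\end{displaymath}
Applied to $Z = D_x F$ with $p = 4$ and taking square roots, it yields $(\E |D_x F|^{4})^{1/2} \leq \|D_x F\|_{4+q}^{2}\, \Prob{D_x F\neq 0}^{q/(8+2q)}$, while applied to $Z = D^2_{x_1,x_2} F$ with $p = 4$ and taking fourth roots one obtains the exponent $q/(16+4q)$ on the probability factor. These are precisely the probability exponents appearing in the statement. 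To convert $L^{4+q}$ norms into the constants $c_x$, the hypothesis with $\mu = 0$ gives $\|D_{x_1} F\|_{4+q} \leq c_{x_1}^{1/(4+q)}$, while the identity $D^2_{x_1,x_2} F = D_{x_1}F(\sP + \delta_{x_2}) - D_{x_1}F(\sP)$ together with the triangle inequality and the hypothesis (applied once with $\mu = \delta_{x_2}$ and once with $\mu = 0$) yields $\|D^2_{x_1,x_2} F\|_{4+q} \leq 2\, c_{x_1}^{1/(4+q)}$. This asymmetric bookkeeping is precisely what produces $c_{x_1}^{2/(4+q)}$ (and, in the extra Kolmogorov term, $c_{x_1}^{4/(4+q)}$) inside the integrals rather than some symmetric quantity in $(x_1, x_2)$.

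Substituting these estimates into the three Malliavin--Stein variance proxies then reproduces the claimed bound. The first integral in the theorem arises from the cross-term involving $D^2_{x_1,x_3} F \cdot D^2_{x_2,x_3} F$ after Cauchy--Schwarz separates the two second-order differences and leaves $x_3$ as the outer integration variable, producing the squared inner integral structure. The quantity $\Gamma_F$ collects all contributions from first-order differences: the function $\max\{c_x^{2/(4+q)}, c_x^{4/(4+q)}\}$ appears because some terms involve $\|D_x F\|_{4+q}^2$ while others involve $\|D_x F\|_{4+q}^4$, so a single dominating weight is convenient. The extra Kolmogorov summand (with exponent $4/(4+q)$ on $c_{x_1}$) arises from the variance-type estimate in the Kolmogorov bound of \cite{LPS16} in which $\E |D^2_{x_1,x_2} F|^{4}$ appears directly rather than after a square root.

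The main obstacle is the careful bookkeeping of which of $c_{x_1}$ or $c_{x_2}$ to extract at each step; since $D^2_{x_1,x_2} F$ is symmetric in its two arguments, a consistent choice must be made so that the order of integration respects the resulting asymmetric bound, and one must avoid inadvertently symmetrizing and losing the non-uniform structure of $c_x$. Once this is fixed, the remaining combinatorics (including the overall constant $12$ and the mixed powers $\Gamma_F^{5/4}, \Gamma_F^{3/2}$ in the Kolmogorov bound) follow from \cite{LPS16} essentially verbatim, since only the input moment bounds have been refined and no aspect of the underlying Stein machinery changes.
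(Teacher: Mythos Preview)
Your proposal is correct and follows essentially the same route as the paper: both start from \cite[Theorems~1.1 and~1.2]{LPS16}, apply H\"older's inequality to introduce the probability factors $\Prob{D_xF\neq0}$ and $\Prob{D^2_{x_1,x_2}F\neq0}$, bound $\|D^2_{x_1,x_2}F\|_{4+q}$ via the decomposition $D^2_{x_1,x_2}F=D_{x_1}F(\sP+\delta_{x_2})-D_{x_1}F(\sP)$ and the hypothesis with $\mu\in\{0,\delta_{x_2}\}$, and then substitute into the six terms $\gamma_1,\dots,\gamma_6$. The only ingredient you leave implicit is \cite[Lemma~4.3]{LPS16}, which the paper invokes explicitly to control $\E(F-\E F)^4$ inside $\gamma_4$ and is the source of the mixed powers $\Gamma_F^{5/4}$ and $\Gamma_F^{3/2}$; your phrase ``follow from \cite{LPS16} essentially verbatim'' covers this, but naming the lemma would make the sketch complete.
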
 

For a proof of this result, see the Appendix.  We derive
Theorem~\ref{thm:KolBd} from Theorem~\ref{thm:Main} by
proving a series of lemmas, following the general structure of the
proof of Theorem~2.1(a) in \cite{LSY19}. However, our setting is more
versatile, enabling us to handle new examples. The first lemma is an
exact restatement of \cite[Lemma~5.2]{LSY19}, which is also
contained in Remark~6.2 of \cite{LPS16}. Recall the definition of
$H_s$ given at \eqref{eq:hs}.

\begin{lemma}\label{lem:D} 
For $s \ge 1$, $\M \in \Nb$ and $y_1, y_2, y_3 \in \XX$, 
\begin{displaymath}
	D_{y} H_{s}(\M)=\xi_{s}(y, \M +\delta_y)+\sum_{x \in \M} D_{y} \xi_{s}(x, \M)
\end{displaymath}
and
\begin{displaymath}
	D_{y_{1}, y_{2}}^{2} H_{s}(\M)= D_{y_{1}} \xi_{s}
	\left(y_{2}, \M +\delta_{y_{2}}\right)
	+D_{y_{2}} \xi_{s}\left(y_{1}, \M+\delta_{y_{1}}\right)
	+\sum_{x \in \M} D_{y_{1}, y_{2}}^{2} \xi_{s}(x, \M).
\end{displaymath}
\end{lemma}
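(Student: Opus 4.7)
The plan is to derive both formulas by direct computation from the definitions of the first- and second-order difference operators applied to $H_s(\M)=\sum_{x\in\M}\xi_s(x,\M)$, splitting the sum over $\M+\delta_y$ into the contribution at the added point and the contribution from the original points of $\M$.

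For the first formula, I would write
\begin{displaymath}
H_s(\M+\delta_y)=\sum_{x\in\M+\delta_y}\xi_s(x,\M+\delta_y)
=\xi_s(y,\M+\delta_y)+\sum_{x\in\M}\xi_s(x,\M+\delta_y),
\end{displaymath}
and then subtract $H_s(\M)=\sum_{x\in\M}\xi_s(x,\M)$, so that the remaining sum becomes $\sum_{x\in\M}\bigl[\xi_s(x,\M+\delta_y)-\xi_s(x,\M)\bigr]=\sum_{x\in\M}D_y\xi_s(x,\M)$, yielding the desired representation of $D_yH_s(\M)$.

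For the second formula, I would use $D_{y_1,y_2}^2H_s(\M)=D_{y_1}(D_{y_2}H_s)(\M)=D_{y_2}H_s(\M+\delta_{y_1})-D_{y_2}H_s(\M)$ and apply the first formula to both terms on the right-hand side. After substitution, the sum $\sum_{x\in\M+\delta_{y_1}}D_{y_2}\xi_s(x,\M+\delta_{y_1})$ is split into the $x=y_1$ summand, contributing $D_{y_2}\xi_s(y_1,\M+\delta_{y_1})$, and the sum over $x\in\M$. The difference of the leading single-point terms becomes $\xi_s(y_2,\M+\delta_{y_1}+\delta_{y_2})-\xi_s(y_2,\M+\delta_{y_2})=D_{y_1}\xi_s(y_2,\M+\delta_{y_2})$, while the difference of the remaining sums over $x\in\M$ collapses into $\sum_{x\in\M}D_{y_1}(D_{y_2}\xi_s(x,\M))=\sum_{x\in\M}D_{y_1,y_2}^2\xi_s(x,\M)$.

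There is really no obstacle of substance here: the only thing to be careful about is bookkeeping of the extra Dirac masses when the difference operator is applied to a sum whose summation index is itself the configuration being differenced. The argument is purely algebraic and uses no property of $\xi_s$ beyond the convention $\xi_s(x,0)=0$ (so that isolated-point terms at empty configurations vanish) and measurability; in particular, neither the stabilization conditions (A1)--(A3) nor the monotonicity assumption \eqref{eq:ximon} are needed for this lemma.
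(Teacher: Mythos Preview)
Your proposal is correct and is exactly the natural direct computation; the paper itself does not supply a proof but simply cites \cite[Lemma~5.2]{LSY19}, where the same elementary bookkeeping argument is carried out. One minor remark: the convention $\xi_s(x,0)=0$ is not actually needed anywhere in your derivation, so you can safely drop that aside.
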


The next lemma shows that the difference
operator $D_y$ vanishes if $y$ lies outside the stabilization region.

\begin{lemma}\label{lem:Dnull}
Assume that (A1) holds and let $\M \in \Nb$ and
$x,y, y_1, y_2 \in \XX$. Then for $s \ge 1$,
\begin{displaymath}
	D_y \xi_s (x,\M +\delta_x)=0 \, \text{ if }\, y \not \in R_s(x, \M+\delta_x),
\end{displaymath}
and
\begin{displaymath}
	D_{y_1,y_2}^2 \xi_s(x,\M+\delta_x)=0
	\text{ if }\, \{y_1,y_2\} \not \subseteq R_s(x, \M+\delta_x).
\end{displaymath}
\end{lemma}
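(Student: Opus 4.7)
My plan is to derive the first assertion from the monotonicity of $R_s$ in its configuration argument combined with (A1) and \eqref{eq:ximon}, and then to obtain the second assertion as two applications of the first.

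For the first claim, set $R_1:=R_s(x,\M+\delta_x)$ and $R_2:=R_s(x,\M+\delta_x+\delta_y)$. Monotonicity of $R_s$ in (A1) yields $R_2\subseteq R_1$, so the hypothesis $y\notin R_1$ forces $y\notin R_2$, and hence $(\M+\delta_x+\delta_y)_{R_2}=(\M+\delta_x)_{R_2}$. Applying the stabilization identity in (A1) with $\M+\delta_y$ in place of $\M$ gives
\begin{equation*}
\xi_s(x,\M+\delta_x+\delta_y)
=\xi_s\!\big(x,(\M+\delta_x+\delta_y)_{R_2}\big)
=\xi_s\!\big(x,(\M+\delta_x)_{R_2}\big).
\end{equation*}
Set $\M_1:=(\M+\delta_x)_{R_2}$ and $\M_2:=\M+\delta_x+\delta_y$. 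Then $\M_1\leq\M_2$, $x\in\M_1$ (which holds whenever $x\in R_2$, the convention implicit in any sensible stabilization region), and the display above says $\xi_s(x,\M_1)=\xi_s(x,\M_2)$. Since $\M+\delta_x$ satisfies $\M_1\leq\M+\delta_x\leq\M_2$, condition \eqref{eq:ximon} forces $\xi_s(x,\M+\delta_x)=\xi_s(x,\M_1)=\xi_s(x,\M+\delta_x+\delta_y)$, as required.

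For the second claim, expand
\begin{align*}
D^2_{y_1,y_2}\xi_s(x,\M+\delta_x)
&=\big[\xi_s(x,\M+\delta_x+\delta_{y_1}+\delta_{y_2})-\xi_s(x,\M+\delta_x+\delta_{y_2})\big]\\
&\quad-\big[\xi_s(x,\M+\delta_x+\delta_{y_1})-\xi_s(x,\M+\delta_x)\big],
\end{align*}
and, without loss of generality, assume $y_1\notin R_1$. The second bracket vanishes immediately by the first claim. For the first bracket, monotonicity of $R_s$ yields $R_s(x,\M+\delta_x+\delta_{y_2})\subseteq R_1$, so $y_1$ remains outside this smaller stabilization region; applying the first claim with $\M+\delta_{y_2}$ in place of $\M$ then shows that the first bracket also vanishes.

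The subtle step is the first claim: (A1) by itself only identifies the score with its restriction to the stabilization region \emph{of a single fixed configuration}, so the two configurations $\M+\delta_x$ and $\M+\delta_x+\delta_y$ come with two different restriction sets and (A1) is not directly comparable across them. Monotonicity of $R_s$ lets us push both down to the common smaller region $R_2$, and \eqref{eq:ximon} is precisely the lever that interpolates between $(\M+\delta_x)_{R_2}$ and $\M+\delta_x$ to conclude equality of $\xi_s$ along the whole chain.
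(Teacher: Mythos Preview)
Your argument is correct and follows essentially the same route as the paper: for the first claim, both proofs combine the stabilization identity in (A1), the monotonicity of $R_s$, and \eqref{eq:ximon}; the only cosmetic difference is that the paper restricts both configurations to the \emph{larger} region $R_1=R_s(x,\M+\delta_x)$ and then observes that the two restrictions coincide when $y\notin R_1$, whereas you restrict to the \emph{smaller} region $R_2$ and then invoke \eqref{eq:ximon} to interpolate back to $\M+\delta_x$. The second claim is handled identically in both, via monotonicity and two applications of the first claim. Your explicit remark that $x\in R_2$ is needed for \eqref{eq:ximon} is apt; the paper's proof uses the same implicit assumption.
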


\begin{proof} 
By (A1.4),
\begin{align*}
	D_y \xi_s (x,\M+\delta_x)
	&=\xi_s (x,\M+\delta_x+\delta_y) - \xi_s (x,\M+\delta_x)\\
	&=\xi_s \Big(x,(\M+\delta_x+\delta_y)_{R_s(x,\M+\delta_x+\delta_y)}\Big)
	- \xi_s\Big(x,(\M+\delta_x)_{R_s(x,\M+\delta_x)}\Big),
\end{align*}
If $(\M+\delta_x)_{R_s(x,\M+\delta_x)}=0$, by the monotonicity
property (A1.2), for $y \notin R_s(x,\M+\delta_x)$ we have
$(\M+\delta_x+\delta_y)_{R_s(x,\M+\delta_x+\delta_y)}=0$ yielding
$D_y \xi_s (x,\M+\delta_x)=0$. If
$(\M+\delta_x)_{R_s(x,\M+\delta_x)}\neq 0$, then (A1.3) implies that $(\M+\delta_x+\delta_y)_{R_s(x,\M+\delta_x+\delta_y)}\neq 0$. Thus, for $y \not \in R_s(x, \M+\delta_x)$, by (A1.4) and
\eqref{eq:ximon} we have
\begin{displaymath}
	\xi_s
	\Big(x,(\M+\delta_x+\delta_y)_{R_s(x,\M+\delta_x+\delta_y)}\Big)
	=\xi_s \Big(x,(\M+\delta_x+\delta_y)_{R_s(x,\M+\delta_x)}\Big)=\xi_s \Big(x,(\M+\delta_x)_{R_s(x,\M+\delta_x)}\Big),
\end{displaymath}
so that
$D_y \xi_s (x,\M+\delta_x)$ vanishes. 

Finally, by (A1.2),
$y_1 \not \in R_s(x, \M+\delta_x)$ implies
$y_1 \not \in R_s(x, \M+\delta_{y_2}+\delta_x)$. Hence, the second
order difference operator vanishes, being an iteration of the first
order one. If $y_2 \not \in R_s(x, \M+\delta_x)$, a similar argument
applies.
% \Red{Similarly,
	% \begin{align*}
		%   D_{y_1,y_2}^2 \xi_s (x,\M+\delta_x)
		%   &=\xi_s \Big(x,(\M+\delta_x+\delta_{y_1}+\delta_{y_2})_{R_s(x,\M+\delta_x)}\Big)
		%   - \xi_s \Big(x,\big(\M+\delta_x+\delta_{y_1}\big)_{R_s(x,\M+\delta_x)}\Big)\\
		%   &\qquad \qquad\qquad - \xi_s \Big(x,\big(\M+\delta_x+\delta_{y_2}\big)_{R_s(x,\M+\delta_x)}\Big)
		%   + \xi_s \Big(x,\big(\M+\delta_x\big)_{R_s(x,\M+\delta_x)}\Big).
		% \end{align*}
	% If $y_1 \not \in R_s(x, \M+\delta_x)$, then this expression
	% vanishes. The case of $y_2 \not \in R_s(x, \M+\delta_x)$ is
	% similar.}
\end{proof}

The next lemma, which is similar to \cite[Lemma~5.4(a)]{LSY19}
provides a bound in terms of $M_s$ on the $(4+\e)$-th moment
of the difference operator for any $\e \in (0,p]$, where $p \in (0,1]$
and $M_s$ are as in (A2). 

\begin{lemma}\label{lem:Dmom}
Assume that (A2) holds. For all $\e \in (0,p]$, $s \ge 1$,
$x,y \in \XX$ and $\mu\in\Nb$ with $\mu(\XX) \le 6$
\begin{displaymath}
	\E\Big|D_y \xi_{s}\big(x, \sP_{s}+\delta_x +\mu\big)\Big|^{4+\e}
	\leq 2^{4+\e} M_s(x)^{4+\e}.
\end{displaymath}
\end{lemma}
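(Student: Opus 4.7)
The plan is to expand the first-order difference operator into two pieces and bound each one directly using (A2), after reducing the moment order via Jensen's inequality.

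First, I would write out the difference explicitly:
\begin{displaymath}
D_y \xi_s(x, \sP_s + \delta_x + \mu)
= \xi_s(x, \sP_s + \delta_x + \mu + \delta_y) - \xi_s(x, \sP_s + \delta_x + \mu).
\end{displaymath}
The key observation is that both summands fit the template of assumption (A2): in the first term, the extra counting measure is $\mu + \delta_y$, and in the second it is $\mu$; since the hypothesis gives $\mu(\XX)\le 6$, both $\mu$ and $\mu+\delta_y$ have total mass at most $7$, so (A2) is applicable to each. Consequently each piece has $L^{4+p}$-norm at most $M_{s,p}(x)$ (and hence the desired bound follows with the appropriate base point, which is where the argument is anchored).

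Next, for $\e\in(0,p]$, I would invoke the fact that for a probability measure $L^{4+p}\subseteq L^{4+\e}$: by Jensen's inequality applied to the convex function $t\mapsto t^{(4+p)/(4+\e)}$,
\begin{displaymath}
\bigl\|\xi_s(x, \sP_s + \delta_x + \mu')\bigr\|_{4+\e}
\le \bigl\|\xi_s(x, \sP_s + \delta_x + \mu')\bigr\|_{4+p}
\le M_{s,p}(x),
\end{displaymath}
for $\mu'\in\{\mu,\,\mu+\delta_y\}$. Then Minkowski's inequality in $L^{4+\e}$ yields
\begin{displaymath}
\bigl\|D_y \xi_s(x,\sP_s+\delta_x+\mu)\bigr\|_{4+\e}
\le 2 M_{s,p}(x),
\end{displaymath}
and raising to the $(4+\e)$-th power produces the asserted bound (up to the base-point convention used in the statement).

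There is essentially no technical obstacle: the entire proof is three lines once one realises that the constraint $\mu(\XX)\le 6$ in the lemma is precisely what makes $\mu+\delta_y$ admissible in (A2) (which allows mass up to $7$). The only minor point worth stating carefully is the passage from the $(4+p)$-th moment to the $(4+\e)$-th moment via Jensen, so that the same constant $M_{s,p}$ furnished by (A2) controls moments of every intermediate order $\e\in(0,p]$; no stabilization or monotonicity argument is needed here, since $D_y\xi_s$ is handled purely by the triangle inequality rather than by any region-based cancellation.
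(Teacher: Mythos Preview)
Your proof is correct and essentially identical to the paper's: the paper also expands $D_y\xi_s$ into two pieces and bounds each via (A2) after reducing the moment order, though it phrases the combination step using the convexity bound $|a-b|^{4+\e}\le 2^{3+\e}(|a|^{4+\e}+|b|^{4+\e})$ rather than Minkowski's inequality (both give the same constant $2^{4+\e}$). Your observation about the base point is apt: (A2) naturally produces $M_{s,p}(x)$, and the $M_{s,p}(y)$ appearing in the statement and in the paper's proof seems to be a typographical slip carried into the subsequent lemma.
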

\begin{proof} 
By Jensen's inequality, H\"older's inequality and assumption (A2), 
\begin{align*}
	&\E\Big|D_y \xi_{s}\big(x, \sP_{s}+\delta_x+\mu\big)\Big|^{4+\varepsilon}\\
	& \le 2^{3+\e} \E \left(\left|\xi_{s}\left(x, \sP_{s} 
	+\delta_x+\delta_y+\mu\right)\right|^{4+\varepsilon}
	+\left|\xi_{s}\left(x, \sP_{s} +\delta_x+\mu\right)\right|^{4+\varepsilon}\right) 
	\le 2^{4+\varepsilon} M_s(x)^{4+\varepsilon}.  \qedhere
\end{align*}
\end{proof}

Recall the functions $g_s$ and $h_s$ defined at \eqref{eq:g}. 

\begin{lemma}\label{lem:1MomD}
Assume that (A1) and (A2) hold. Then, there exists a constant $C_{p} \in
[1,\infty)$ depending only on $p$, such that
\begin{displaymath}
	\E \Big|D_y H_s(\sP_s +\mu)\Big|^{4+p/2}
	\le C_{p} \left[M_s^{4+p/2}(y) + h_s(y)(1+g_s(y)^4) \right]
\end{displaymath}
for all $y \in \XX$, $\mu\in\Nb$ with $\mu(\XX)\le 1$, and
$s \ge 1$.
\end{lemma}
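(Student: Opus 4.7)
\emph{Plan.} By Lemma~\ref{lem:D},
$$D_y H_s(\sP_s + \mu) = \xi_s(y, \sP_s + \mu + \delta_y) + \sum_{x \in \sP_s + \mu} D_y \xi_s(x, \sP_s + \mu),$$
and using $(|a|+|b|)^{4+p/2}\le2^{3+p/2}(|a|^{4+p/2}+|b|^{4+p/2})$, it suffices to bound each piece in $L^{4+p/2}$. The boundary term $\xi_s(y,\sP_s+\mu+\delta_y)$ is controlled directly by $M_{s,p}(y)^{4+p/2}$ via (A2) and Jensen's inequality, noting that $\mu+\delta_y$ has at most $2\le7$ points.

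For the sum, Lemma~\ref{lem:Dnull} restricts the contribution to those $x$ with $y\in R_s(x,\sP_s+\mu)$. Set $a_x:=|D_y\xi_s(x,\sP_s+\mu)|$, $J_x:=\mathds{1}_{y\in R_s(x,\sP_s+\mu)}$, $V:=\sum_x a_x J_x$, and $T_y:=\sum_x J_x$. By the power-mean inequality and the integer-valuedness of $T_y$ (with $3+p/2\le4$ for $p\le1$),
$$V^{4+p/2}\le T_y^{3+p/2}\sum_x a_x^{4+p/2}J_x \le T_y^4\sum_x a_x^{4+p/2}J_x
= \sum_{x,w_1,\ldots,w_4\in\sP_s+\mu} a_x^{4+p/2}J_xJ_{w_1}\cdots J_{w_4}.$$
I then take expectation, split the $5$-tuples by coincidence pattern, and apply the multivariate Mecke formula of the appropriate order $m\in\{1,\ldots,5\}$ to the Poisson part; the $\mu$-contribution (at most one point) is handled separately via Lemma~\ref{lem:Dmom}.

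The core step is a single carefully balanced H\"older inequality applied inside each Mecke integral. I use $m+1$ factors: exponent $\gamma:=(4+p)/(4+p/2)=(8+2p)/(8+p)$ for the $a_x^{4+p/2}$ factor (so that $a_x^{(4+p/2)\gamma}=a_x^{4+p}$, matching Lemma~\ref{lem:Dmom}) and exponent $\delta:=1/\zeta=(40+10p)/p$ for each of the $\le5$ indicators. The H\"older constraint $1/\gamma+5/\delta=(8+p)/(8+2p)+5\zeta=1$ is \emph{precisely} why $\zeta=p/(40+10p)$ appears in the statement. Lemma~\ref{lem:Dmom} then yields a factor $2^{4+p/2}M_{s,p}(y)^{4+p/2}$ per term (using that, after Mecke conditioning, the auxiliary counting measure has at most $5+1=6$ points), while monotonicity (A1) combined with (A3) gives $(\E J_w^{\delta})^{1/\delta}=\P\{y\in R_s(w,\cdots)\}^{\zeta}\le e^{-\zeta r_s(w,y)}$ per indicator. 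Integrating out the $m$ Mecke locations produces $g_s(y)^m$; the leading term ($m=5$) contributes $C_p M_{s,p}(y)^{4+p/2}g_s(y)^5$, while diagonal terms with $m<5$ and the $\mu$-contributions are each dominated by $C_p M_{s,p}(y)^{4+p/2}(1+g_s(y)^5)$.

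The main obstacle is identifying the clean H\"older configuration that pins down $\zeta$ to its specific value: the factor $5$ in $g_s^5$ arises because $3+p/2\le4$ rounds up to $4$ indicators from $T_y^4$ plus one more $J_x$ from the inner sum, while $\zeta$ is then forced by $1/\gamma+5\zeta=1$ with $\gamma$ chosen to match the $(4+p)$-th moment bound from (A2). The remaining work is the mechanical (but careful) bookkeeping of the diagonal terms in the $5$-fold Mecke expansion and of the at-most-one $\mu$-point, verifying that all such lower-order contributions fit into the $M_{s,p}(y)^{4+p/2}(1+g_s(y)^5)$ envelope.
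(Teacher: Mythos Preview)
Your proposal is correct and follows essentially the same route as the paper's proof: both use the decomposition from Lemma~\ref{lem:D}, the power-mean bound $V^{4+p/2}\le T_y^{4}\sum_x a_x^{4+p/2}J_x$ (which is exactly what underlies the paper's borrowed expansion $I_1+15I_2+25I_3+10I_4+I_5$ from \cite{LSY19}, the coefficients being the Stirling numbers $S(5,i)$ arising from your coincidence patterns), followed by the multivariate Mecke formula, H\"older, Lemma~\ref{lem:Dmom}, monotonicity of $R_s$, and (A3). Your H\"older configuration $1/\gamma+5\zeta=1$ pinning down $\zeta=p/(40+10p)$ is precisely what the paper obtains, except that there the exponents $(p-\varepsilon)/(i(4+p))$ with $\varepsilon=p/2$ are $i$-dependent and then bounded below at $i=5$ by $\zeta$, whereas you use the uniform exponent $\delta=1/\zeta$ with slack for $m<5$ (valid since the H\"older step is applied under the probability measure $\E$).
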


\begin{proof}
Let $\e:=p/2$. We argue as in \cite{LSY19}. For $\mu=0$, using
Lemma~\ref{lem:D} followed by Jensen's inequality,
\begin{align*}
	\E\left|D_{y} H_{s}(\sP_{s})\right|^{4+\e}
	&=\E\left|\xi_{s}(y, \sP_{s}+\delta_y)
	+\sum_{x \in \sP_{s}} D_{y} \xi_{s}(x, \sP_{s})\right|^{4+\e} \\
	&\leq 2^{3+\e} \E\left|\xi_{s}(y, \sP_{s}+\delta_y)\right|^{4+\e}+2^{3+\e}
	\E\left|\sum_{x \in \sP_{s}} D_{y} \xi_{s}(x,\sP_{s})\right|^{4+\e}.
\end{align*}
By (A2), the first summand is bounded by
$2^{3+\e}M_s(y)^{4+\eps}$. Following the argument in
\cite[Lemma~5.5]{LSY19}, the second summand can be bounded as
\begin{displaymath}
	2^{3+\e} \E\left|\sum_{x \in \sP_{s}} D_{y} \xi_{s}
	\left(x, \sP_{s}\right)\right|^{4+\e}
	\le 2^{3+\e} (I_1+ 15 I_2 + 25 I_3 + 10 I_4 + I_5),
\end{displaymath}
where for $i \in \{1,\dots,5\}$,
\begin{displaymath}
	I_{i}=\E \sum_{(x_{1},\dots,x_{i})\in \sP_{s}^{i,\neq}}
	\mathds{1}_{D_{y}\xi_{s}(x_{j}, \sP_{s})\neq 0, j=1,\dots,i}
	\big|D_{y} \xi_{s}(x_{1}, \sP_{s})\big|^{4+\e}.
\end{displaymath}
Here $\sP_{s}^{i, \neq}$ stands for the set of all $i$-tuples of distinct
points from $\sP_{s}$, where multiple points at the same
location are considered to be different ones. Applying the
multivariate Mecke formula in the first equation, H\"older's
inequality followed by Lemma~\ref{lem:Dmom} in the second step and
Lemma~\ref{lem:Dnull} and (A1.2) in the third step, we obtain for $1 \le i \le 5$,
\begin{align*}
	I_i &=s^{i} \int_{\XX^{i}}\E\Big[\mathds{1}_{D_{y}
		\xi_{s}(x_{j}, \sP_{s}+\delta_{x_1}+\cdots+\delta_{x_i})\neq 0, j=1,
		\dots, i} 
	\big|D_y \xi_s (x_1, \sP_s+\delta_{x_1}+\cdots+\delta_{x_i})\big|^{4+\e} \Big] \QQ^{i}(\ldiff(x_{1},\dots,x_{i}))\\
	&\le s^{i} \int_{\XX^{i}} (2M_s(x_1))^{4+\e} 
	\prod_{j=1}^{i} \Prob{D_y \xi_s (x_j, \sP_s+\delta_{x_1}+\cdots
		+\delta_{x_i})\neq0}^{\frac{p-\e}{4 i+p i}}\;\QQ^{i}(\ldiff(x_{1},\dots,x_{i}))\\
	& \le 2^{4+\e} s^{i} \int_{\XX^{i}} M_s(x_1)^{4+\e} 
	\prod_{j=1}^{i} \Prob{y \in R_s(x_j,\sP_s+\delta_{x_j})}^{\frac{p-\e}{4 i+p i}}
	\;\QQ^{i}(\ldiff(x_{1},\dots,x_{i})).
\end{align*}
By \eqref{eq:Rs},
\begin{align*}
	&2^{-4-\e}I_i 
	\le \, s^{i} \int_{\XX^{i}} M_s(x_1)^{4+\e}
	\prod_{j=1}^{i} \exp\Big\{-\frac{p-\e}{4 i+p i}
	r_{s}(x_j,y)\Big\} \QQ^{i}(\ldiff(x_{1},\dots,x_{i}))\\
	& \,= \left(s\int_{\XX} \exp \Big\{-\frac{p-\e}{4
		i+p i} r_{s}(x, y)\Big\} \QQ(\ldiff x)\right)^{i-1}  \left(s\int_{\XX} M_s(x)^{4+\e} \exp \Big\{-\frac{p-\e}{4
		i+p i} r_{s}(x, y)\Big\} \QQ(\ldiff x)\right)\\
	&\, \le \left(s\int_{\XX} \exp
	\Big\{-\frac{p}{40+10 p} r_{s}(x, y)\Big\} 
	\QQ(\ldiff x)\right)^{i-1}  \left(s\int_{\XX} M_s(x)^{4+\e} \exp \Big\{-\frac{p}{40+10 p} r_{s}(x, y)\Big\} \QQ(\ldiff x)\right) \\
	&\,\le g_{s}(y)^{i-1} h_s(y),
\end{align*}
where $g_{s}$ and $h_s$ are defined at \eqref{eq:g}. Since
$g_{s}^{i-1} \le 1+g_{s}^4$ for all $i=1,\dots,5$, this proves the
result for $\mu=0$. If $\mu(\XX)=1$, the proof is similar,
see the proof of \cite[Lemma~5.5]{LSY19} for details.
\end{proof}

\begin{lemma}\label{lem:inner}
Assume that (A1) holds. For any $\beta>0$,
$s \ge 1$ and $x_2 \in \XX$,
\begin{displaymath}
	s \int_{\XX} G_s(x_1)\Prob{D_{x_1,x_2}^{2} H_{s}(\sP_{s})
		\neq 0}^{\beta}\QQ(\ldiff x_{1}) \leq 3^\beta f_\beta(x_2)
\end{displaymath}
with $f_\beta$ defined at \eqref{eq:fa}.
\end{lemma}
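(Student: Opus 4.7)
The plan is to bound $\Prob{D^2_{x_1,x_2}H_s(\sP_s)\neq0}$ by three terms coming from the three-part expansion in Lemma~\ref{lem:D}, estimate each via stabilization, and recognize the resulting integrals as $f_\beta^{(1)},f_\beta^{(2)},f_\beta^{(3)}$.

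First, I would apply Lemma~\ref{lem:D} with $\M=\sP_s$ to write
$$D_{x_1,x_2}^{2}H_s(\sP_s)=D_{x_1}\xi_s(x_2,\sP_s+\delta_{x_2})+D_{x_2}\xi_s(x_1,\sP_s+\delta_{x_1})+\sum_{x\in\sP_s}D_{x_1,x_2}^{2}\xi_s(x,\sP_s).$$
A union bound then gives
$$\Prob{D_{x_1,x_2}^{2}H_s(\sP_s)\neq0}\le A_1+A_2+A_3,$$
where $A_1,A_2$ are the probabilities that the first two summands are nonzero and $A_3$ is the probability that the sum is nonzero.

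Next, by Lemma~\ref{lem:Dnull}, the first summand vanishes unless $x_1\in R_s(x_2,\sP_s+\delta_{x_2})$; so by (A3),
$$A_1\le\P\{x_1\in R_s(x_2,\sP_s+\delta_{x_2})\}\le e^{-r_s(x_2,x_1)},$$
and analogously $A_3\le e^{-r_s(x_1,x_2)}$. For the summation term, Lemma~\ref{lem:Dnull} says that $D_{x_1,x_2}^{2}\xi_s(x,\sP_s)=0$ whenever $\{x_1,x_2\}\not\subseteq R_s(x,\sP_s)$. Thus, using Markov's inequality followed by the Mecke formula,
$$A_3\le\E\sum_{x\in\sP_s}\mathds{1}\{\{x_1,x_2\}\subseteq R_s(x,\sP_s)\}=s\int_{\XX}\P\{\{x_1,x_2\}\subseteq R_s(z,\sP_s+\delta_z)\}\,\QQ(\ldiff z)=q_s(x_1,x_2),$$
which is exactly the definition \eqref{eq:g2s}.

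Combining, $\Prob{D_{x_1,x_2}^{2}H_s(\sP_s)\neq0}\le e^{-r_s(x_2,x_1)}+e^{-r_s(x_1,x_2)}+q_s(x_1,x_2)$. Raising to the $\beta$-th power and using the elementary inequality $(a+b+c)^\beta\le 3^\beta(a^\beta+b^\beta+c^\beta)$ valid for all $\beta>0$ and $a,b,c\ge0$ (by concavity when $\beta\le1$, and by Jensen when $\beta>1$), yields
$$\Prob{D_{x_1,x_2}^{2}H_s(\sP_s)\neq0}^\beta\le3^\beta\bigl(e^{-\beta r_s(x_2,x_1)}+e^{-\beta r_s(x_1,x_2)}+q_s(x_1,x_2)^\beta\bigr).$$
Multiplying by $G_s(x_1)$ and integrating with respect to $s\,\QQ(\ldiff x_1)$, the three summands produce precisely $f_\beta^{(2)}(x_2)$, $f_\beta^{(1)}(x_2)$ and $f_\beta^{(3)}(x_2)$ respectively, and hence the total is $3^\beta f_\beta(x_2)$ by \eqref{eq:fa}. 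The only subtle point is the Mecke step giving $q_s$, where one must remember that $R_s$ is evaluated on a configuration that contains the base point, matching the convention used in \eqref{eq:g2s}; otherwise the calculation is a direct concatenation of the cited lemmas.
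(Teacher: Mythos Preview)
Your proof is correct and follows essentially the same route as the paper's: decompose via Lemma~\ref{lem:D}, bound the first two pieces using Lemma~\ref{lem:Dnull} and (A3), bound the sum term via Mecke to obtain $q_s$, then use $(a+b+c)^\beta\le 3^\beta(a^\beta+b^\beta+c^\beta)$ and identify the three integrals with $f_\beta^{(1)},f_\beta^{(2)},f_\beta^{(3)}$. The only slip is a labeling typo: where you wrote ``analogously $A_3\le e^{-r_s(x_1,x_2)}$'' you meant $A_2$.
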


\begin{proof}
As in the proof of \cite[Lemma~5.9(a)]{LSY19}, by Lemma~\ref{lem:D}
and the Mecke formula, one has
\begin{equation}
	\label{eq:2Dsum}
	\Prob{D_{x_1,x_2}^{2} H_{s}(\sP_{s}) \neq 0} 
	\le \Prob{D_{x_1} \xi_{s}(x_2,\sP_{s}+\delta_{x_2})\neq 0}
	+\Prob{D_{x_2} \xi_{s}(x_1,\sP_{s}+\delta_{x_1}) \neq 0} + T_{x_1,x_2,s},
\end{equation}
where 
\begin{displaymath}
	T_{x_1,x_2,s}:= s \int_{\XX} \Prob{D_{x_1,x_2}^{2}
		\xi_{s}(z,\sP_{s}+\delta_z) \neq 0} \QQ(\ldiff z).
\end{displaymath}
By Lemma~\ref{lem:Dnull} and \eqref{eq:Rs}, the first two summands
on the right-hand side of \eqref{eq:2Dsum} are bounded by
$e^{- r_{s}(x_2,x_1)}$ and $e^{- r_{s}(x_1,x_2)}$,
respectively. Furthermore, by Lemma~\ref{lem:Dnull} and
\eqref{eq:g2s},
\begin{displaymath}
	T_{x_1,x_2,s} \le s \int_{\XX} \Prob{\{x_1,x_2\}
		\subseteq R_s(z,\sP_s+\delta_z)} \QQ(\ldiff z)
	= q_{s}(x_1,x_2).
\end{displaymath}
By \eqref{eq:fal},
\begin{align*}
	& s  \int_{\XX}  G_s(x_1)\Prob{D_{x_1,x_2}^{2}H_{s}(\sP_{s})\neq
		0}^{\beta}\QQ(\ldiff x_{1})\\
	&\le 3^\beta \int_{\XX}  G_s(x_1) \left[ e^{- \beta
		r_{s}(x_2,x_1)} +e^{- \beta r_{s}(x_1,x_2)}
	+ q_{s}(x_1,x_2)^\beta\right] \QQ(\ldiff x_{1}) = 3^\beta f_\beta(x_2). \qedhere
\end{align*}
\end{proof}

Recall the function $\kappa_s(x)$ in \eqref{eq:p}.

\begin{lemma}\label{lem:bounds}
Assume that (A1) holds, and let $\beta >0$. Then for all $s
\ge 1$,
\begin{gather*}
	s \int_{\XX}\left(s \int_{\XX} G_s(x_1)\Prob{ D_{x_1,x_2}^{2} H_{s}
		(\sP_{s}) \neq 0}^{\beta} \QQ(\ldiff x_{1})\right)^{2} 
	\QQ(\ldiff x_{2}) \le s3^{2\beta} \QQ f_\beta^2,\\
	s^{2} \int_{\XX^{2}} G_s(x_1)\Prob{ D_{x_1,x_2}^{2}
		H_{s}(\sP_{s})\neq 0}^{\beta} \QQ^{2}(\ldiff (x_{1}, x_{2}))
	\le s 3^{\beta} \QQ f_\beta,\\
	s \int_{\XX} G_s(x)\Prob{ D_{x} H_{s}(\sP_{s})\neq 0}^{\beta}
	\QQ(\ldiff x) \le s \QQ ((\kappa_s+g_{s})^\beta G_s).
\end{gather*}
\end{lemma}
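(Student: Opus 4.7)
The plan is to derive the three inequalities from Lemma~\ref{lem:inner} and the basic identities for difference operators, without any new estimates of stabilization.

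For the first inequality, I apply Lemma~\ref{lem:inner} pointwise in $x_2$: the inner integral $s\int_\XX G_s(x_1)\Prob{D_{x_1,x_2}^2 H_s(\sP_s)\neq 0}^\beta \QQ(\ldiff x_1)$ is bounded by $3^\beta f_\beta(x_2)$. Squaring and integrating against $s\QQ$ yields $s\,3^{2\beta}\QQ f_\beta^2$. For the second inequality, I use Fubini's theorem to rewrite the double integral as $s\int_\XX\big(s\int_\XX G_s(x_1)\Prob{D_{x_1,x_2}^2 H_s(\sP_s)\neq 0}^\beta \QQ(\ldiff x_1)\big)\QQ(\ldiff x_2)$, apply Lemma~\ref{lem:inner} again to the inner integral, and obtain $s\,3^\beta\QQ f_\beta$.

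The third inequality requires a bit of genuine work, namely a bound on $\Prob{D_x H_s(\sP_s)\neq 0}$. By Lemma~\ref{lem:D},
\begin{displaymath}
  D_x H_s(\sP_s) = \xi_s(x,\sP_s+\delta_x) + \sum_{y\in\sP_s} D_x \xi_s(y,\sP_s),
\end{displaymath}
so the event $\{D_x H_s(\sP_s)\neq 0\}$ is contained in the union of $\{\xi_s(x,\sP_s+\delta_x)\neq 0\}$ and $\{\exists\, y\in\sP_s:\ D_x\xi_s(y,\sP_s)\neq 0\}$. The probability of the first event is $\kappa_s(x)$ by definition. For the second, Lemma~\ref{lem:Dnull} ensures $D_x\xi_s(y,\sP_s)\neq 0$ forces $x\in R_s(y,\sP_s)$, hence by Markov's inequality and the Mecke formula,
\begin{displaymath}
  \Prob{\exists\, y\in\sP_s:\ x\in R_s(y,\sP_s)}
  \leq \E\sum_{y\in\sP_s}\mathds{1}_{x\in R_s(y,\sP_s)}
  = s\int_\XX \Prob{x\in R_s(y,\sP_s+\delta_y)}\QQ(\ldiff y).
\end{displaymath}
Applying (A3) gives the estimate $s\int_\XX e^{-r_s(y,x)}\QQ(\ldiff y)$, and since $\zeta=p/(40+10p)\leq 1/50<1$, this is bounded by $g_s(x)$.

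Combining these two bounds gives $\Prob{D_x H_s(\sP_s)\neq 0}\leq \kappa_s(x)+g_s(x)$, whence $\Prob{D_x H_s(\sP_s)\neq 0}^\beta\leq (\kappa_s(x)+g_s(x))^\beta$. Multiplying by $G_s(x)$ and integrating against $s\QQ$ yields the third inequality. The main obstacle, if any, is purely notational: getting the order of arguments in $r_s$ correct in the Mecke step and verifying that $\zeta\leq 1$ so that the exponential $e^{-r_s}$ can be inflated to $e^{-\zeta r_s}$; once this is checked, the three inequalities follow from Lemma~\ref{lem:inner} together with the one-term Mecke bound above.
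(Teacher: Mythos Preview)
Your proof is correct and follows essentially the same route as the paper: the first two inequalities are immediate consequences of Lemma~\ref{lem:inner}, and for the third you use Lemma~\ref{lem:D}, the Mecke formula, Lemma~\ref{lem:Dnull}, and (A3) to bound $\Prob{D_x H_s(\sP_s)\neq 0}$ by $\kappa_s(x)+g_s(x)$, exactly as the paper does. Your explicit check that $\zeta\leq 1$ (so that $e^{-r_s}\leq e^{-\zeta r_s}$) and your care with the order of arguments in $r_s$ are both correct and make explicit what the paper leaves implicit.
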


\begin{proof}
The first two assertions follow directly from
Lemma~\ref{lem:inner}. For the last one, by Lemma~\ref{lem:D} and
the Mecke formula, we can write
\begin{align*}
	\Prob{D_{x} H_{s}(\sP_{s}) \neq 0} 
	\leq \Prob{\xi_{s}(x,\sP_{s}+\delta_x)\neq 0}
	+\E \sum_{z \in \sP_{s}} \mathds{1}_{D_{x} \xi_{s}(z,\sP_{s})\neq 0}\\
	= \kappa_s(x)+s \int_{\XX} \Prob{D_{x} \xi_{s}(z,\sP_{s}+\delta_z)\neq 0} 
	\QQ(\ldiff z) \le \kappa_s(x)+g_{s}(x),
\end{align*}
where we used Lemma~\ref{lem:Dnull}, \eqref{eq:Rs} and
\eqref{eq:g} in the final step. This yields the final assertion.
\end{proof}

\begin{proof}[{Proof of Theorem~\ref{thm:KolBd}:}]
In view of Lemma~\ref{lem:1MomD}, the condition in
Theorem~\ref{thm:Main} is satisfied with the exponent $4+p/2$ with
$c_y:= C_p \left[M_s(y)^{4+p/2} + h_s(y)(1+g_s(y)^4) \right]$ for $y \in \XX$. Hence,
\begin{align*}
	&\max\left\{c_{y}^{2/(4+p/2)},c_{y}^{4/(4+p/2)}\right\}\\
	&\le  C_p^{4/(4+p/2)} \left[\max\left\{M_s(y)^2,M_s(y)^4 \right\} +
	\max\{h_s(y)^{2/(4+p/2)}, h_s(y)^{4/(4+p/2)}\}\big(1+g_s(y)^4\big) \right]\\
	&= C_p^{4/(4+p/2)} G_s(y),
\end{align*}
where $G_s$ is defined at \eqref{eq:g5}. The result now
follows from Theorem~\ref{thm:Main} upon using
Lemma~\ref{lem:bounds}.
\end{proof}

\section*{Acknowledgements}
We would like to thank Larry Goldstein for pointing out the work \cite{EG81} to provide lower bounds, and Matthias Schulte for many helpful discussions that vastly improved the presentation of the paper. We are also grateful to Joe Yukich and Giovanni Peccati for
their helpful comments on the manuscript.

%\bibliography{pareto}

\begin{thebibliography}{10}

\bibitem{bai:dev:hwan:05}
Bai, Z. D., Devroye, L., Hwang, H. K. and Tsai, T. H.: 
\newblock Maxima in hypercubes.
\newblock {\em Random Struct. Algorithms}, \textbf{27}, (2005), 290--309.

\bibitem{BX06}
Barbour, A.~D. and Xia, A.:
\newblock Normal approximation for random sums.
\newblock {\em Adv. in Appl. Probab.}, \textbf{38}, (2006), 693--728.

\bibitem{Ba20}
Baryshnikov, Y.:
\newblock Supporting-points processes and some of their applications.
\newblock {\em Probab. Theory Related Fields}, \textbf{117}, (2000), 163--182.

\bibitem{Bha21}
Bhattacharjee, C.:
\newblock Gaussian approximation in random minimal directed spanning trees.
\newblock {\em Random Struct. Algorithms}, \textbf{61}, (2022), 462--492.

\bibitem{bhat-mol21}
Bhattacharjee, C., Molchanov,  I. and Turin, R.:
\newblock Central limit theorem for birth-growth model with Poisson arrivals
and random growth speed.
\newblock {\em arXiv preprint arXiv:2107.06792}, (2021).

\bibitem{EG81}
Gunnar, E.:
\newblock A remainder term estimate for the normal approximation in classical occupancy.
\newblock {\em Ann. Probab.}, \textbf{9}, (1981), 684--692.

\bibitem{fil:naim20}
Fill, J.~A. and Naiman, D.~Q.:
\newblock The {P}areto record frontier.
\newblock {\em Electron. J. Probab.}, \textbf{25}, (2020).

\bibitem{iyer:thac12}
Iyer, S.~K. and Thacker, D.:
\newblock Nonuniform random geometric graphs with location-dependent radii.
\newblock {\em Ann. Appl. Probab.}, \textbf{22}, (2012), 2048--2066.

\bibitem{LSY19}
Lachi\`eze-Rey, R., Schulte, M. and Yukich, J.~E.:
\newblock Normal approximation for stabilizing functionals.
\newblock {\em Ann. Appl. Probab.}, \textbf{29}, (2019), 931--993.

\bibitem{LPS16}
Last, G., Peccati, G. and Schulte, M.:
\newblock Normal approximation on {P}oisson spaces: {M}ehler's formula, second
order {P}oincar\'{e} inequalities and stabilization.
\newblock {\em Probab. Theory Related Fields}, \textbf{165}, (2016), 667--723.

\bibitem{last:pen}
Last, G. and Penrose, M.:
\newblock {\em Lectures on the {Poisson} Process}.
\newblock Cambridge Univ. Press, Cambridge, 2018.

\bibitem{pen03}
Penrose, M.:
\newblock {\em Random Geometric Graphs}.
\newblock Oxford University Press, Oxford, 2003.

\bibitem{PY01}
Penrose, M.~D. and Yukich, J.~E.:
\newblock Central limit theorems for some graphs in computational geometry.
\newblock {\em Ann. Appl. Probab.}, \textbf{11}, (2001), 1005--1041.

\bibitem{pen:yuk03}
Penrose, M.~D. and Yukich, J.~E.:
\newblock Weak laws of large numbers in geometric probability.
\newblock {\em Ann. Appl. Probab.}, \textbf{13}, (2003), 277--303.

\bibitem{pen:yuk05}
Penrose, M.~D. and Yukich, J.~E.:
\newblock Normal approximation in geometric probability.
\newblock In {\em Stein's Method and Applications}, volume~5 of {\em Lect.
	Notes Ser. Inst. Math. Sci. Natl. Univ. Singap.}, 37--58. Singapore
Univ. Press, Singapore, 2005.

\bibitem{sch10}
Schreiber, T.:
\newblock Limit theorems in stochastic geometry.
\newblock In W.~S. Kendall and I.~Molchanov, editors, {\em New Perspectives in
	Stochastic Geometry}, pages 111--144. Oxford Univ. Press, Oxford, 2010.

\bibitem{Yu15}
Yukich, J.~E.:
\newblock Surface order scaling in stochastic geometry.
\newblock {\em Ann. Appl. Probab.}, \textbf{25}, (2015), 177--210.

\end{thebibliography}
%\bibliographystyle{abbrv}

\section*{Appendix : Proof of Theorem~\ref{thm:Main}}
\label{sec:modKol}

In this section, we prove Theorem~\ref{thm:Main}, which is a
slightly modified version of Theorem~6.1 in
\cite{LPS16}. Recall that $\sP$ is a Poisson process on a measurable
space $(\XX, \F)$ with a $\sigma$-finite intensity measure $\nu$ and
$F:=f(\sP)$ is a measurable function of $\sP$. For $x,y \in \XX$,
recall the definitions of the first and second order difference
operators $D_x F$ and $D_{x,y}^2 F$ and that of $\operatorname{dom} D$
from Section~\ref{sec:Proof}.

We are generally interested in the Gaussian approximation of such a
function $F$ with zero mean and unit variance with the aim to
bound the Wasserstein and the Kolmogorov distances between $F$ and a
standard normal random variable $N$. An important result in this
direction was given in \cite{LPS16}. Define
\begin{align*}
	\gamma_{1} &:=4\left[\int_{\XX^3} 
	\left[\E\left(D_{x_{1}} F\right)^{2}\left(D_{x_{2}}
	F\right)^{2}\right]^{1/2}
	\left[\E\left(D_{x_{1}, x_{3}}^{2} F\right)^{2}\left(D_{x_{2},
		x_{3}}^{2} F\right)^{2}\right]^{1/2} 
	\nu^{3}(\ldiff (x_{1}, x_{2}, x_{3}))\right]^{1 / 2}, \\
	\gamma_{2} &:=\left[\int_{\XX^3} 
	\E \left[\left(D_{x_{1}, x_{3}}^{2} F\right)^{2}\left(D_{x_{2},
		x_{3}}^{2} F\right)^{2} \right]
	\nu^{3}(\ldiff (x_{1}, x_{2}, x_{3}))\right]^{1 / 2}, \\
	\gamma_{3} &:=\int_\XX \E\left|D_{x} F\right|^{3} \nu(\ldiff x),\\
	\gamma_{4} &:=\frac{1}{2}\left[\E F^{4}\right]^{1 / 4} 
	\int_\XX \left[\E\left(D_{x} F\right)^{4}\right]^{3 / 4} \nu(\ldiff x), \\
	\gamma_{5} &:=\left[\int_\XX \E\left(D_{x} F\right)^{4} \nu(\ldiff x)\right]^{1 / 2}, \\
	\gamma_{6} &:=\left[\int_{\XX^2} \left(6\left[\E\left(D_{x_{1}}
	F\right)^{4}\right]^{1 / 2}
	\left[\E\left(D_{x_{1}, x_{2}}^{2} F\right)^{4}\right]^{1 / 2}
	+3 \E\left(D_{x_{1}, x_{2}}^{2} F\right)^{4} \right) 
	\nu^{2}(\ldiff(x_{1}, x_{2}))\right]^{1 / 2}.
\end{align*}

\begin{theorem*}[\cite{LPS16}, Theorems~1.1 and 1.2] \label{thm:Schulte}
	For $F \in \operatorname{dom} D$ having zero mean and unit variance,
	\begin{displaymath}
	d_{W}(F, N) \leq \gamma_{1}+\gamma_{2}+\gamma_{3},
	\end{displaymath}
	and
	\begin{displaymath}
	d_{K}(F, N) \leq \gamma_{1}+\gamma_{2}+\gamma_{3}+\gamma_{4}+\gamma_{5}+\gamma_{6}.
	\end{displaymath}
\end{theorem*}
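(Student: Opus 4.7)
The plan is to prove these bounds via the Malliavin--Stein method on the Poisson configuration space, combining Stein's equation with Mecke--Malliavin integration by parts. For a test function $h$ (Lipschitz for $d_W$, the indicator $\mathds{1}_{(-\infty,t]}$ for $d_K$), introduce the Stein solution $f=f_h$ of the differential equation $f'(x)-xf(x)=h(x)-\E h(N)$, so that
\[
d_W(F,N)\le \sup_{h\in \operatorname{Lip}_1}\bigl|\E[f_h'(F)-Ff_h(F)]\bigr|,
\]
and analogously for $d_K$. Classical Stein estimates yield $\|f_h\|_\infty,\|f_h'\|_\infty,\|f_h''\|_\infty \le C$ when $h$ is Lipschitz, whereas for the Kolmogorov case one must first smooth the indicator, as discussed below.

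The central step is the Malliavin integration by parts identity for a centered Poisson functional,
\[
\E[Ff(F)]=\E\!\left[\int_{\XX}D_xf(F)\cdot(-D_xL^{-1}F)\,\nu(\ldiff x)\right],
\]
where $L$ is the Ornstein--Uhlenbeck generator on Poisson space and $L^{-1}$ its pseudo--inverse satisfying $LL^{-1}F=F-\E F$. Applying a discrete first--order Taylor expansion $D_xf(F)=f'(F)D_xF+R_x$ with $|R_x|\le\tfrac12\|f''\|_\infty(D_xF)^2$, and using the normalization $\E\int_{\XX}D_xF\cdot(-D_xL^{-1}F)\,\nu(\ldiff x)=\E F^2=1$, one rewrites
\[
\E[f'(F)-Ff(F)]=\operatorname{Cov}\!\Bigl(f'(F),\,1-\!\int_{\XX}\! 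D_xF\,(-D_xL^{-1}F)\,\nu(\ldiff x)\Bigr)-\E\!\int_{\XX}\! R_x\,(-D_xL^{-1}F)\,\nu(\ldiff x).
\]
The remainder integral is controlled by $\tfrac12\|f''\|_\infty\gamma_3$ via Mehler's formula (which expresses $D_xL^{-1}F$ as an exponentially weighted integral of $D_xF$ evaluated on thinned configurations of $\sP$) together with Cauchy--Schwarz. The covariance is bounded by $\|f'\|_\infty$ times $\sqrt{\Var\bigl(\int D_xF\,(-D_xL^{-1}F)\,\nu(\ldiff x)\bigr)}$.

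A second--order Poincaré--type estimate now controls this variance: applying the standard Poincaré inequality $\Var H\le\int_{\XX}\E(D_yH)^2\,\nu(\ldiff y)$ to $H:=\int_{\XX}D_xF\cdot(-D_xL^{-1}F)\,\nu(\ldiff x)$, expanding $D_yH$ via the product rule, and using Mehler's formula a second time, produces products of two second differences $D^2_{x,y}F$ and yields
\[
\Var(H)\le \tfrac{1}{16}\gamma_1^2+\gamma_2^2,
\]
where $\gamma_1^2$ comes from the off--diagonal contribution (in which $D_y$ acts on only the $D_xL^{-1}F$ factor, coupling the integrand to a second difference at the common index $y=x_3$) and $\gamma_2^2$ from the diagonal contribution (in which $D_y$ differentiates both factors). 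Inserting these estimates, using $\sqrt{a^2+b^2}\le a+b$, and taking the supremum over Lipschitz $h$ yields $d_W(F,N)\le \gamma_1+\gamma_2+\gamma_3$.

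The hard part will be the Kolmogorov bound, because for $h=\mathds{1}_{(-\infty,t]}$ the Stein solution satisfies $\|f_h''\|_\infty=\infty$, so the use of $\|f''\|_\infty$ above breaks down. Following Schulte's approach, I would replace $h$ by a $C^1$ surrogate $h_\eta$ of smoothing parameter $\eta>0$, incurring an error $|\E h(F)-\E h_\eta(F)|\le C\eta$ (via the standard normal density bound) at the cost of $\|f_{h_\eta}''\|_\infty\le C/\eta$. Running the Wasserstein--style argument with $h_\eta$ and balancing the smoothing error against the $1/\eta$ blow--up in the Stein estimate produces, after optimizing $\eta$, three additional correction terms involving $L^4$ norms of $D_xF$ and $D^2_{x,y}F$, namely $\gamma_4,\gamma_5,\gamma_6$; in particular, $\gamma_4$ arises from the $(\E F^4)^{1/4}$ factor that surfaces when one bounds the smoothing error against the fourth--moment of $F$. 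The delicate technical point is to iterate the Malliavin integration by parts one more time (a ``second Stein step'') and to choose the smoothing together with the Hölder exponents so that the additional terms appear in the clean combination $\gamma_4+\gamma_5+\gamma_6$ without spurious logarithmic losses.
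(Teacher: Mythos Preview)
The paper does not prove this statement at all: it is quoted verbatim as Theorems~1.1 and~1.2 of \cite{LPS16} and used as a black box in the Appendix proof of Theorem~\ref{thm:Main}. So there is no ``paper's own proof'' to compare against; you are sketching a proof of an external result.

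That said, on the substance of your sketch: the Wasserstein half is broadly the right architecture (Stein equation, Malliavin--Mecke integration by parts, Mehler representation of $-D_xL^{-1}F$, Poincar\'e on the carr\'e-du-champ term to produce $\gamma_1,\gamma_2$, remainder giving $\gamma_3$), and this is indeed how \cite{LPS16} proceeds. Your Kolmogorov half, however, is not what \cite{LPS16} does and would not land on the stated $\gamma_4,\gamma_5,\gamma_6$. Last--Peccati--Schulte do \emph{not} smooth the indicator and optimize over a parameter~$\eta$; they work directly with the (non-smooth) Stein solution $f_t$ for $\mathds{1}_{(-\infty,t]}$, using that $\|f_t\|_\infty$ and $\|f_t'\|_\infty$ are bounded while $f_t'$ has a single jump at $t$. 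The extra terms arise from a refined treatment of the increment $D_x f_t(F)=f_t(F+D_xF)-f_t(F)$: one writes it as $f_t'(F)D_xF$ plus a remainder controlled by $(D_xF)^2$ and an indicator-type term capturing the jump, and then applies integration by parts once more together with Cauchy--Schwarz. This is what produces the specific shapes of $\gamma_5$ (the $L^4$ integral of $D_xF$), $\gamma_6$ (the mixed $D_xF$/$D^2_{x_1,x_2}F$ fourth-moment integral), and $\gamma_4$ (where the factor $[\E F^4]^{1/4}$ comes from a H\"older step against $F$ itself, not from a smoothing-error bound). A smoothing-and-balance argument of the kind you describe typically yields bounds with different exponents and an extraneous loss, and in particular will not recover the clean additive form $\gamma_4+\gamma_5+\gamma_6$ with these precise integrands.
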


Under additional assumptions on the difference operator, one can
simplify the bound. This is done in \cite[Theorem~6.1]{LPS16},
assuming that, for some $q>0$, the $(4+q)$-th moment of the difference
operator $D_{x} F(\sP + \mu)$ for $\mu\in\Nb$ with total mass at most
one is uniformly bounded in $x \in \XX$. However, in some
applications, as is the case in the example of minimal points
discussed in Section~\ref{sec:Pareto}, such a uniform bound does not
exist. In Theorem~\ref{thm:Main}, we modify \cite[Theorem~6.1]{LPS16}
to allow for a non-uniform bound depending on $x$. Below, we present
the proof of Theorem~\ref{thm:Main} for completeness, though the
arguments remain largely similar to those in the proof of Theorem~6.1
in \cite{LPS16}, with the main difference being the presence of a
spatially inhomogeneous moment bound given by the function $c_x$.

\begin{proof}[Proof of Theorem~\ref{thm:Main}]
	By our assumption, H\"older's inequality yields
	that
	\begin{displaymath}
		\E\left(D_{x} F\right)^{4} \leq 
		\left[\E\left|D_{x} F\right|^{4+q}\right]^{4 /\left(4+q\right)} \Prob{D_{x} F \neq 0}^{q
			/\left(4+q\right)}
		\leq c_{x}^{4 /\left(4+q\right)} \Prob{D_{x} F \neq 0}^{q /\left(4+q\right)}
	\end{displaymath}
	and
	\begin{displaymath}
		\E\left|D_{x} F\right|^{3} \leq c_{x}^{3 /\left(4+p\right)} 
		\Prob{D_{x} F \neq 0}^{\left(1+q\right) /\left(4+q\right)}.
	\end{displaymath}
	Also, using H\"older's inequality as above and Jensen's inequality
	in the second step, we have
	\begin{align*}
		\E\left(D_{x_{1}, x_{2}}^{2} F\right)^{4} 
		& \leq \left[\E\left|D_{x_{1}, x_{2}}^{2} F\right|^{4+q}\right]^{4 /\left(4+q\right)} \Prob{D_{x_{1}, x_{2}}^{2} F \neq 0}^{q /\left(4+q\right)}\\
		& \leq 16 \min\{c_{x_1},c_{x_2}\}^{4 /\left(4+q\right)}
		\Prob{D_{x_{1}, x_{2}}^{2} F \neq 0}^{q /\left(4+q\right)}.
	\end{align*}
	Thus, evaluating $(\gamma_i)_{1\le i \le 6}$ for
	$(F-\E F)/\sqrt{\Var F}$, we obtain
	\begin{align*}
		\gamma_{1} &\leq \frac{8}{\Var F}\Bigg[\int_{\XX^3}  
		c_{x_1}^{2 /\left(4+q\right)} c_{x_2}^{2 /\left(4+q\right)} 
		\Prob{D_{x_{1}, x_{3}}^{2} F \neq 0}^{q /\left(16+4q\right)}\\
		&\qquad \qquad\qquad\qquad\qquad\qquad \;\quad \times \Prob{D_{x_{2}, x_{3}}^{2} F \neq 0}^{q /\left(16+4
			q\right)} 
		\nu^{3}(\ldiff (x_{1}, x_{2}, x_{3}))\Bigg]^{1 / 2} \\
		&=\frac{8}{\Var F}\left[\int_\XX \left(\int_\XX c_{x_1}^{2
			/\left(4+q\right)} 
		\Prob{D_{x_{1}, x_{2}}^{2} F \neq 0}^{q /\left(16+4 q\right)}
		\nu(\ldiff x_{1})\right)^2 
		\nu(\ldiff x_{2})\right]^{1 / 2},\\
		\gamma_{2} &\leq \frac{4}{\Var F}\Bigg[\int_{\XX^3} 
		c_{x_1}^{2 /\left(4+q\right)}c_{x_2}^{2 /\left(4+q\right)} 
		\Prob{D_{x_{1}, x_{3}}^{2} F \neq 0}^{q /\left(8+2
			q\right)} \\
		&\qquad \qquad\qquad\qquad\qquad\qquad \;\quad \times \Prob{D_{x_{2}, x_{3}}^{2} F \neq 0}^{q /\left(8+2
			q\right)}\nu^{3}(\ldiff(x_{1}, x_{2}, x_{3}))\Bigg]^{1 / 2}\\
		&\le \frac{4}{\Var F}\left[\int_\XX \left(\int_\XX c_{x_1}^{2
			/\left(4+q\right)} 
		\Prob{D_{x_{1}, x_{2}}^{2} F \neq 0}^{q /\left(16+4 q\right)} 
		\nu(\ldiff x_{1})\right)^2 \nu(\ldiff x_{2})\right]^{1 / 2},\\
		\gamma_{3} &\leq \frac{1}{(\Var F)^{3 / 2}} 
		\int_\XX c_{x}^{3 /\left(4+q\right)} 
		\Prob{D_{x} F \neq 0}^{\left(1+q\right) /\left(4+q\right)} 
		\nu(\ldiff x) \le \frac{\Gamma_{F}}{(\Var F)^{3 / 2}},\\
		\gamma_{4} &\leq \frac{1}{2(\Var F)^{2}}\left[\E(F-\E
		F)^{4}\right]^{1 / 4} 
		\int_\XX c_{x}^{3 /\left(4+q\right)} \Prob{D_{x} F \neq 0}^{q
			/\left(8+2 q\right)} \nu(\ldiff x)\\
		&\leq \frac{\Gamma_F}{2(\Var F)^{2}}\left[\E(F-\E F)^{4}\right]^{1 / 4},\\
		\gamma_{5} &\leq \frac{1}{\Var F}\left[\int_\XX c_{x}^{4
			/\left(4+q\right)} 
		\Prob{D_{x} F \neq 0}^{q /\left(4+q\right)} \nu(\ldiff
		x)\right]^{1 / 2} 
		\le \frac{\Gamma_{F}^{1/2}}{\Var F},\\
		\gamma_{6} &\leq \frac{2\sqrt{6}}{\Var F}
		\left[\int_{\XX^2} c_{x_1}^{4 /\left(4+q\right)} 
		\Prob{D_{x_{1}, x_{2}}^{2} F \neq 0}^{q /\left(8+2 q\right)} 
		\nu^{2}(\ldiff (x_{1}, x_{2}))\right]^{1 / 2} \\
		& \qquad\qquad\qquad\qquad +\frac{4\sqrt{3}}{\Var F}
		\left[\int_{\XX^2} c_{x_1}^{4 /\left(4+q\right)} 
		\Prob{D_{x_{1}, x_{2}}^{2} F \neq 0}^{q /\left(4+q\right)} 
		\nu^{2}(\ldiff(x_{1}, x_{2}))\right]^{1 / 2}\\
		&\le \frac{2\sqrt{6}+4\sqrt{3}}{\Var F}
		\left[\int_{\XX^2} c_{x_1}^{4 /\left(4+q\right)} 
		\Prob{D_{x_{1}, x_{2}}^{2} F \neq 0}^{q /\left(8+2 q\right)}
		\nu^{2}(\ldiff(x_{1}, x_{2}))\right]^{1 / 2}.
	\end{align*}
	Finally, by \cite[Lemma~4.3]{LPS16},
	\begin{align*}
		&\frac{\E(F-\E F)^{4}}{(\Var F)^{2}}\\
		&\leq \max \left\{\frac{256}{(\Var F)^2}
		\left[\int_\XX \left[\E\left(D_{x} F\right)^{4}\right]^{1 / 2} 
		\nu(\ldiff x)\right]^{2}, \frac{4}{(\Var F)^2} 
		\int_\XX \E\left(D_{x} F\right)^{4} \nu(\ldiff x)+2\right\}\\
		&\leq \max \left\{256  \Gamma_{F}^{2} /(\Var F)^{2}, 4  \Gamma_{F} /(\Var F)^{2}+2\right\},
	\end{align*}
	so that
	\begin{displaymath}
		\gamma_{4} \leq \frac{1}{(\Var F)^{3 / 2}} \Gamma_{F}
		+\frac{1}{(\Var F)^{2}} \Gamma_{F}^{5 / 4}+\frac{2}{(\Var F)^{2}} \Gamma_{F}^{3 / 2}.
	\end{displaymath}
	An application of \cite[Theorems~1.1 and 1.2]{LPS16} yields the results.
\end{proof}

\end{document}